\newtheorem{definition}{Definition}
\newtheorem{thm}{Theorem}
\newtheorem{prop}{Proposition}
\newtheorem{rem}{Remark}
\newtheorem{lem}{Lemma}
\newtheorem{cor}{Corollary}
\definecolor{myGreen}{rgb}{0.0, 0.0, 0.0}
\newcommand{\D}{{\color{myGreen} \bar D}}
\newcommand{\green}[1]{\textcolor{myGreen}{#1}}
\newcommand{\mix}{\text{\normalfont mix}}
\newcommand{\IT}{\text{\normalfont i}}
\newcommand{\FTD}{\text{\normalfont f}}
\newcommand{\tv}[1]{\lVert #1 \rVert_{\text{\normalfont TV}}}
\begin{document}
\author{Arnaud Guillin\thanks{arnaud.guillin@uca.fr} \and 
	Leo Hahn\thanks{leo.hahn@unine.ch}\;\thanks{Corresponding author} \and 
	Manon Michel\thanks{manon.michel@uca.fr}}
\date{\today}

\title{Long-time analysis of a pair of on-lattice and continuous run-and-tumble particles with jamming interactions}

\maketitle

\begin{center}
	\small
	Laboratoire de Mathématiques Blaise Pascal UMR 6620, CNRS, Université Clermont Auvergne, Aubière, France.
\end{center}

\begin{abstract}
Run-and-Tumble Particles (RTPs) are a key model of active matter. They are characterized by alternating phases of linear travel and random direction reshuffling. By this dynamic behavior, they break time reversibility and energy conservation at the microscopic level. It leads to complex out-of-equilibrium phenomena such as collective motion, pattern formation, and motility-induced phase separation (MIPS). In this work, we study two fundamental dynamical models of a pair of RTPs with jamming interactions and provide a rigorous link between their discrete- and continuous-space descriptions. We demonstrate that as the lattice spacing vanishes, the discrete models converge to a continuous RTP model on the torus, described by a Piecewise Deterministic Markov Process (PDMP). This establishes that the invariant measures of the discrete models converge to that of the continuous model, which reveals finite mass at jamming configurations and exponential decay away from them. This indicates effective attraction, which is consistent with MIPS. Furthermore, we quantitatively explore the convergence towards the invariant measure. Such convergence study is critical for understanding and characterizing how MIPS emerges over time. Because RTP systems are non-reversible, usual methods may fail or are limited to qualitative results. Instead, we adopt a coupling approach to obtain more accurate, non-asymptotic bounds on mixing times. The findings thus provide deeper theoretical insights into the mixing times of these RTP systems, revealing the presence of both persistent and diffusive regimes.
\end{abstract}

\bigskip

\noindent{\bf Keywords:} run-and-tumble particles, piecewise
deterministic Markov processes, jamming, mixing time.
















	

\section[Introduction]{Introduction, models and main results}

%
%
%

\subsection{Introduction}

Run-and-tumble particles (RTPs), which randomly switch between phases
of linear motion (runs) through localized chaotic motion (tumbles)
resulting in reorientation, are a prime example of active
matter~\cite{ramaswamy10}. Their self-propulsion breaks time
reversibility and energy conservation at the microscopic scale, two
fundamental properties of equilibrium systems. This leads to rich
out-of-equilibrium behavior such as collective motion~\cite
{vicsek12}, pattern formation~\cite{budrene95} and motility induced
phase separation (MIPS)~\cite{cates15}. In particular, accumulation at
obstacles and MIPS both result in spatially inhomogeneous particle
distributions, that deviate from the homogeneous passive particle
ones. Thus they cannot a priori be framed into the equilibrium
description based on an already known stationary distribution and
their dynamics be summarized into a reversible random
walk. Coarse-grained analytical approaches allowed to retrieve some
observed mesoscopic behaviors, but are unfitted to trace back an
observed behavior to a particular microscopic origin. More generally,
there is still no systematic method for deriving directly from the
microscopic details the exact form of the stationary
distribution.

Some recent results were obtained for particular on-lattice models,
consisting of two RTPs interacting through
jamming~\cite{slowman16,slowman17}\textcolor{black}{, which translates
  into blocking configurations due to hardcore interactions and the
  persistence of RTP motion}. The continuous-space invariant measure,
formally obtained by making the lattice spacing go to zero, displays
increased mass at jamming configurations. This increased mass takes
the form of Diracs and, in the case of nonzero tumble
duration~\cite{slowman17}, exponentials, indicating effective
attraction in line with MIPS. The same discretization scheme is
revisited for models with collision-induced recoil~\cite
{metson22,metson23}, leading to either effective attraction or
repulsion depending on model parameters. {\color{black} In parallel,
  direct continuous-space models have been introduced to study the
  stationary bound state of $N$ interacting RTPs, though these either
  include thermal noise \cite{das20} or simply neglect jamming effects
  \cite{basu20,ledoussal21,touzo24}. 
  A key challenge in
  continuous space is that jamming interactions translate into
  boundary conditions on the state space, which prior approaches
  usually struggled to handle rigorously.  To address this, a
  framework based on piecewise deterministic Markov processes
  (PDMPs)~\cite{davis93} was recently proposed~\cite{hahn23}. This
  method allows direct modeling in continuous space, simplifies
  computations, and—crucially—enables a more tractable treatment of
  boundary conditions, even in the absence of thermal noise.  A clear
  understanding of these boundary effects has now been shown to be
  essential for computing invariant measures in such
  systems~\cite{hahn23}. Using the PDMP framework, the explicit
  invariant measure of two interacting RTPs with general tumbling and
  jamming behavior was computed, revealing the existence of two
  universality classes—distinguished by whether probability flow is
  preserved in a detailed or global sense at the boundaries.}  While
the invariant measures obtained with the PDMP approach in
\cite{hahn23} recover the continuous limit of the discrete processes
in \cite{slowman16,slowman17}, the proof of such convergence, as the
lattice spacing decreases, is still missing.

The computation of the invariant measures naturally leads to the
question of the speed of convergence towards it. In the context of
RTPs, this is linked to the time it takes for MIPS to emerge from an
arbitrary configuration. Indeed, while the invariant measure of Markov
processes determines their behavior at time scales of the order of
their mixing time, it becomes irrelevant at smaller time
scales.
{\color{black} While the related question of the first collision time of 2 RTPs~\cite{ledoussal19}, a form of first passage time~\cite{angelani14,malakar18,gueneau24}, has been studied, the speed of convergence toward the invariant measure is more rarely investigated.} The most commonly used approach is spectral
decomposition: the distance to the invariant measure decays
exponentially, with an asymptotic rate determined by the second
eigenvalue of the generator. The full spectral decomposition of two
on-lattice RTPs on the 1D torus with jamming is computed
in~\cite{mallmin19}, revealing eigenvalue crossings and a non-analytic
spectral gap. In continuous space with added thermal
noise~\cite{das20}, the generator is a differential operator so the
eigenvalue problem takes the form of an ODE.\@ The full spectrum is
again computed and the spectral gap is found to reduce to the spectral
gap of the velocity dynamics as the length of the 1D torus goes to
zero. This occurs because the time it takes to explore position space
goes to zero while the time to explore velocity space remains
constant. In~\cite{angelani19}, the eigenvalues are obtained as the
poles of the Laplace transform in time of the position distribution
and the same reduction phenomenon is observed.  Unfortunately, when
the generator is an unbounded operator, the set of eigenvalues can be
a proper subset of the spectrum, complicating the computation of the
spectral gap. Furthermore, because RTP systems are non-reversible,
there is an uncontrolled prefactor to the exponential decay and the
convergence results are only true in the asymptotic regime, leaving
open the question in the non-asymptotic case.

 In this work, we address these different convergence questions. The
 first result of this paper is that the continuous-space limit, where
 the lattice spacing tends to zero, of the Markov jump processes
 in~\cite{slowman16,slowman17} is a variant of the PDMP
 in~\cite{hahn23}, rigorously linking the two models. The convergence
 is made quantitative by relying on a coupling strategy rather than
 the usual generator approach~\cite{ethier86}. A uniformity result on
 the mixing times of the discrete-space processes is then used to
 prove that the on-lattice invariant measures converge to the
 continuous invariant measure for the Wasserstein distance, thus
 giving further theoretical underpinning to~\cite{slowman16,slowman17}.
 Secondly, regarding the long-time behavior, we adopt
 the coupling approach~\cite{fontbona12} to obtain sharp
 non-asymptotic bounds.  An order preservation argument, in the
 general spirit of~\cite{lund96}, reduces the coupling approach to
 hitting time computations and provides quantitative upper bounds on
 the mixing time. Matching lower bounds are established by identifying
 obstacles to mixing, which are then made quantitative by
 concentration inequalities. This shows that the bounds are optimal
 and fully capture the dependence of the mixing time on model
 parameters, incidentally revealing the existence of a persistent and
 diffusive regime similar to~\cite{das20,angelani19}.

In the remainder of this section, the discrete models from~\cite
{slowman16,slowman17}, as well as their continuous-space limit, are introduced
and the main results are presented. The section ends with a plan of the paper.

\subsection{Models and main results}
\label{sec:construction_of_the_stochastic_processes}

\subsubsection{Discrete process}

All processes considered in this paper aim to describe the collective
ballistic motion of a pairs of RTPs, that jam when they collide. We
define the Markov jump process introduced in \cite{slowman16,slowman17}.
The goal is to show that this discrete model converges
in law to particular instances of the general continuous-space stochastic
process considered in \cite{hahn23}.

The discrete process of interest corresponds to the setting of two
RTPs on a periodic one-dimensional lattice of $L+1$ sites, as
illustrated in Fig.~\ref{fig:rtp_illust}. Each particle is endowed
with a velocity and an independent Poisson clock with rate
$\gamma$. When their clock rings, the particles jump to the
neighboring position in the direction of their velocity except if that
position is occupied by the other particle. The particles thus display
the persistent motion characteristic of RTPs as well as jamming
interactions.

\begin{figure}[H]
	\centering
	\includegraphics[width=0.8\linewidth]{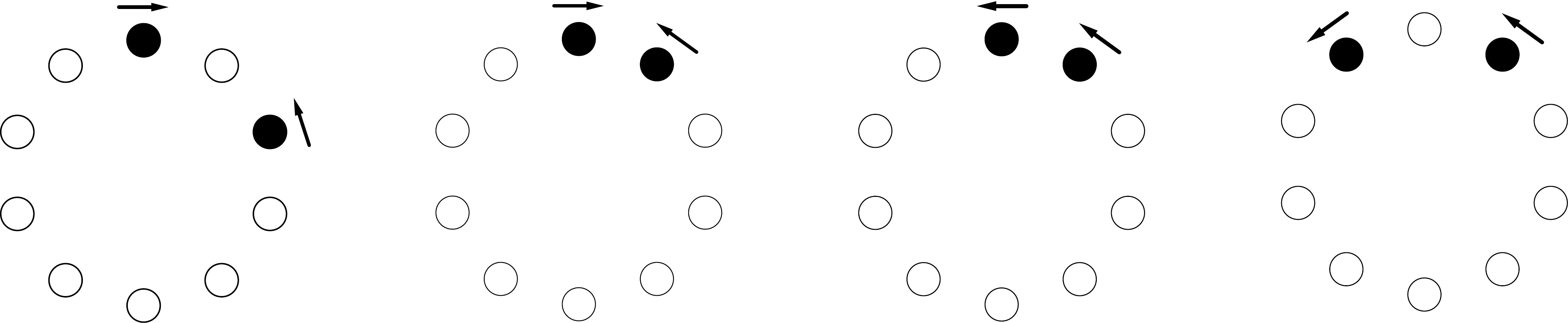}
	\caption{Sequence of random position jumps and velocity flips}
        \label{fig:rtp_illust}
\end{figure}

In \cite{slowman16}, the tumbles are approximated by instantaneous
reorientations and the velocities are independent Markov jump
processes taking the values $1$ and $-1$ and following the transition
rates of figure
\ref{fig:single_speed_transition_rates_slowman_2016}. In
\cite{slowman17}, the tumbles have a finite nonzero duration so that
the velocities can take the additional value $0$ (tumbling and not
moving) and follow the transition rates of
figure~\ref{fig:single_speed_transition_rates_slowman_2017}.

\begin{figure}[H]
	\centering
	\begin{subfigure}{0.3\textwidth}
		\centering
		\begin{tikzpicture}[->, node distance={2cm}, thick, main/.style = {draw, circle, minimum size=1cm}] 
		\node[main] (I+) {$+1$};
		\node[main] (I-) [right of=I+] {$-1$};
		
		\path[every node/.style={font=\sffamily\small}]
		(I+) edge[bend left] node [above] {$\omega$} (I-)
		(I-) edge[bend left] node [below] {$\omega$} (I+)
		;
		\end{tikzpicture}
		\caption{Instantaneous tumble}
		\label{fig:single_speed_transition_rates_slowman_2016}
	\end{subfigure}
	\begin{subfigure}{0.3\textwidth}
		\centering
		\begin{tikzpicture}[->, node distance={2cm}, thick, main/.style = {draw, circle, minimum size=1cm}] 
		\node[main] (F+) [right of=I-] {$+1$};
		\node[main] (F0) [right of=F+] {$0$};
		\node[main] (F-) [right of=F0] {$-1$};
		\path[every node/.style={font=\sffamily\small}]
		(F+) edge[bend left] node [above] {$\alpha$} (F0)
		(F0) edge[bend left] node [below] {$\beta/2$} (F+)
		(F0) edge[bend right] node [below] {$\beta/2$} (F-)
		(F-) edge[bend right] node [above] {$\alpha$} (F0)
		;
		\end{tikzpicture}
		\caption{Finite tumble}
		\label{fig:single_speed_transition_rates_slowman_2017}
	\end{subfigure}
	\caption{Transition rates of the velocity of a single particle}
	\label{fig:single_speed_transition_rates_slowman_2017_and_2017}
\end{figure}
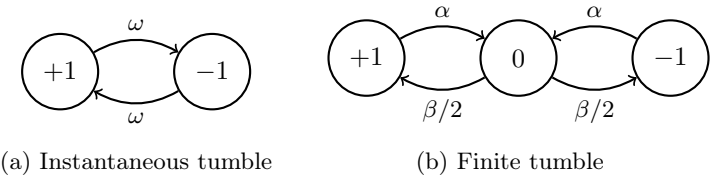

Because the process is translation invariant, it is enough to consider
the inter-particle separation $y^L \in \{ 1, \ldots, L \}$, defined as the difference from the first to
the second RTP position modulo $L+1$, and the velocities $\sigma^1$
and $\sigma^2$ of the two particles. The inter-particle separation $y^L$ stays
constant until the clock of, say, the first particle rings at time $t$, and, 
\begin{itemize}
\item if $y^L(t-) - \sigma^1(t-) \in \{ 1, \ldots, L\}$ then $y^L$ jumps to $y^L(t-) - \sigma^1(t-)$ (travel to free site),
\item if $y^L(t-) - \sigma^1(t-) \notin \{ 1, \ldots, L\}$ then nothing happens (jamming).
\end{itemize}
If the clock of the second particle rings, $y^L$ is updated in the same manner once $ - \sigma^1(t-)$ is replaced by $+ \sigma^2(t-)$.
        
Therefore the process
$Y^L(t) = (y^L(t), (\sigma^1(t), \sigma^2(t)))$ is a Markov jump process
and its construction is laid out in the following definition.

\begin{definition}[Discrete process] \label{def:discrete_process}

  Let $L$ be a positive integer,
  $y^L_0 \in \{ 1, \ldots, L \}$  and
  $\sigma^1_0, \sigma^2_0 \in \{-1, 1\}$ (resp.
  $\sigma^1_0, \sigma^2_0 \in \{-1, 0, 1\}$) . Let $\sigma^1$ and $\sigma^2$ be two
  independent Markov jump processes with the transition rates of
  figure \ref{fig:single_speed_transition_rates_slowman_2016}
  (resp. \ref{fig:single_speed_transition_rates_slowman_2017}) and respective initial states \( \sigma^1_0 \) and \( \sigma^2_0 \). Further let
  $N_1$ and $N_2$ be two independent Poisson clocks with rate $\gamma$
  independent of the $\sigma^i$. Recursively define $y^L(t)$ by $y^L(0) = y^L_0$ and,
\begin{itemize}
\item when $N_1$ rings at time $t$, $y^L(t)=\max(1,\min(L, y^L(t-) - \sigma^1(t-)))$.
\item when $N_2$ rings at time $t$, $y^L(t)=\max(1,\min(L, y^L(t-) + \sigma^2(t-)))$
\end{itemize} 
\end{definition}

To emphasize which $\sigma^i$, $N_i$, $y_0^L$ and $L$ were used, it is sometimes
useful to denote the process constructed in definition
\ref{def:discrete_process} by $\mathcal Y(\sigma^1, \sigma^2, N_1, N_2, y^L_0,
 L)$. The process $Y^L$ is referred to as {discrete instantaneous tumble
 process} (DITP) when the velocities follow figure
\ref{fig:single_speed_transition_rates_slowman_2016} and as {discrete finite
 tumble process} (DFTP) when the velocities follow figure
\ref{fig:single_speed_transition_rates_slowman_2017}.	

\subsubsection{Continuous process}\label{sec_construction}

We now present the continuous process, that we show is the
continuous-space limit of the discrete process under the appropriate
rescaling. \textcolor{myGreen}{This process can be described in two
  different manners, either an ad-hoc trajectory one or a more general
  generator one, as done in \cite{hahn23}. We detail now the more concise
  trajectory description and refer to Section~\ref{secdavis} for a
  complete PDMP construction that follows the original Davis construction
  \cite{davis93}. We then show the equivalency of both descriptions and later
  use one or the other approach depending on their respective advantage.}

Consider two point particles, each endowed with a velocity following a
Markov jump process with the transition rates of figure
\ref{fig:single_speed_transition_rates_slowman_2016}
(resp. \ref{fig:single_speed_transition_rates_slowman_2017}), according to which they
move around a 1D torus of length $\ell > 0$.
The particles jam when they collide and therefore they can never pass
through each other. Similarly to $y^L$ in the discrete case, we
consider the inter-particle separation $x \in [0, \ell]$, identifying
with the difference from the first to the second particle position
modulo $\ell$. Denote $\sigma^1$ and $\sigma^2$ the velocities of the
particles and $[t_1, t_2]$ a time interval on which the velocities are
constant.
\begin{itemize}
\item If $\sigma^2(t_1) - \sigma^1(t_1) > 0$ then $x$ increases at
  constant velocity $\sigma^2(t_1) - \sigma^1(t_1)$ until it reaches
  $\ell$ (jamming configuration) hence
$$
x(t) = \min \left[\ell, x(t_1) + (\sigma^2(t_1) - \sigma^1(t_1))(t -
  t_1) \right] \text{ for } t \in [t_1, t_2].
$$
\item If $\sigma^2(t_1) - \sigma^1(t_1) < 0$ then $x$ decreases at
  constant velocity $\sigma^2(t_1) - \sigma^1(t_1)$ until it reaches $0$
  (jamming configuration) hence
$$
x(t) = \max \left[0, x(t_1) + (\sigma^2(t_1) - \sigma^1(t_1))(t - t_1)\right]\text{ for } t \in [t_1, t_2].
$$
\item If $\sigma^2(t_1) - \sigma^1(t_1) = 0$ then $x$ is constant on
  $[t_1, t_2]$.
\end{itemize}

This leads to the following recursive construction for the continuous
process (see figure~\ref{fig:continuous_instanteneous_tumble_realization} for a realization).

\begin{definition} \label{def:continuous_process_min_max_construction}(Continuous
  process) Let $\ell>0$,  $x_0 \in [0, \ell]$ and
  $\sigma^1_0, \sigma^2_0 \in \{-1, 1\}$ (resp.
  $\sigma^1_0, \sigma^2_0 \in \{-1, 0, 1\}$) be given. Consider two
  independent Markov jump processes $\sigma^1$ and $\sigma^2$ with the
  transition rates of figure
  \ref{fig:single_speed_transition_rates_slowman_2016}
  (resp. \ref{fig:single_speed_transition_rates_slowman_2017}) and
  respective initial states $\sigma^1_0$ and $\sigma^2_0$.

  Using the jump times $(T_i)_{i \ge 0}$ of the couple
  $(\sigma^1, \sigma^2)$, recursively define $x(t)$ by $x(0) = x_0$
  and
\begin{equation*}
x(t) = \max \left[0,  \min \left[\ell, x(T_i) + (\sigma^2(T_i) - \sigma^1(T_i)) (t - T_i) \right] \right] \text{ for } t \in [T_i, T_{i+1}[.
\end{equation*}
The process $X(t) = (x(t), (\sigma^1(t), \sigma^2(t)))$ is referred to as {continuous process}.
\end{definition}

\begin{figure}[H]
	\centering
	\includegraphics[width=0.8\textwidth]{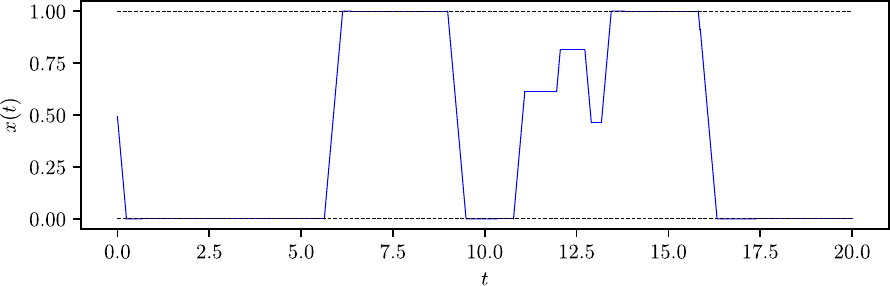}
	\caption{Realization of $x$ for the CITP with $\omega = 0.5$ and $\ell = 1$}\label{fig:continuous_instanteneous_tumble_realization}
\end{figure}

In order to emphasize which $\sigma^i$, $x_0$ and $\ell$ were used, the
process constructed in the previous definition is sometimes denoted by
$\mathcal X(\sigma^1, \sigma^2, x_0, \ell)$. When the $\sigma^i$ follow the
rates of figure
\ref{fig:single_speed_transition_rates_slowman_2016}, $X$ is called the
 continuous instantaneous tumble process (CITP) and when the $\sigma^i$
 follow the rates of figure
\ref{fig:single_speed_transition_rates_slowman_2017}, $X$ is called the
 continuous finite tumble process (CFTP).

Starting from definition
\ref{def:continuous_process_min_max_construction}, the proof of the
strong Markov property and the characterization of the generator 
requires careful work. Instead, as done in \cite{hahn23}, we construct the process
using the formalism of piecewise deterministic Markov processes
(PDMPs) developed in~\cite{davis93} so that the strong Markov property
and the characterization of the generator are immediate. As done in
\cite{bierkens23}, a bijection $\iota$ mapping the state space of the
PDMP to a more convenient state space is used to come back to
definition \ref{def:continuous_process_min_max_construction}. It is
the subject of Section \ref{secdavis}.

\subsubsection{Main results}

The following theorem is a summary of the findings obtained in this paper.

\begin{thm}\label{thm:summary}
Let $\gamma_L=(L-1)/\ell$. 
\begin{enumerate}[label={\normalfont(\arabic*)}]
\item The discrete in space processes DITP (resp. DFTP) converge, with quantitative control,  when $L$ tends to infinity, in Skorohod topology towards the continuous-space piecewise deterministic Markov process CITP (resp. CFTP).
\item The processes DITP and DFTP are exponentially ergodic with invariant measures converging as $L$ tends to infinity to the unique invariant measures of the CITP and CFTP.
\item The invariant measures of CITP and CFTP have a positive mass at $\{0\}\times \{-1,0,1\}$ and  $\{\ell\}\times \{-1,0,1\}$.
\item Let us define for a Markov process with semigroup $P_t$ and invariant measure $\pi$ its mixing time as
$t_\mix(\epsilon)=\inf\left\{t\ge0\,;\, sup_\mu\|\mu P_t-\pi\|_{TV}\le \epsilon\right\}$.
\begin{enumerate}
\item[\normalfont (a)] The mixing time $t_\mix(\epsilon)$ of CITP is of the order $C'(\epsilon)\frac 1\omega(1+\omega^2l^2)$ for some $C'(\epsilon)$.
\item[\normalfont (b)] The mixing time $t_\mix(\epsilon)$ of CFTP is of the order $C'(\epsilon)(\frac1\alpha+\frac1\beta)(1+\alpha^2l^2)$ for some $C'(\epsilon)$.
\end{enumerate}
\end{enumerate}
\end{thm}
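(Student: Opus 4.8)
Since Theorem~\ref{thm:summary} bundles four essentially independent claims, I would establish each separately and then assemble them. \emph{For Part (1), quantitative convergence}, the plan is to realise the discrete and continuous processes on one probability space and bound their Skorohod distance directly, rather than through generators. Because the velocity pair $(\sigma^1,\sigma^2)$ follows identical rates in both settings, I would drive both processes by the \emph{same} velocity trajectory. Writing $a_L=\ell/(L-1)$ for the lattice spacing, so that $\gamma_L a_L=1$, the rescaled separation $a_L(y^L-1)\in[0,\ell]$ has unjammed drift $\sigma^2-\sigma^1$, matching the CITP/CFTP velocity of $x$. On each inter-tumble interval the velocities are frozen: the continuous separation then moves ballistically with truncation at $\{0,\ell\}$, while the discrete one is a difference of two Poisson counts subjected to the same truncation. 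I would control the net-displacement fluctuation over the interval by a concentration inequality for Poisson variables, and use the monotonicity of $x\mapsto\max[0,\min[\ell,x]]$ to transfer this closeness through the jamming, since the truncations preserve order and hence distance. Summing over the finitely many tumbles in a fixed horizon together with a union bound then yields an explicit, quantitative Skorohod bound.

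\emph{For Parts (2) and (3)}, each discrete process lives on a finite irreducible state space, so exponential ergodicity and the existence and uniqueness of $\pi_L$ are automatic. For $\pi_L\to\pi$ I would combine Part (1) with a uniform-in-$L$ mixing estimate: finite-horizon convergence in law gives $\mu_L P^L_t\to\mu P_t$, and uniform mixing allows the limits $L\to\infty$ and $t\to\infty$ to be interchanged, producing convergence of the invariant measures in Wasserstein distance. For Part (3), I would read off the boundary behaviour of the PDMP: whenever $\sigma^2-\sigma^1<0$ (resp.\ $>0$) the separation is pinned at $0$ (resp.\ $\ell$) until the next tumble. Since the incoming flux to such a jamming configuration is strictly positive and the dwell time there is exponential with positive mean, the stationary balance forces a strictly positive atom at $\{0\}\times\{-1,0,1\}$ and $\{\ell\}\times\{-1,0,1\}$.

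\emph{For the upper bound in Part (4)}, I would use the coupling characterisation of total variation together with an order-preservation argument. Coupling the velocities of two copies synchronises them in time $O(1/\omega)$ (resp.\ $O(1/\alpha+1/\beta)$), the relaxation time of the finite velocity chain. Once the velocities agree, the two separations share the same velocity and evolve by the same truncated rule, so by monotonicity they stay ordered and coalesce the first time the leading one is pinned at a boundary long enough for the follower to catch up---jamming is the coalescence mechanism. The coalescence time is thus a boundary hitting time for the separation, which exhibits two regimes: when the persistence length $\sim 1/\omega$ exceeds $\ell$ the domain is crossed ballistically in time $O(\ell)\le O(1/\omega)$, so velocity relaxation dominates; when $\ell$ is large the separation performs an effective diffusion with diffusivity $\sim 1/\omega$, giving hitting time $O(\omega\ell^2)$. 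Adding the two contributions reproduces $\tfrac1\omega(1+\omega^2\ell^2)$ (resp.\ $(\tfrac1\alpha+\tfrac1\beta)(1+\alpha^2\ell^2)$).

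\emph{For the lower bound and the main difficulty}, I would exhibit explicit obstacles to mixing: in the persistent regime the velocity law has not relaxed before time $\sim 1/\omega$, which a velocity-dependent test function detects; in the diffusive regime, starting localised near one boundary, the separation cannot reach the opposite region before time $\sim\omega\ell^2$, and a concentration inequality for the time-integrated velocity keeps its law far from $\pi$ in total variation until then. I expect Part (4) to be the crux: making the hitting-time upper bound and the concentration lower bound sharp enough to match across the persistent--diffusive crossover, while correctly accounting for the boundary (jamming) contributions in both. The quantitative Skorohod estimate of Part (1), with Poisson fluctuations near a moving truncation boundary, is a secondary technical hurdle.
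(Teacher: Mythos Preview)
Your overall plan matches the paper's closely: the same coupling strategy for Part~(1), the same uniform-mixing plus limit-interchange for Part~(2), and the same velocity-coupling followed by order-preservation and hitting-time analysis for Part~(4), with matching lower bounds via slow observables and concentration. The paper carries out exactly this program.

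There is, however, one genuine gap in your Part~(1) sketch. You write that the discrete separation ``is a difference of two Poisson counts subjected to the same truncation'' and that monotonicity of $x\mapsto\max[0,\min[\ell,x]]$ transfers closeness through jamming. This is correct when $\sigma^1\neq\sigma^2$, but fails when $\sigma^1=\sigma^2=\pm1$. In that case the continuous separation is constant (drift zero), while the discrete $y^L$ is a \emph{reflected} symmetric random walk: at the boundary one particle is blocked while the other still jumps, so $y^L$ bounces. The truncated free walk $\max[1,\min[L,z^L]]$ does not reproduce this reflected law---once $z^L$ overshoots the boundary, the truncation stays pinned until $z^L$ recrosses, which is a different process. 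The paper handles this by replacing the truncation with a $2L$-periodic folding map $p_L$ (Definition~\ref{def:discrete_continuous_coupling}) that is $1$-Lipschitz and maps the free $\mathbb Z$-walk onto the correct reflected rates; the Lipschitz property is then what carries the Poisson fluctuation bound through. Without this device, your coupling does not produce a bona fide DITP/DFTP on the inter-tumble intervals with $\sigma^1=\sigma^2=\pm1$, and the quantitative bound is not justified there.

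A minor difference in Part~(3): you argue softly via positive flux and positive mean dwell time at the boundary, whereas the paper simply reads the Dirac masses off the explicit invariant measure computed in the appendix. Your argument is plausible but would need the PDMP boundary balance to be made precise; the paper sidesteps this by direct computation.
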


The precise statements of points (1)--(3) are given in Theorem
\ref{thm:quantitative_discrete_continuous_convergence}, Corollary
\ref{cor:limit_interchange} and in the appendix. It enables to justify
the analysis of \cite{slowman16,slowman17} who have calculated the
invariant measure of the DITP and DFTP and have passed to the limit in
order to assert that this processes exhibit a MIPS phenomenon,
i.e. only their intrinsic movement allows for agglomerate of particles
asserted by a positive mass at $0$ and $\ell$.  This result not only
ensures the validity of their analysis but also provides a direct way
to build processes (PDMPs) at the continuous level. Note that the
explicit invariant measures obtained in the Appendix is a particular
case of the general study of \cite{hahn23} where universality classes
(for the invariant measures) were derived for general pairs of RTPs
depending on the behavior at the jamming boundary.

{\color{black} The coupling strategy used here to derive the discrete
  to continuous limit may be extended to tumble mechanisms beyond the
  DITP~\cite{slowman16} and DFTP~\cite{slowman17}, although doing so
  would require certain adaptations. A more general framework, capable
  of encompassing arbitrary tumbling mechanisms as considered
  in~\cite{hahn23}, could be more naturally formulated using a
  generator-based approach (see
  Remark~\ref{rem:no_proof_of_discrete_continuous_convergence_in_general}). However,
  this comes at the expense of losing quantitative control over the
  convergence bounds. For this reason, we focus in the following on a
  coupling-based analysis applied specifically to the CITP and CFTP.}

The mixing times were not studied before, despite the importance of the understanding of the speed of convergence to stationarity. The
precise statements of the results are given in Theorem
\ref{thm:mixing_time}. Most interestingly, it reveals two regimes controlled by the parameter \( \omega \ell \) for the CITP
\begin{itemize}
\item the {\it persistent} regime $\omega \ell \ll 1$ where the mean time between two stochastic
  velocity changes is much greater than the time needed to reach a
  jammed configuration,
\item the {\it diffusive} regime $\omega \ell \gg 1$ where the mean time between two stochastic
  velocity changes is much lower than the time needed to reach a jammed
  configuration.
\end{itemize}
This allows for a heuristic interpretation of the mixing time in terms
of slow observables. In the persistent regime, the slowest observables
are the velocities $\sigma^i$ so the mixing time is of order $1/\omega$. In the diffusive regime, the position
$x$ is the slowest observable and its dynamic is approximately
Brownian leading to the mixing time $\omega \ell^2$ with the quadratic dependence in
$\ell$ characteristic of diffusions.

For the CFTP, the mixing time is the product of
\( \frac1\alpha + \frac 1\beta \), the average time needed to see a velocity
flip of each kind, and \( 1 + \alpha^2 \ell^2 \) which accounts for
the possibly diffusive behavior. Notice that the mixing times in
theorem~\ref{thm:summary}~(4) are coherent with the fact that, when
\( \beta \to +\infty \), the CFTP becomes the CITP with
\( \omega = \alpha/2 \).

\subsubsection{Plan of the paper}

In the following, the stochastic processes CITP and CFTP are
constructed in section~\ref{secdavis} considering the PDMP formulation
of Davis \cite {davis93}, with a particular emphasis on their
generator. The continuous PDMP description proves helpful to derive
the explicit form of the invariant measure, as was generally done in
\cite{hahn23}, but also for mixing time evaluation,
\textcolor{myGreen}{as can be seen in
  Lemma~\ref{lem_maximum_hitting_time_of_0}}. In section
\ref{sec:from_discrete_to_continuous_space}, it is shown that, under
the right rescaling, the discrete process converges in law to the
continuous process with respect to the Skorokhod metric. It is also
proven that the on-lattice invariant measures converge to the
continuous invariant measure with respect to the Wasserstein
distance. Proofs are based on coupling between the discrete state
process and the continuous one. In section
\ref{sec:mixing_time_of_the_continuous_process}, the scaling of the
mixing time of the continuous process as a function of model
parameters is determined. The arguments leading to such precise
controls on the mixing time relies on sharp controls of expectation of
particular hitting times of the continuous-space processes, as well as
concentration of additive functionals of the velocity
processes. Finally, for the sake of completeness, the explicit formula
for the invariant measure of the continuous processes is recalled in
the appendix, directly for CITP and CFTP, using a similar but
alternative approach to that in~\cite{hahn23}.

\section{PDMP formulation of the continuous-space
processes}\label{secdavis}

Piecewise deterministic Markov processes are characterized by a
deterministic dynamic interspersed with random jumps. Their appeal
lies in the wide range of processes that fall into this category and
their straightforward construction. The complete characterization of
their generator and its domain is known, even if sometimes in an
intrinsic way. This greatly facilitates the search for their invariant
measures. \textcolor{myGreen}{In \cite{hahn23}, the PDMP formalism enables the explicit
derivation of the invariant measure and its
universality classes for two RTP in a general setting. The use of
generator is also central for the Foster-Lyapunov function method of
Meyn-Tweedie \cite{meyn09} to get rates of convergence to equilibrium.}
Furthermore, the formalism conveniently includes a way of explicitly
computing the expected value and the Laplace transform of hitting
times by solving systems of differential equations.

\textcolor{myGreen}{\subsection{PDMP construction}}

\textcolor{myGreen}{The goal of this section is to construct following
  \cite[Section 24]{davis93} the generator of the PDMP characterizing
  the process CITP (resp. CFTP) defined in
  subsection~\ref{sec_construction}. It slightly differs from the
  construction in \cite{hahn23} and can be skipped on a first read. In
  the latter, the considered PDMP encodes all the system symmetries
  (particle indistinguishability, periodicity, space homogeneity) and
  thus corresponds to the evolution of the interdistance between the
  two particles. Here,} we are interested in the limit of the discrete
process $Y^L$ studied in \cite{slowman16,slowman17} and the mixing
behavior for the particular case of the instantaneous and finite
tumbles. Thus, here, we \textcolor{myGreen}{construct the PDMP}
describing the evolution of $X_D(t) = (x(t),\nu(t))$ with \textcolor{myGreen}{
  \begin{itemize}
    \item $x(t)\in [0,\ell]$ the inter-particle separation, as
      previously defined,
    \item $\nu(t)\in K$ a variable ruling the deterministic dynamics,
      that encodes both the value of the velocities $\sigma^i$ and
      whether or not the particles are jammed;
      Indeed $K$ identifies with $\Sigma \cup \partial K$ where,
      \begin{itemize}
      \item $\Sigma = \{-1, 1\}^2$ for the CITP (resp. $\Sigma = \{-1, 0, 1\}^2$ for the CFTP); $\nu\in \Sigma$ corresponds to an unjammed state where the RTP have for velocities $\sigma=(\sigma^1,\sigma^2)$,
      \item and $\partial K = \left\{ *^\ell_\sigma : \sigma \in \Sigma_+\right\} \cup \left\{ *^0_\sigma : \sigma \in \Sigma_-\right\}$ with $\Sigma_+ = \{ (\sigma^1, \sigma^2) \in \Sigma : \sigma^2 - \sigma^1 \ge 0\}$  and $\Sigma_-~=~\{ (\sigma^1, \sigma^2)~ \in~ \Sigma : \sigma^2 - \sigma^1 \le 0\}$; $\nu\in\partial K$ corresponding to a jammed state where the RTP have for velocities $\sigma=(\sigma^1,\sigma^2)$,
        \end{itemize}        
\item and the subscript $D$ stands for Davis.
\end{itemize}}
\begin{figure}
	\centering
	\begin{subfigure}[t]{0.3\textwidth}
		\centering
		\includegraphics[scale=0.12]{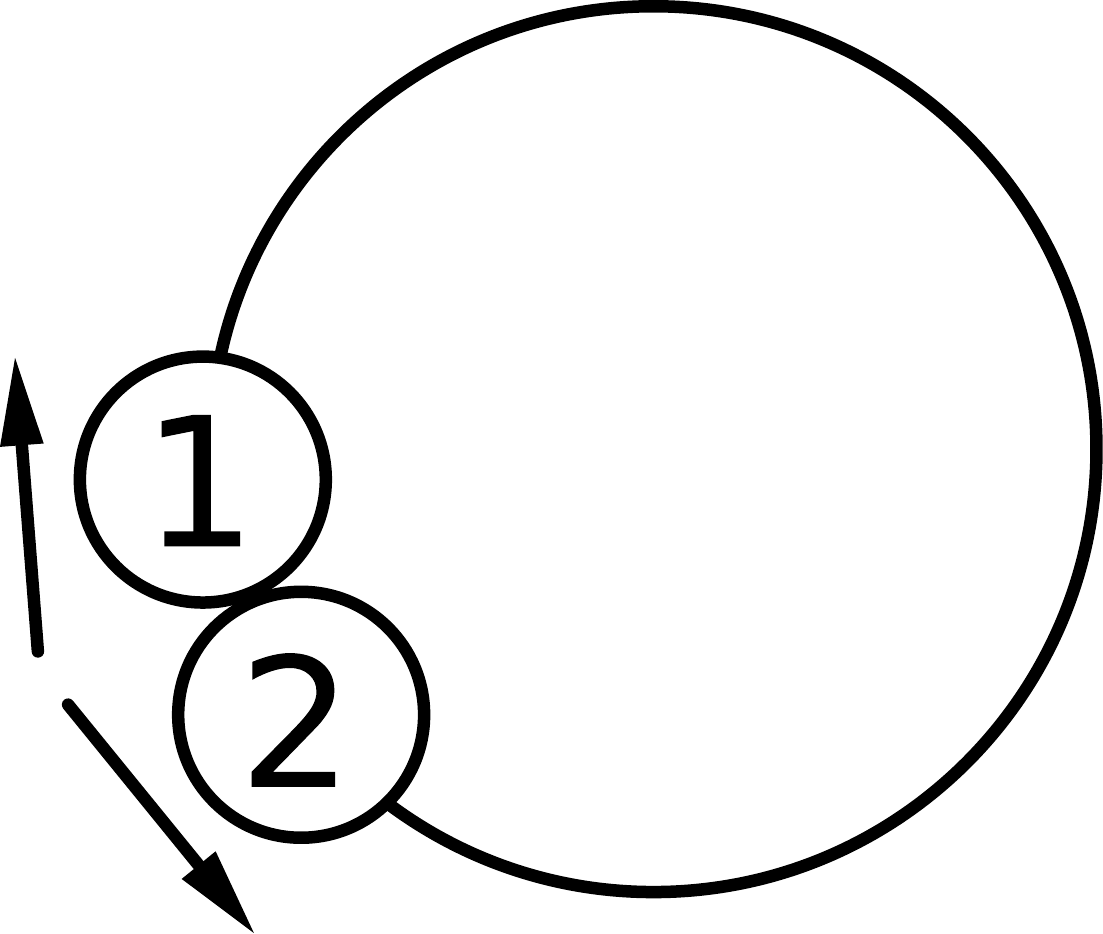}
		\caption{$x = 0$ and $\nu = {(-1, 1)}\in \Sigma$}
		\label{fig:explaining_the_state_space_bulk_contact}
	\end{subfigure}
	\begin{subfigure}[t]{0.3\textwidth}
		\centering
		\includegraphics[scale=0.12]{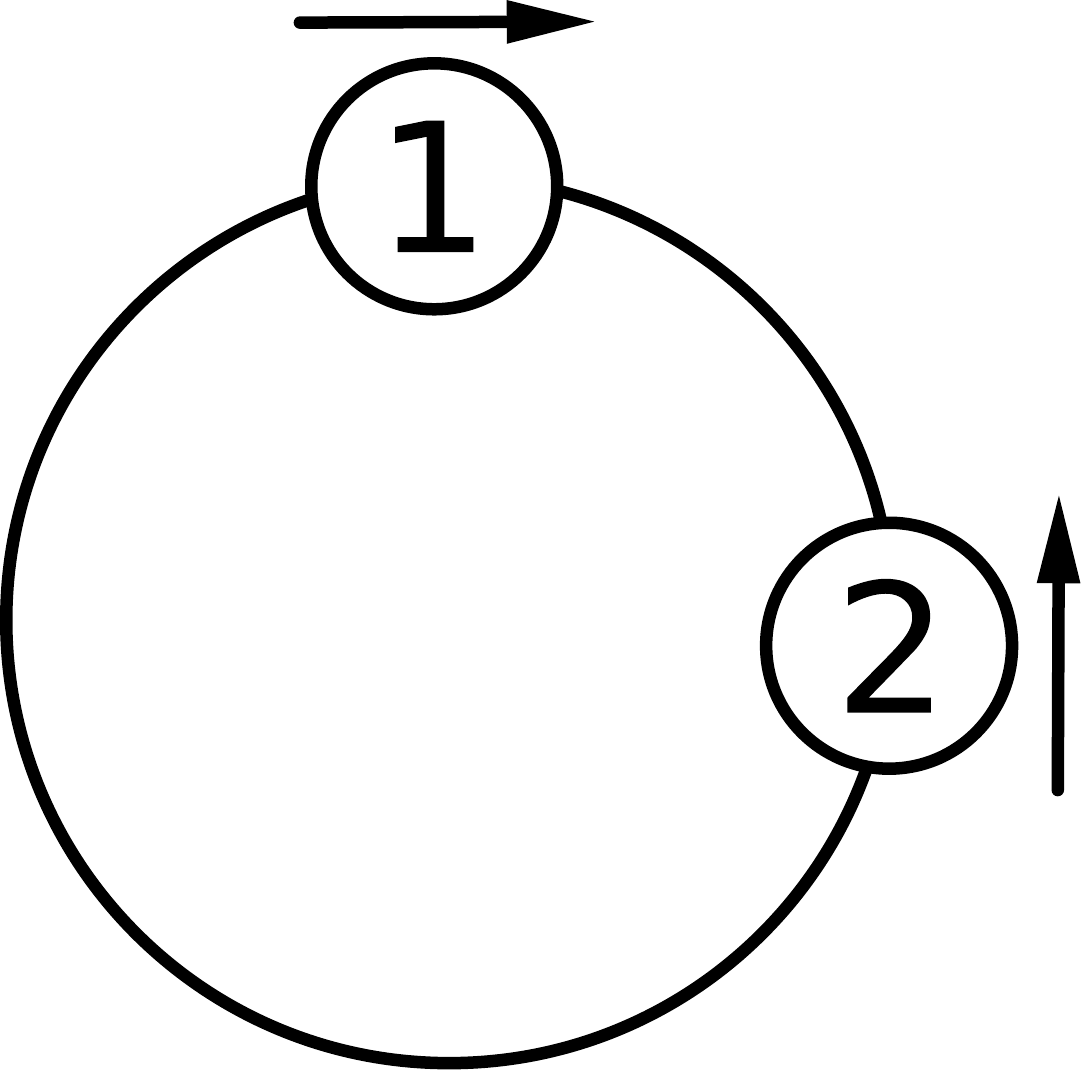}
		\caption{$0 < x < \ell$ and $\nu = (-1, 1)\in\Sigma$}
		\label{fig:explaining_the_state_space_bulk_no_contact}
	\end{subfigure}
	\begin{subfigure}[t]{0.3\textwidth}
		\centering
		\includegraphics[scale=0.12]{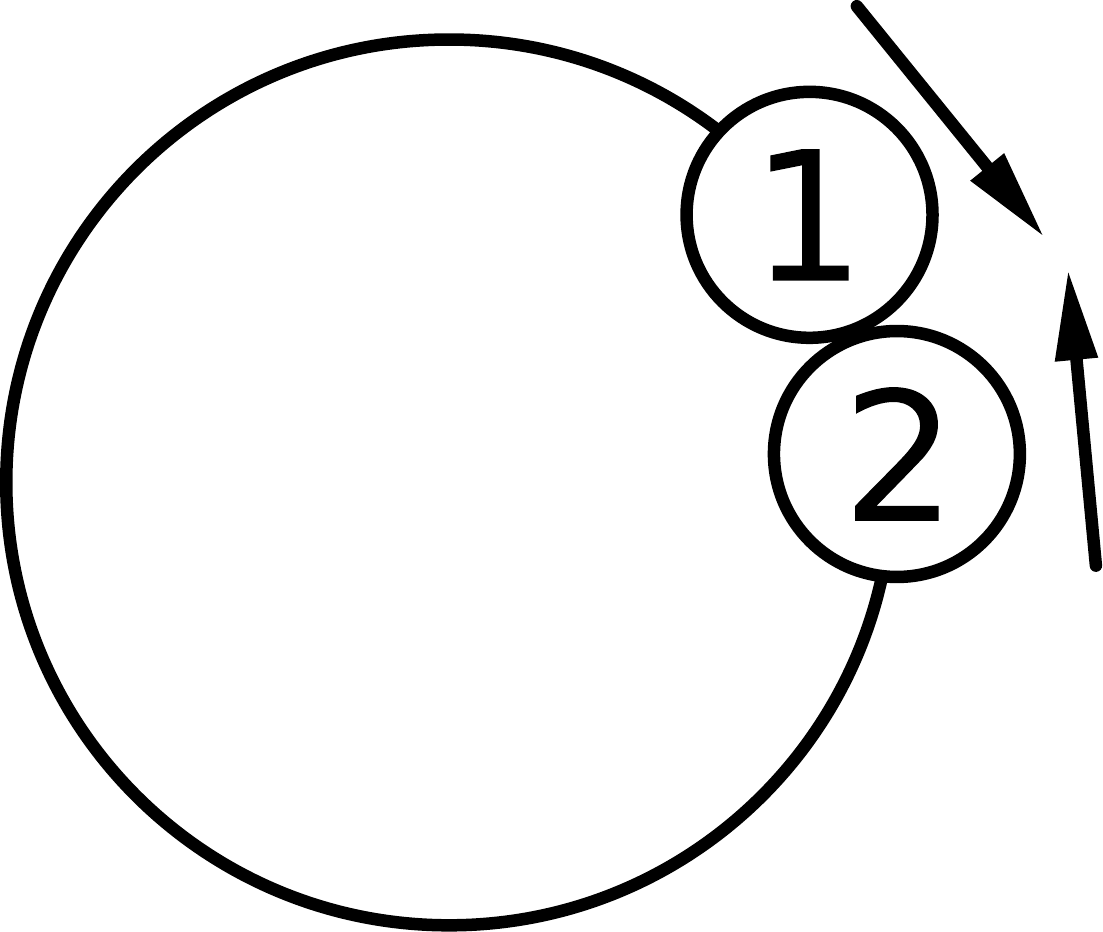}
		\caption{$x = \ell$ and $\nu = *^\ell_{(-1, 1)}\in\partial K$}
		\label{fig:explaining_the_state_space_boundary}
	\end{subfigure}
	\caption{\textcolor{myGreen}{Possible configurations (a, b, c) for the continuous process and a given dynamical variable $\sigma=(-1,1)$ are described by the separation $x$ and the variable $\nu$ encoding the dynamics or jamming state. Bulk configurations form the set  $E_{(-1,1)}$ composed of reentering configurations (a, $(x=0,\nu=(-1,1))$) and free configurations (b, $(0<x<\ell,\nu=(-1,1))$). The jamming configurations (c, $(x=\ell,\nu=*^{\ell}_{(-1,1)})$) form the set $E_{*^\ell_{(-1,1)}}$. As there is no flow mechanism in jamming configurations, there is no need to consider reentering configurations.}}
	\label{fig:explaining_the_state_space}
\end{figure}

\textcolor{myGreen}{Therefore, the process $X_D(t) = (x(t), \nu(t))$ and the process
$X(t) = (x(t), \sigma(t))$ of
definition~\ref{def:continuous_process_min_max_construction} differ
in their state space (respectively $E_D$ and $E$), as their
second component differs. The variable $\sigma = (\sigma^1, \sigma^2)\in \Sigma$
corresponds to the RTP velocities and $\nu\in K = \Sigma \cup \partial K$ encodes both velocity and
jamming information: the velocities $\sigma$ and whether the two
particles are jammed or not. Therefore, such PDMP formalism emphasizes
the role of particular points or sets where flows change or jamming
happens.  Because its construction follows~\cite{davis93}, the
generator of the process $X_D$ is immediately given by theorem 26.14
of~\cite{davis93}. We later introduce a bijection $\iota: E_D \to E$
such that $\iota(X_D(t))$ is the CITP (resp.~CFTP) of subsection
\ref{sec_construction}. Definition~\ref{def:continuous_process_min_max_construction}
and $\iota(X_D(t))$ thus constitute two equivalent constructions of
the same process with different technical advantages. Definition~\ref{def:continuous_process_min_max_construction}
is crucial for designing explicit couplings, whereas the construction
$\iota(X_D(t))$ comes with a precise characterization of the
generator.}

\textcolor{myGreen}{We now do the
  rigorous construction of $X_D$ which closely follows Section 24
  of~\cite{davis93} and adopts its notation.}
  
\textcolor{myGreen}{\emph{State space.}   We first detail the state
  space $E_D=\bigcup_{\nu \in K} E_\nu$ where the family of measurable
  sets $(E_\nu)_{\nu \in K}$ can be divided into bulk and boundary
  sets as follows:}
\begin{itemize}
\item the bulk \textcolor{myGreen}{$\cup_{\nu \in \Sigma} E_\nu$} corresponds to
  states where
  \begin{itemize}    
  \item $x \in \{0, \ell\}$ meaning that the particles are at contact
    and their velocities push them apart (ex:
    figure~\ref{fig:explaining_the_state_space_bulk_contact}). \textcolor{myGreen}{These
    states form up the entry-non-exit points, i.e. the points
    $(x,\nu)$ such that the deterministic dynamics immediately transform them to
    the separated states described just below.}
\item $0 < x < \ell$ meaning that the particles are at positive
  distance and the deterministic dynamics corresponds to the ODE
  $\partial_t x = \sigma^2 - \sigma^1$ (ex: figure
  \ref{fig:explaining_the_state_space_bulk_no_contact}). \textcolor{myGreen}{These states form up the open sets $(E^0_\nu)_{\nu \in \Sigma}$ where $E_\sigma^0 = (0, \ell) \times \left\{\sigma\right\} \text{ for } \sigma \in \Sigma$.}
\end{itemize}
\textcolor{myGreen}{Each set $E_\sigma$ corresponds to a set  $E_\sigma^0$ enlarged by adding the corresponding entry-non-exit-points, so that,
\begin{itemize}
\item
  $E_{(\sigma^1, \sigma^2)} = [0, \ell) \times \{ (\sigma^1, \sigma^2)
  \}$ for $(\sigma^1, \sigma^2) \in \Sigma$ such that
  $\sigma^2 - \sigma^1 > 0$,
\item
  $E_{(\sigma^1, \sigma^2)} = (0, \ell) \times \{ (\sigma^1, \sigma^2)
  \}$ for $(\sigma^1, \sigma^2 ) \in \Sigma$ such that
  $\sigma^2 - \sigma^1 = 0$,
\item
  $E_{(\sigma^1, \sigma^2)} = (0, \ell] \times \{ (\sigma^1, \sigma^2)
  \}$ for $(\sigma^1, \sigma^2) \in \Sigma$ such that
  $\sigma^2 - \sigma^1 < 0$,
\end{itemize}}
\item the boundary $\cup_{\nu \in \partial K} E_\nu$ corresponds to
  states where $x \in \{0, \ell \}$ meaning that the particles are at
  contact and their velocities push them together so they jam
  (ex: figure \ref{fig:explaining_the_state_space_boundary}). \textcolor{myGreen}{In such jammed states,} the distance
  $x$ is constant. \textcolor{myGreen}{These states form up the singletons $E_{*_\sigma^\ell}^0 = \{ \ell \} \times \left\{ *^\ell_\sigma \right\}\text{ for } \sigma \in \Sigma_+$ and $E_{*_\sigma^0}^0 = \{ 0 \} \times \left\{*^0_\sigma \right\} \text{ for } \sigma \in \Sigma_-$.}\\
 \textcolor{myGreen}{ As there is no deterministic flow in jamming configurations which correspond to sticky points of the dynamics, $E_\nu=E_{*_\nu}^0$ for $\nu \in\partial K$. Further details on the extension of the formalism of
Davis to include such sets can be found in \cite{bierkens23}.}
\end{itemize}

\textcolor{myGreen}{To summarize and clarify on the example of the CITP, the space $E_D$ corresponds to the following
  transformation of the state space $E = [0, \ell] \times \{\pm1\}^2$:
  \begin{itemize}
	\item for $\sigma =\pm (1, 1)$, $[0, \ell] \times \{ \sigma \}$ becomes the disjoint union of $E_{*^0_{\sigma}} := \{(0, *^0_{\sigma})\}$, $E_{\sigma} := (0, \ell) \times \{\sigma\}$ and $E_{*^\ell_{\sigma}} := \{(\ell, *^\ell_{\sigma})\}$,
	\item for $\sigma = (1, -1)$ (resp.~$\sigma = (-1, 1)$), $[0, \ell]\times \{\sigma\}$ becomes the disjoint union of $E_{*^x_\sigma} = \{(x, *^x_\sigma)\}$ where $x = 0$ (resp.~$x = \ell$) and $E_\sigma = ([0, \ell] \setminus \{ x \}) \times \{\sigma\}$.
\end{itemize}}

\textcolor{myGreen}{\emph{Dynamics.} Now we detail the deterministic dynamics. On each $E_\nu$ the process follows the deterministic flow induced by the vector field $\mathfrak X_\nu$ where
\begin{itemize}
\item for $\nu \in \Sigma$ the vector field $\mathfrak{X}_\nu$
  defined on $E_\sigma$ corresponds to the ODE
  $\partial_t x = \sigma^2 - \sigma^1$,
\item for $\nu \in \partial K$ the vector field $\mathfrak{X}_\nu$
  defined on $E_\nu$ is null.
\end{itemize}}

\textcolor{myGreen}{\emph{Stochastic jumps.}  The velocities of the particles are independent
Markov jump processes with the transition rates of figure
\ref{fig:single_speed_transition_rates_slowman_2016}
(resp. \ref{fig:single_speed_transition_rates_slowman_2017}). Recall that the states
$(x, (\sigma^1, \sigma^2)), (0, *^0_{(\sigma^1, \sigma^2)})$ and
$(\ell, *^\ell_{(\sigma^1, \sigma^2)})$ all correspond to the first
particle having velocity $\sigma^1$ and the second particle having
velocity $\sigma^2$. We
define}
$\mathcal Q = (q_{\sigma, \sigma'})_{\sigma, \sigma' \in \Sigma}$ as
the discrete generator of the couple $(\sigma^1, \sigma^2)$.  For
example, if the $\sigma^i$ follow figure
\ref{fig:single_speed_transition_rates_slowman_2016} then
\begin{align*}
\mathcal Q =
\bordermatrix{ & \sigma' = (1,1) & \sigma' = (1, -1) & \sigma' = (-1,1) & \sigma' = (-1, -1) \cr
\sigma = (1, 1) & -2 \omega & \omega & \omega & 0 \cr
\sigma = (1, -1) & \omega & -2 \omega & 0 & \omega \cr
\sigma = (-1, 1) & \omega & 0 & -2\omega & \omega \cr
\sigma = (-1, -1) &  0 & \omega & \omega & -2 \omega
}.
\end{align*}
We introduce the following projectors.                                                                 
\begin{definition}
  Define $p_{\sigma^1} : E_D \rightarrow \{-1,1\}$ (resp. $\{-1,0,1\}$) and $p_{\sigma^2}: E_D \rightarrow \{-1,1\}$ (resp. $\{-1,0,1\}$) by
\begin{align*}
p_{\sigma^1} \left(x, \nu\right) &= \sum_{(\sigma^1,\sigma^2)\in\Sigma}\sigma^1 1_{\{(\sigma^1,\sigma^2),*^0_{(\sigma^1,\sigma^2)}, *^\ell_{(\sigma^1,\sigma^2)}\}}(\nu), \\
p_{\sigma^2} \left(x, \nu\right) &= \sum_{(\sigma^1,\sigma^2)\in\Sigma}\sigma^2 1_{\{(\sigma^1,\sigma^2),*^0_{(\sigma^1,\sigma^2)}, *^\ell_{(\sigma^1,\sigma^2)}\}}(\nu).
\end{align*}
and $p_{\sigma}$  by
\begin{align*}
p_{\sigma} \left(x, \nu\right) = (p_{\sigma^1} \left(x, \nu\right),p_{\sigma^2} \left(x, \nu\right)).
\end{align*}
\end{definition}
For $(x,\nu)\in E_D$, we define the jump rate \textcolor{myGreen}{$\lambda:E_D \to \mathbb R_+$} by
$\lambda(x, \nu) = -q_{p_{\sigma}(x,\nu),p_{\sigma}(x,\nu)}$ and \textcolor{myGreen}{the post-jump distribution
kernels for $(x, \nu) \in E_D$ by}
$$
Q((x, \nu); \cdot) =\sum_{\nu' \in K}-\frac{q_{p_{\sigma}(x,\nu),p_{\sigma}(x,\nu')}}{q_{p_{\sigma}(x,\nu),p_{\sigma}(x,\nu)}}
 1_{\{(x, \nu') \in E_D\}} \delta_{(x, \nu')}.
$$
As exactly one of the elements of
$\{ (x, \sigma'); (x, *_{\sigma'}^0); (x, *_{\sigma'}^\ell) \}$ is in the
state space $E_D$, \( Q((x, \nu); \cdot) \) is indeed a probability measure.

\textcolor{myGreen}{\emph{Boundary jumps.} The last issue that has to be dealt with is the possibility that the deterministic flow induced by $\mathfrak X_\nu$ takes the process outside of $E_\nu$. This triggers an instantaneous jump from the boundary $\partial E_\nu$ back into $E_D$, the post-jump distribution of which needs to be specified. In our case, when the deterministic
dynamics leads to a state where the particles are jammed, the process
jumps to the corresponding boundary state. On the exit boundary
$\Gamma^* = \{ (0, \sigma) : \sigma \in \Sigma_+ \setminus \Sigma_-\} \cup \{
(\ell, \sigma) : \sigma \in \Sigma_+ \setminus \Sigma_-\}$, the jump kernel is
given by
\begin{align*}
Q((0, \sigma); \cdot) = \delta_{(0, *_\sigma^0)} \text{ for } \sigma \in \Sigma_+ \setminus \Sigma_- \text{ and }
Q((\ell, \sigma); \cdot) = \delta_{(\ell, *_\sigma^\ell)} \text{ for } \sigma \in \Sigma_+ \setminus \Sigma_-.
\end{align*}}

\textcolor{myGreen}{\subsection{Generator and bijection}}

Many important properties of PDMPs, such as the characterization of
their generator, are true under the set of `standard conditions'
(24.8) of \cite{davis93}. The continuous process clearly satisfies
conditions (1)--(3) and condition (4) follows from proposition (24.6)
of \cite{davis93}. Theorem (26.14) of \cite{davis93} hence yields the
following characterization of the generator.

\begin{prop}[Generator of the continuous process]\label{prop:generator_continuous_process} Let $\mathcal L$ be
  the generator of the continuous process. Its domain $D(\mathcal L)$
  is the set of bounded measurable functions
  $f : E_D \rightarrow \mathbb R$ such that
\begin{itemize}
\item for $\sigma \in \Sigma_+ \setminus \Sigma_-$ the function
  $f( \cdot, \sigma)$ is absolutely continuous on $[0, \ell)$ and
  $\lim_{x \rightarrow \ell} f(x, \sigma) =
  f\left(\ell,*_\sigma^\ell\right)$,
\item for $\sigma \in \Sigma_+ \setminus \Sigma_-$ the function
  $f(\cdot, \sigma)$ is absolutely continuous on $(0, \ell]$ and
  $\lim_{x \rightarrow 0} f(x, \sigma) = f\left(0, *_\sigma^0\right)$.
\end{itemize}

For $f \in D(\mathcal L)$ one has that
\begin{itemize}
\item in the bulk $\cup_{\sigma \in \Sigma} E_\sigma$ the generator
  comprises a transport term and a jump term
\begin{align*}
\mathcal L f(x, (\sigma^1, \sigma^2)) = &\underbrace{(\sigma^2 - \sigma^1) \partial_x f(x, (\sigma^1, \sigma^2))}_{\text{\normalfont transport term}} \\
&\quad + \underbrace{\lambda(x,\nu) \int_{E_D} \left(f(\tilde x, \tilde \nu) - f(x, (\sigma^1, \sigma^2))\right) dQ\left((x, (\sigma^1, \sigma^2)); (\tilde x, \tilde \nu)\right)}_{\text{\normalfont jump term}},
\end{align*}
\item whereas for boundary states
  $(x, \nu) \in \cup_{\nu \in \partial K} E_\nu$ the generator is
  given only by the jump term
$$
\mathcal L f(x, \nu) = \lambda(x, \nu) \int_{E_D} \left(f(\tilde x, \tilde
  \nu) - f(x, \nu)\right) dQ((x, \nu); (\tilde x, \tilde \nu))
$$
\end{itemize}
\end{prop}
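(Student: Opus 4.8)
The plan is to obtain Proposition~\ref{prop:generator_continuous_process} as a direct specialization of Davis's general generator characterization (Theorem~26.14 of \cite{davis93}), so that the real content is twofold: first, verifying that the PDMP $X_D$ constructed above meets the standard conditions~(24.8) under which that theorem applies; second, translating Davis's abstract formula — a Lie derivative along the flow plus a jump integral, with a domain constrained by an active-boundary condition — into the concrete transport-plus-jump form and the explicit absolute-continuity and matching conditions claimed.

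First I would check the standard conditions. Conditions (1)--(3) concern the measurability and regularity of the flow, the jump rate $\lambda$, and the transition kernel $Q$. These are immediate here: on each $E_\nu$ the flow is the linear translation at constant speed $\sigma^2-\sigma^1$ (and is stationary when $\nu\in\partial K$), the rate $\lambda(x,\nu)=-q_{p_\sigma(x,\nu),p_\sigma(x,\nu)}$ is a bounded constant equal to the total exit rate of the finite velocity chain $\mathcal Q$, and $Q$ is the explicit probability kernel defined above. The remaining condition (4), non-explosion of the jump count, is where Proposition~(24.6) of \cite{davis93} is invoked: since $\lambda$ is uniformly bounded and each boundary (jamming) hit is followed by a deterministic wait until the next velocity flip, at most one forced boundary jump occurs between two spontaneous velocity jumps, so the total number of jumps on $[0,t]$ is stochastically dominated by twice a Poisson process of bounded intensity and has finite expectation, which is exactly the hypothesis of (24.6).

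Then I would apply Theorem~26.14. Davis's extended generator reads $\mathcal L f=\mathfrak X f+\lambda\int(f-f(x))\,dQ$, with domain the measurable $f$ that are absolutely continuous along flow lines and satisfy the active-boundary condition $f(x)=\int f\,dQ(x;\cdot)$ on the exit set $\Gamma^*$. Specializing the Lie derivative $\mathfrak X_\nu f$ to our vector fields gives the transport term $(\sigma^2-\sigma^1)\partial_x f$ in the bulk and $0$ on the jammed sets, where $\mathfrak X_\nu\equiv 0$; absolute continuity along flow lines becomes absolute continuity in $x$ on $[0,\ell)$ when $\sigma^2-\sigma^1>0$ and on $(0,\ell]$ when $\sigma^2-\sigma^1<0$. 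Finally, feeding the explicit exit kernels $Q((\ell,\sigma);\cdot)=\delta_{(\ell,*^\ell_\sigma)}$ and $Q((0,\sigma);\cdot)=\delta_{(0,*^0_\sigma)}$ into the active-boundary condition yields precisely the matching limits $\lim_{x\to\ell}f(x,\sigma)=f(\ell,*^\ell_\sigma)$ and $\lim_{x\to 0}f(x,\sigma)=f(0,*^0_\sigma)$ of the statement.

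The main obstacle I anticipate lies not in any single computation but in the non-classical boundary structure. The jammed sets $E_\nu$, $\nu\in\partial K$, are single points carrying a null vector field — sticky states that are genuine members of the state space rather than the ordinary exit points of Davis's original framework — and the reentering contact configurations are entry-non-exit points that must be shown to impose no additional domain constraint. Justifying that the standard conditions and the conclusion of Theorem~26.14 survive this augmentation is exactly the role of the extension in \cite{bierkens23}, and the delicate part of the argument is the bookkeeping that identifies $\Gamma^*$ correctly — only the strict-approach contacts $\sigma\in\Sigma_+\setminus\Sigma_-$ are active exit points — so that no spurious boundary condition is imposed on the stationary or entry points.
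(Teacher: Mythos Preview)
Your proposal is correct and follows essentially the same approach as the paper: the paper itself simply states that conditions (1)--(3) of Davis's standard conditions (24.8) are clear, that condition (4) follows from Proposition (24.6) of \cite{davis93}, and that Theorem (26.14) of \cite{davis93} then yields the characterization. Your write-up is in fact more detailed than the paper's brief justification, correctly spelling out the specialization of the Lie derivative, the active-boundary condition, and the role of the sticky boundary extension from \cite{bierkens23}.
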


To emphasize the difference with definition
\ref{def:continuous_process_min_max_construction}, we denote this
process $X_D(t) = (x(t), \nu(t))$. Formally, definition~\ref
{def:continuous_process_min_max_construction} and the PDMP
construction model the same process, but differ on the choice of
the dynamics variable. Indeed, the vector fields
$(\mathfrak X_\nu)_{\nu\in K}$ have to be continuous, hence the
consideration of $\nu$ and its additional states forming up
$\partial K$. Out of completeness, we rigorously establish such
link, so as to be able to use them both.

\begin{definition}
Define $\iota : E_D \rightarrow [0, \ell] \times \Sigma$ by
$$
\iota(x, \nu) = \left(x, p_\sigma(x,\nu)\right).
$$
\end{definition}

\begin{prop}

  The mapping $\iota$ is a bijection.

\end{prop}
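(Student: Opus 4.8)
The plan is to exploit the fiber structure of $\iota$ over the velocity variable. Observe first that the second coordinate $p_{\sigma}(x,\nu)$ of $\iota(x,\nu)$ depends only on $\nu$ and simply reads off the velocity encoded in it: by construction of $K = \Sigma \cup \partial K$, every $\nu \in K$ is attached to a unique velocity $\sigma \in \Sigma$, being either $\sigma$ itself, or $*^0_{\sigma}$, or $*^\ell_{\sigma}$, and $p_{\sigma}$ returns precisely that $\sigma$. Consequently, for a fixed target $(x,\sigma) \in [0,\ell] \times \Sigma$, the preimage $\iota^{-1}(\{(x,\sigma)\})$ is contained in the three-element candidate set $\{(x,\sigma),\,(x,*^0_{\sigma}),\,(x,*^\ell_{\sigma})\}$, intersected with $E_D$.

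The core of the argument is therefore to show that for each such $(x,\sigma)$, exactly one of these three candidates belongs to $E_D$ --- precisely the fact already invoked when checking that $Q((x,\nu);\cdot)$ is a probability measure. I would establish it by a finite case analysis on the sign of $\sigma^2 - \sigma^1$, using the endpoint conventions in the definitions of the $E_\nu$. If $\sigma^2 - \sigma^1 > 0$, then $*^0_{\sigma} \notin K$, the bulk set $E_{\sigma} = [0,\ell) \times \{\sigma\}$ contains $(x,\sigma)$ exactly for $x \in [0,\ell)$, and $E_{*^\ell_{\sigma}} = \{(\ell,*^\ell_{\sigma})\}$ accounts for $x = \ell$; the case $\sigma^2 - \sigma^1 < 0$ is symmetric with the roles of $0$ and $\ell$ exchanged. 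When $\sigma^2 - \sigma^1 = 0$, both boundary symbols are present, $E_{\sigma} = (0,\ell) \times \{\sigma\}$ covers the open interval, and $*^0_{\sigma}$, $*^\ell_{\sigma}$ supply the two endpoints. In each case exactly one candidate lies in $E_D$ for every $x$.

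Granting this, the conclusion is immediate: each fiber $\iota^{-1}(\{(x,\sigma)\})$ is a singleton, which is simultaneously injectivity (two distinct states of $E_D$ cannot share both their $x$-coordinate and their encoded velocity without coinciding) and surjectivity (every $(x,\sigma)$ is attained). Since the reasoning is uniform in $\sigma$, the same argument covers both the CITP, with $\Sigma = \{-1,1\}^2$, and the CFTP, with $\Sigma = \{-1,0,1\}^2$, the only difference being the number of velocity fibers to enumerate.

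The main obstacle is purely bookkeeping: one must keep straight which endpoints are included in each $E_{\sigma}$ and which boundary symbols $*^0_{\sigma}$, $*^\ell_{\sigma}$ are actually present in $K$, as governed by membership in $\Sigma_-$ and $\Sigma_+$. The slightly delicate case is $\sigma^2 - \sigma^1 = 0$, where $\sigma \in \Sigma_+ \cap \Sigma_-$ so both boundary states coexist while $E_{\sigma}$ is the \emph{open} interval; here one must check that neither endpoint is doubly covered. No analytic input is needed --- once the state space definitions are unwound, the statement is entirely combinatorial.
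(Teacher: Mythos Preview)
Your proposal is correct and follows essentially the same approach as the paper: both reduce bijectivity to the claim that, for each $(x,\sigma) \in [0,\ell] \times \Sigma$, exactly one of $(x,\sigma)$, $(x,*^0_\sigma)$, $(x,*^\ell_\sigma)$ lies in $E_D$, and both establish this by a finite case analysis on the endpoint conventions of the $E_\nu$. The only cosmetic difference is that you organize the cases primarily by the sign of $\sigma^2-\sigma^1$ whereas the paper organizes them primarily by the value of $x$; the content is identical.
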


\begin{proof}
  This follows from the fact that for all $x \in [0, \ell]$ and all $\sigma=(\sigma^1,\sigma^2) \in \Sigma$ exactly one element among $(x, \sigma), (x, *_\sigma^0), (x, *_\sigma^\ell)$ is defined and an element of $E_D$. Indeed,
\begin{itemize}
\item if $x \in (0, \ell)$ then $(x, \sigma) \in E_D$ and
  $(x, *^0_\sigma)\not\in E_D$ for $\sigma \in \Sigma_-$,  $(x, *^\ell_\sigma)\not\in E_D$ for $\sigma \in \Sigma_+$,
\item if $x = 0$ then
\begin{itemize}
\item if $\sigma^2 - \sigma^1 > 0$ then $(0, \sigma) \in E_D$ and
  $(0, *^\ell_\sigma) \not \in E_D$,
\item if $\sigma^2 - \sigma^1 \leq 0$ then $(0, *^0_\sigma) \in E_D$ and
  $\{ (0, \sigma); (0, *^\ell_\sigma) \} \cap E_D = \emptyset$,
\end{itemize}
\item if $x = \ell$
\begin{itemize}
\item if $\sigma^2 - \sigma^1 < 0$ then $(\ell, \sigma) \in E_D$ and
  $(\ell, *^0_\sigma)\not \in E_D$,
\item if $\sigma^2 - \sigma^1 \geq 0$ then $(\ell, *^\ell_\sigma) \in E_D$
  and $\{ (\ell, \sigma); (\ell, *^0_\sigma) \} \cap E_D = \emptyset$.
\end{itemize}
\end{itemize}

\end{proof}

\begin{prop} \label{prop_min_max_form}

Let $X_D(t) = (x(t), \nu(t))$ be as in Section~\ref{secdavis}.

\begin{itemize}
\item[(i)] The processes $\sigma^1(t) = p_{\sigma^1}(X_D(t))$ and
  $\sigma^2(t) = p_{\sigma^2}(X_D(t))$ are two independent Markov jump
  processes with the transition rates of figure
  \ref{fig:single_speed_transition_rates_slowman_2016}
  (resp. \ref{fig:single_speed_transition_rates_slowman_2017}).
	\item [(ii)] If one recursively defines
	$$
        \tilde{x}(t) = \max \left[0, \min \left[\ell,\tilde{x}({T_n}) + (\sigma^2(t) -
            \sigma^1(t)) (t - T_n) \right]\right] \text{ for } T_n \le
        t \le T_{n+1}
$$
where $0 = T_0 < T_1 < \cdots$ are the jump times of the couple
$(\sigma^1, \sigma^2)$ then
$\iota(X_D(t)) = (\tilde{x}(t), (\sigma^1(t), \sigma^2(t)))$.
\end{itemize}
\end{prop}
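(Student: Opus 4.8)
The plan is to treat the two assertions separately: part~(i) identifies the velocity marginal of the PDMP as the prescribed pair of independent Markov jump processes, and part~(ii) establishes pathwise that the position component of $X_D$ coincides with the clipped flow $\tilde x$ once the velocities are known. Throughout I work on the single probability space carrying $X_D$, reading off $\sigma^i(t) = p_{\sigma^i}(X_D(t))$ and the jump times $(T_n)$ from this very trajectory, so that both claims become pathwise statements.

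For part~(i), the crux is that the velocity component is \emph{autonomous}: neither the spatial variable $x$ nor the jamming flag feeds back into its dynamics. Indeed, the jump rate $\lambda(x,\nu) = -q_{p_\sigma(x,\nu),p_\sigma(x,\nu)}$ and the velocity part of the post-jump kernel $Q$ depend on $(x,\nu)$ only through $p_\sigma(x,\nu)$, while the forced boundary jumps on $\Gamma^*$ send $(0,\sigma)\mapsto(0,*^0_\sigma)$ and $(\ell,\sigma)\mapsto(\ell,*^\ell_\sigma)$ without altering the velocity subscript $\sigma$. Hence $p_\sigma(X_D(t))$ changes only at the stochastic jumps, there at rate $-q_{\sigma,\sigma}$ with post-jump law $q_{\sigma,\sigma'}/(-q_{\sigma,\sigma})$. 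Since $\lambda$ is constant along an entire interval of constant velocity (the deterministic flow never changes $\nu$, and jamming leaves $p_\sigma$ unchanged), the memorylessness of the exponential holding times guarantees that inserting the boundary jumps does not perturb the law of the velocity transitions. I would thus conclude that $p_\sigma(X_D)$ is the time-homogeneous Markov jump process with generator $\mathcal Q$. Independence of $\sigma^1$ and $\sigma^2$ then follows from the Kronecker-sum structure $\mathcal Q = Q_1\otimes I + I\otimes Q_1$ of the rate matrix (with $Q_1$ the single-particle generator of figure~\ref{fig:single_speed_transition_rates_slowman_2016}, resp.~figure~\ref{fig:single_speed_transition_rates_slowman_2017}), which factorizes the semigroup as $e^{t\mathcal Q} = e^{tQ_1}\otimes e^{tQ_1}$ and hence the two coordinates into independent copies of the prescribed single-particle chain.

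For part~(ii), since $\iota(X_D(t)) = (x(t), p_\sigma(X_D(t)))$ the velocity component matches by the definition of $\iota$, so it remains to prove $x(t) = \tilde x(t)$. I would argue by induction over the velocity-jump intervals $[T_n, T_{n+1})$, with inductive hypothesis $x(T_n)=\tilde x(T_n)$ and base case $x(0)=x_0=\tilde x(0)$. On $[T_n,T_{n+1})$ the velocity is the constant $\sigma$ effective at $T_n$; set $v=\sigma^2-\sigma^1$. Analyzing the deterministic dynamics of the PDMP: in the bulk the vector field is $\partial_t x = v$; when $v>0$ the flow carries $x$ up to $\ell$, at which point the state sits on the exit boundary $\Gamma^*$ and a forced jump places it in the jammed stratum $*^\ell_\sigma$ where the field is null, freezing $x$ at $\ell$; symmetrically $v<0$ drives $x$ down to $0$ and then freezes it; and $v=0$ leaves $x$ constant. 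In every case this reproduces exactly $x(t)=\max[0,\min[\ell, x(T_n)+v(t-T_n)]]$ on the interval, which is the defining recursion for $\tilde x$, so by the hypothesis $x$ and $\tilde x$ agree on $[T_n,T_{n+1})$.

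To close the induction I would note that the position never jumps at a velocity change, since the stochastic kernel $Q((x,\nu);\cdot)$ is supported on states with the same $x$; thus $x$ is left-continuous at $T_{n+1}$ and $x(T_{n+1})=x(T_{n+1}^-)=\tilde x(T_{n+1}^-)=\tilde x(T_{n+1})$, giving the hypothesis at rank $n+1$. The one point requiring genuine care is the transition between jammed and free strata occurring at a velocity jump while $x$ sits at a boundary: if the incoming state is jammed at $\ell$ and the new velocity has $v<0$, the process must leave the jammed stratum and begin descending, whereas if $v\ge 0$ it remains jammed. I would verify that the placement rule of $Q$, which selects the unique element of $\{(x,\sigma');(x,*^0_{\sigma'});(x,*^\ell_{\sigma'})\}$ lying in $E_D$, performs precisely this selection, and that the min-max recursion evaluated with the new $v$ agrees in each subcase. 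This boundary bookkeeping is the main obstacle, though it is elementary; once it is checked, the two descriptions coincide pathwise and the proposition follows.
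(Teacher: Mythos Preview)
Your proof is correct and takes a more explicit, structural route than the paper's own argument. For part~(i), the paper computes the one-time marginal $\mathbb P_{(x,\nu)}(p_\sigma(X_D(t))=(s^1,s^2))$ directly via the differential-equation characterization from Section~3 of \cite{davis93} and observes that the solution factorizes as the product of single-particle transition probabilities; this simultaneously shows that the marginal depends on $(x,\nu)$ only through $p_\sigma(x,\nu)$ (hence $p_\sigma(X_D)$ is Markov) and identifies its law. Your approach instead argues structurally---autonomy of the velocity component plus the Kronecker-sum decomposition of $\mathcal Q$---and reaches the same conclusion without solving any equations, which is arguably more transparent about \emph{why} the factorization holds. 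For part~(ii), the paper simply invokes the recursive construction in Section~23 of \cite{davis93}, whereas you spell out the pathwise induction over velocity-constant intervals and verify the boundary bookkeeping explicitly. Your approach makes the equivalence concrete at the cost of more case-checking; the paper's deferral is terser but leaves the verification to readers familiar with Davis's formalism.
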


\begin{proof} (i) For all fixed $(s^1, s^2) \in \Sigma$ the function
  $f(x, \nu) = \mathbb P_{(x, \nu)} \left( p_\sigma(X_D(t)) = (s^1,s^2) \right)$ can be computed explicitly using its
  characterization as the solution of a system of differential
  equations (see section 3 of \cite{davis93}). One gets
$$
\mathbb P_{(x, \nu)} \left( p_\sigma(X_D(t))=(s^1,s^2)\right) = \mathbb
P_{p_{\sigma^1}(x, \nu)} \left( \varsigma_1(t) = s_1 \right) \mathbb
P_{p_{\sigma^2}(x, \nu)} \left( \varsigma_2(t) = s_2 \right) \text{
  for all } (s_1, s_2) \in \Sigma
$$
where the $\varsigma_i$ are Markov jump processes with the transition
rates of figure \ref{fig:single_speed_transition_rates_slowman_2016}
(resp. \ref{fig:single_speed_transition_rates_slowman_2017}).

(ii) Follows from the recursive construction in section 23 of
\cite{davis93}.
\end{proof}

\section{From discrete to continuous
  space} \label{sec:from_discrete_to_continuous_space}

\subsection{Convergence in law}

In this section, it is shown that under the right rescaling the discrete
process of \cite{slowman16,slowman17}, i.e. DITP (resp. DFTP),
converges to the continuous process CITP (resp. CFTP).

First, the discrete and continuous process have different state
spaces. The continuous process $X$ takes its values in
$[0, \ell] \times \Sigma$ and one can think of $\{1, \ldots L\}$ as a
discrete subset of the interval $[0, \ell]$ using the injection
$$
i_L(k) := \ell\frac{k - 1}{L - 1}.
$$
This allows one to see the discrete process as taking values in
$[0, \ell] \times \Sigma$ and to consider the process
$(i_L(y^L(t)), (\sigma^1(t), \sigma^2(t)))$ instead of
$(y^L(t), (\sigma^1(t), \sigma^2(t)))$.

It remains to understand how the parameters $\omega$ (resp. $\alpha$
and $\beta$), $\gamma$ and $L$ should scale. Thinking of the discrete
process as an approximation of the continuous process where the
interval $[0, \ell]$ is replaced by its discrete counterpart
$\{1, \ldots, L\}$, the natural scalings
\begin{align*}
(*)\left\{
\begin{tabular}{c}
	$\omega$ (resp. $\alpha$ and $\beta$) constant \\
	$L \rightarrow +\infty$ \\
	$\gamma_L = (L - 1)/\ell$
\end{tabular}
\right.
\end{align*}
correspond to taking increasingly fine discretisations of the position
$y^L(t)$ while preserving the dynamics of the $\sigma^i$ and the
physical velocity of the particles $\ell \gamma / (L - 1)$.

The convergence of Markov processes is often shown using approaches
based on generators or on martingale problems (see
\cite{ethier86}). Here, we take a different approach and instead use a
coupling argument. It requires only basic probabilistic tools and
also provides quantitative estimates for the approximation
procedure. The key insight is that, under the scalings described
above, the stochastic jumps of $y^L(t)$ become both smaller in size
and more frequent in a way that makes them converge to the constant
velocity translations of $x(t)$.

\begin{thm}[Scaling limit]
  \label{thm:quantitative_discrete_continuous_convergence}
  \begin{itemize}
  \item[(i)] There exists a coupling of the continuous process $X$ and
    a sequence of discrete processes $(Y^L)_{L \ge 2}$ with parameters
    scaling as in ($*$) such that for all $T > 0$
    \begin{align*}
      \mathbb{P}\left( \sup_{t \le T} \left| i_L(y^L(t)) - x(t) \right| \ge \epsilon \right) \le \frac{1}{\epsilon} \left( \frac{\ell}{L - 1} + 8 \sqrt{\frac{T \left((\eta T)^2 + 3 \eta T + 1 \right)\ell}{L - 1}} \right)
		\end{align*}
		where $\eta = 2\omega$ (resp. $\eta = 2\max(\alpha,
                \beta)$) for the instantaneous-tumble transition rates of the
                $\sigma^i$ (resp. the finite-tumble transition rates).
              \item[(ii)] One has
		\begin{align*}
                  \Big( (i_L(y^L(t)), (\sigma^1(t), \sigma^2(t))) \Big)_{t\ge 0} \overset{\text{\normalfont Law}}{\longrightarrow} \Big(X(t)\Big)_{t \ge 0}
		\end{align*}
                in the Skorokhod space $\mathcal D[0, +\infty)$.
              \end{itemize}
            \end{thm}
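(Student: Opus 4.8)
The plan is to realise both processes on a common probability space by driving them with the \emph{same} velocities, and then to control the remaining position discrepancy through a martingale estimate. Concretely, I would take a single pair $(\sigma^1,\sigma^2)$ of independent Markov jump processes with the rates of Figure~\ref{fig:single_speed_transition_rates_slowman_2016} (resp.~\ref{fig:single_speed_transition_rates_slowman_2017}) and use it to build \emph{both} the discrete process of Definition~\ref{def:discrete_process} and the continuous process of Definition~\ref{def:continuous_process_min_max_construction}, with the two Poisson clocks $N_1,N_2$ of rate $\gamma_L=(L-1)/\ell$ taken independent of the velocities. Under this coupling the velocity coordinates of $i_L(y^L)$ and of $X$ coincide pathwise, so the estimate (i) reduces to bounding $\sup_{t\le T}\lvert i_L(y^L(t))-x(t)\rvert$, i.e.\ to comparing the two position coordinates driven by identical velocities.

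Let $0=T_0<T_1<\cdots$ be the jump times of $(\sigma^1,\sigma^2)$ and write $I_i=[T_i,T_{i+1})$, on which the relative velocity $v_i=\sigma^2(T_i)-\sigma^1(T_i)$ is constant. The key structural observation is that on each $I_i$ the discrete walk is monotone: when $v_i>0$ one has $\sigma^1(T_i)\le 0\le\sigma^2(T_i)$, so every ring of $N_1$ or $N_2$ can only increase $y^L$ (and symmetrically when $v_i<0$). Consequently the step-by-step clamping to $\{1,\dots,L\}$ collapses to a single one-sided clamp, and after rescaling $i_L(y^L(t))=\min(\ell,\zeta_i(t))$ when $v_i>0$ (resp.~$\max(0,\zeta_i(t))$ when $v_i<0$), where $\zeta_i$ is the free, unclamped rescaled walk started from $i_L(y^L(T_i))$; likewise $x(t)=\min(\ell,\xi_i(t))$ (resp.~$\max(0,\xi_i(t))$) with $\xi_i(t)=x(T_i)+v_i(t-T_i)$. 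Because $\tfrac{\ell}{L-1}\gamma_L=1$, the compensator of the free discrete increment is exactly the deterministic continuous increment $v_i(t-T_i)$, so I can write $\zeta_i(t)-\xi_i(t)=D_i+m_i(t)$ with $D_i=i_L(y^L(T_i))-x(T_i)$ and $m_i$ a compensated-Poisson martingale on $I_i$. Since $\min(\ell,\cdot)$ and $\max(0,\cdot)$ are $1$-Lipschitz and the position is continuous at the velocity jumps, this gives the propagation bound $\lvert D_{i+1}\rvert\le\lvert D_i\rvert+\sup_{s\in I_i}\lvert m_i(s)\rvert$ together with $\sup_{t\in I_i}\lvert i_L(y^L(t))-x(t)\rvert\le\lvert D_i\rvert+\sup_{s\in I_i}\lvert m_i(s)\rvert$.

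Writing $S_i=\sup_{s\in I_i}\lvert m_i(s)\rvert$ and $N$ for the number of velocity jumps in $[0,T]$, iteration yields $\sup_{t\le T}\lvert i_L(y^L(t))-x(t)\rvert\le\lvert D_0\rvert+\sum_{i=0}^{N}S_i$, where $\lvert D_0\rvert=\lvert i_L(y^L_0)-x_0\rvert\le\ell/(L-1)$ accounts for the discretisation of the initial condition and produces the first term of the bound. The predictable quadratic variation of $m_i$ is $(\tfrac{\ell}{L-1})^2\gamma_L\big((\sigma^1)^2+(\sigma^2)^2\big)$ integrated over $I_i$, which is at most $2\tfrac{\ell}{L-1}\lvert I_i\rvert$, so Doob's $L^2$ inequality gives $\mathbb E[S_i^2\mid(\sigma^1,\sigma^2)]\le 8\tfrac{\ell}{L-1}\lvert I_i\rvert$. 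Applying Cauchy--Schwarz in the form $\big(\sum_{i\le N}S_i\big)^2\le (N+1)\sum_{i\le N}S_i^2$, conditioning on the velocity process, and using that the number of velocity jumps is stochastically dominated by a Poisson variable of mean $\eta T$ (with $\eta=2\omega$, resp.~$\eta=2\max(\alpha,\beta)$) so that $\mathbb E[(N+1)^2]\le(\eta T)^2+3\eta T+1$, I obtain an $L^2$, hence $L^1$, bound of the stated form $8\sqrt{T\big((\eta T)^2+3\eta T+1\big)\ell/(L-1)}$ for $\mathbb E[\sum_i S_i]$. Markov's inequality then converts this into the probability bound (i). For (ii), the right-hand side of (i) tends to $0$ for each fixed $T$, so $i_L(y^L)\to x$ uniformly on compacts in probability; since the velocity coordinates are equal by construction and the jumps of $i_L(y^L)$ have vanishing size $\ell/(L-1)$, the joint process converges uniformly on compacts in probability, which dominates the Skorokhod $J_1$ distance and therefore yields convergence in law in $\mathcal D[0,+\infty)$.

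The main obstacle is the rigorous treatment of the jamming boundary: one must show that the per-step clamping of the discrete walk coincides with a single $1$-Lipschitz clamp, which is what the interval-wise monotonicity buys, and then propagate the discrepancy $D_i$ across the \emph{random} number of constant-velocity intervals without losing control of the constants. The Cauchy--Schwarz step over the Poisson-distributed count of velocity jumps—and the resulting need to bound $\mathbb E[(N+1)^2]$—is precisely where the polynomial factor $(\eta T)^2+3\eta T+1$ enters, and getting the drift alignment exactly right (the identity $\tfrac{\ell}{L-1}\gamma_L=1$ forced by the scaling $(*)$) is what makes $m_i$ a genuine martingale rather than merely an approximate one.
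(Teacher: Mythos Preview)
Your coupling and martingale strategy is essentially the paper's, and for intervals with $v_i\neq 0$ your monotonicity observation is correct: the step-by-step clamping does collapse to a single one-sided clamp. However, you have overlooked the case $v_i=0$ with $\sigma^1(T_i)=\sigma^2(T_i)=\pm 1$, which occurs with positive probability in both the CITP and the CFTP. On such an interval $x$ is constant, but $y^L$ is \emph{not}: a ring of $N_1$ sends $y^L\to y^L-1$ and a ring of $N_2$ sends $y^L\to y^L+1$ (each clamped to $\{1,\ldots,L\}$), so $y^L$ is a symmetric random walk with sticky boundaries. This walk is \emph{not} a $1$-Lipschitz function of the free walk $\zeta_i$, and your propagation inequality $|D_{i+1}|\le |D_i|+\sup_{s\in I_i}|m_i(s)|$ fails. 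Concretely, take $L=5$, $\ell=4$, $y^L(T_i)=1$, $x(T_i)=0$ (so $D_i=0$) and the ring sequence $N_1,N_1,N_2,N_2,N_2$: the free walk ends with displacement $+1$ and $\sup|m_i|=2$, but the sticky walk ends at $4$, giving $|D_{i+1}|=3>0+2$.

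This is precisely the obstacle the paper singles out. Its remedy is to \emph{change the pathwise coupling} on those intervals: instead of the step-by-step clamp, it sets $y^L(t)=p_L(z^L(t))$ where $p_L:\mathbb Z\to\{1,\ldots,L\}$ is the $2L$-periodic folding map. The point is twofold: (a) $p_L$ is $1$-Lipschitz, so the same propagation bound $|D_{i+1}|\le |D_i|+\tfrac{1}{\gamma_L}|D_k(t)|$ goes through; (b) one must then check that $p_L(z^L)$ has the \emph{same law} as the sticky walk, which the paper does via a lumpability argument (Lemma~\ref{lem:properties_of_the_discrete_continuous_coupling}(i), invoking \cite{ball93}). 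Without this reconstruction of the coupling on the $\sigma^1=\sigma^2=\pm1$ intervals, the argument does not close; once you insert it, the rest of your outline (Cauchy--Schwarz over the Poisson number of velocity intervals, Doob's inequality, and the $(\eta T)^2+3\eta T+1$ moment bound) matches the paper's proof.
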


            As the supremum distance is finer than the
            Skorokhod distance, assertion (i) implies that for all
            $T > 0$
$$
\Big( (i_L(y^L(t)), (\sigma^1(t), \sigma^2(t))) \Big)_{t \in [0, T]} \overset{\text{\normalfont Law}}{\longrightarrow} \Big(X(t)\Big)_{t \in [0, T]}
$$
in the sense of random variables taking their values in the Skorokhod
space $\mathcal D[0, T]$. Hence assertion (ii) follows by theorem 16.7
of \cite{billingsley99}. Therefore, the rest of this subsection is
dedicated to the proof of assertion (i).

The jamming mechanism at \( y^L = 1 \) and \( y^L = L \) makes the
control of \( (i_L(y^L(t)) - x(t)) \) challenging. Therefore, we first
consider the following simplified model, without any jamming and taking
values in $\mathbb Z$. For this model, we
derive a uniform bound similar to the one of assertion (i) in theorem
\ref{thm:quantitative_discrete_continuous_convergence}.

\begin{lem} \label{lem:toy_problem} Let $\sigma^1$ and $\sigma^2$ be
  two independent Markov jump processes with the transition rates of
  figure~\ref{fig:single_speed_transition_rates_slowman_2016}
  (resp. \ref{fig:single_speed_transition_rates_slowman_2017}) and let
  $N_1^L$ and $N_2^L$ be two independent Poisson processes with rate
  $\gamma_L$ independent of the $\sigma^i$. \green{Denote $0 = \Theta^{L, i}_0 < \Theta^{L, i}_1 < \cdots$ the jump times of $N_i^L$. If one sets
$$
S^L(t) := \sum_{i = 1, 2} \sum_{k = 1}^{+\infty} 1_{\{ \Theta^{L,i}_k \le t \}}  (-1)^i \sigma^i(\Theta^{L,i}_k) \text{ and } \ I(t) := \int_0^t (\sigma^2(s) - \sigma^1(s))ds
$$}
then one has
$$
\mathbb P \left( \sup_{t \le T} \left| \frac{1}{\gamma_L} S^L(t) - I(t)\right| \ge \epsilon \right) \le \frac{2}{\epsilon} \sqrt{\frac{T}{\gamma_L}}.
$$
\end{lem}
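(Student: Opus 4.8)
The plan is to recognize the centered quantity $\frac{1}{\gamma_L}S^L(t) - I(t)$ as a rescaled martingale and to control its running supremum by Doob's maximal inequality. First I would rewrite the jump sum as a stochastic integral against the two Poisson processes. Writing $S^{L,i}(t) = \sum_{k}1_{\{\Theta^{L,i}_k \le t\}}\sigma^i(\Theta^{L,i}_k) = \int_{(0,t]}\sigma^i(s)\,dN_i^L(s)$, the definition gives $S^L(t) = S^{L,2}(t) - S^{L,1}(t)$. Since $N_i^L$ is a rate-$\gamma_L$ Poisson process \emph{independent} of $\sigma^i$, its compensator is $\gamma_L t$, and the deterministic part $\gamma_L\int_0^t\sigma^i(s)\,ds$ matches exactly the $i$-th contribution to $\gamma_L I(t)$. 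Consequently
\[
\gamma_L\Big(\tfrac{1}{\gamma_L}S^L(t) - I(t)\Big) = M^{L,2}(t) - M^{L,1}(t) =: M^L(t), \quad\text{where}\quad M^{L,i}(t) = \int_{(0,t]}\sigma^i(s)\,\big(dN_i^L(s) - \gamma_L\,ds\big),
\]
so the whole problem reduces to bounding $\sup_{t\le T}|M^L(t)|$.

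The cleanest way to establish the martingale structure and compute the relevant second moment is to condition on the pair of velocity trajectories $(\sigma^1,\sigma^2)$, which is legitimate precisely because the $\sigma^i$ are independent of $N_1^L,N_2^L$. Conditionally on $(\sigma^1,\sigma^2)$ each integrand is a bounded deterministic function and $N_1^L,N_2^L$ remain independent Poisson processes, so each $M^{L,i}$ is a mean-zero square-integrable martingale and the standard Poisson isometry yields $\mathbb{E}\!\left[M^{L,i}(T)^2 \mid \sigma^1,\sigma^2\right] = \gamma_L\int_0^T\sigma^i(s)^2\,ds \le \gamma_L T$ using $|\sigma^i|\le 1$. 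By independence of the two clocks the cross term vanishes, so $\mathbb{E}\!\left[M^L(T)^2\mid\sigma^1,\sigma^2\right] \le 2\gamma_L T$. This conditioning also removes the only mild subtlety, namely that $\sigma^i(\Theta^{L,i}_k) = \sigma^i(\Theta^{L,i}_k{-})$ almost surely since $\sigma^i$ and $N_i^L$ have no common jumps, so it is immaterial whether the integrand is evaluated at or just before a clock ring.

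Finally I would apply Doob's weak-type maximal inequality to the nonnegative conditional submartingale $|M^L|$, namely $\mathbb{P}\!\left(\sup_{t\le T}|M^L(t)|\ge \gamma_L\epsilon \mid \sigma^1,\sigma^2\right) \le \mathbb{E}[|M^L(T)|\mid\sigma^1,\sigma^2]/(\gamma_L\epsilon)$, followed by Cauchy--Schwarz to get $\mathbb{E}[|M^L(T)|\mid\sigma^1,\sigma^2] \le (\mathbb{E}[M^L(T)^2\mid\sigma^1,\sigma^2])^{1/2}\le\sqrt{2\gamma_L T}$. Because the resulting estimate $\tfrac{1}{\epsilon}\sqrt{2T/\gamma_L}$ is uniform in $(\sigma^1,\sigma^2)$, taking expectations over the velocities preserves it, and since $\sqrt 2 \le 2$ this gives the claimed bound $\tfrac{2}{\epsilon}\sqrt{T/\gamma_L}$. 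The only genuinely delicate point, and thus the main obstacle, is justifying that the compensated jump sum is a martingale with the stated bracket — i.e.\ identifying $\gamma_L\int_0^t\sigma^i\,ds$ as the exact compensator of $S^{L,i}$ — and the conditioning argument is exactly what makes this routine, reducing it to the elementary theory of Poisson integrals of deterministic integrands.
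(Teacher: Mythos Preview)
Your proof is correct and follows essentially the same strategy as the paper: identify $S^L(t)-\gamma_L I(t)$ as a martingale, apply Doob's maximal inequality, and bound the terminal $L^2$ norm by $\gamma_L T$ per component. The only difference is in execution---where the paper obtains the martingale via Dynkin's formula and computes the second moment by decomposing over the jump times of $(\sigma^1,\sigma^2)$, you condition on the velocity paths and invoke the Poisson isometry for deterministic integrands, which is slightly cleaner and even yields the marginally sharper constant $\sqrt{2}$ in place of $2$.
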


The proof is included for the sake of completeness.

\begin{proof}
  First, $(S^L(t), (\sigma^1(t), \sigma^2(t)))$ is a
  $(\mathbb Z \times \Sigma)$-valued Markov jump process and its
  generator $\mathcal L^L$ is given by
  \begin{multline*}
\mathcal L^L f(z, (\sigma^1, \sigma^2)) = \gamma_L \big(f(z - \sigma^1,
(\sigma^1, \sigma^2)) +  f(z + \sigma^2, (\sigma^1, \sigma^2)) - 2
f(z, (\sigma^1, \sigma^2)\big) \\+ \sum_{(s^1, s^2) \in \Sigma}
q_{(\sigma^1, \sigma^2), (s^1, s^2)} f(z, (s^1, s^2))
\end{multline*}
where
$\mathcal Q = (q_{\sigma, \sigma'})_{\sigma, \sigma' \in \Sigma}$ is
the generator of the couple $(\sigma^1, \sigma^2)$, as defined in the
previous Section. Hence Dynkin's formula implies that
$S^L(t) - \gamma_L \int_0^t (\sigma^2(s) - \sigma^1(s)) ds$ is a
martingale and by Doob's martingale inequality
\begin{align*}
&\mathbb P \left( \sup_{t \le T} \left| \frac{1}{\gamma_L} S^L(t) - I(t) \right| \ge \epsilon \right) \\
&\qquad\qquad\qquad\le \frac{\mathbb E \left| S^L(T) - \gamma_LI(T) \right|}{\epsilon \gamma_L}, \\
&\green{\qquad\qquad\qquad\le \frac{\sum_{i = 1, 2} \mathbb E \left| \sum_{k = 1}^{+\infty} 1_{\{\Theta^{L, i}_k \le T\}}\sigma^i(\Theta^{L,i}_k) - \gamma_L\int_0^T \sigma^i(s) ds \right|}{\epsilon \gamma_L}, }\\
&\green{\qquad\qquad\qquad\le \frac{2}{\epsilon \gamma_L} \sqrt{\mathbb E \left( \sum_{k = 1}^{+\infty} 1_{\{\Theta^{L, 1}_k \le T\}}\sigma^1(\Theta^{L,1}_k) - \gamma_L \int_0^T \sigma^1(s) ds \right)^2}.}
\end{align*}

It remains to show that
\green{$\mathbb E \left( \sum_{k = 1}^{+\infty} 1_{\{\Theta^{L, 1}_k \le T\}}\sigma^1(\Theta^{L,1}_k) - \gamma_L \int_0^T \sigma^1(s) ds \right)^2 \le
T\gamma_L$}. Let $(T_n)_{n \ge 0}$ be the jump times of the couple
$(\sigma^1, \sigma^2)$. The process
\green{$M(t) = \sum_{k = 1}^{+\infty} 1_{\{\Theta^{L, 1}_k \le t\}}\sigma^1(\Theta^{L,1}_k) - \gamma_L \int_0^t \sigma^1(s) ds$} is also a martingale by Dynkin's
formula so that repeated use of the optional stopping theorem yields
\begin{align*}
\mathbb E \left[ M(T)^2 \right] = \sum_{k = 1}^N \mathbb E \left[ (M(T_k \wedge T) - M(T_{k-1} \wedge T))^2 \right] + \mathbb E \left[ (M(T) - M(T_{N} \wedge T))^2 \right]. 
\end{align*}
Taking $N \rightarrow +\infty$ and using the dominated convergence
theorem one gets
$$\mathbb E \left[ (M(T) - M(T \wedge T_N))^2 \right] \rightarrow 0$$
and
$\mathbb E \left[ M(T)^2 \right] = \sum_{k = 1}^{+\infty} \mathbb E
\left[ (M(T_k \wedge T) - M(T_{k-1} \wedge T))^2 \right]$ so that
\begin{align*}
\mathbb E 
\left[ M(T)^2 \right]&= \sum_{k = 1}^{+\infty} \mathbb E \left[ \sigma^1(T_{k-1} \wedge T)^2 (N_1^L(T_{k}\wedge T) - N_1^L(T_{k-1} \wedge T) - \gamma_L (T_k \wedge T - T_{k-1} \wedge T))^2\right], \\
&\le \sum_{k = 1}^{+\infty}  \mathbb E \left[ \left(N_1^L(T_{k}\wedge T) - N_1^L(T_{k-1} \wedge T) - \gamma_L (T_k \wedge T - T_{k-1} \wedge T) \right)^2 \right] \\
&= \mathbb E \left[\left( N_1^L(T) - \gamma_L T\right)^2\right] = \gamma_L T.
\end{align*}
\end{proof}
\begin{rem}
It is easy to see that one can also directly obtain an upper bound in
the previous lemma such as $8T/(\epsilon^2\gamma_L)$ (in fact even
exponential inequality can be derived). This bound worsens with
respect to $T$, but exhibits improved behavior with respect to
$\gamma_L$.
\end{rem}

Formally, for $\gamma_L \rightarrow +\infty$, the jumps of
$S^L/\gamma_L$ have decreasing size $1/\gamma_L$ but
increasing rate $\gamma_L$ so that the physical velocity
$\gamma_L / \gamma_L = 1$ remains constant. Lemma
\ref{lem:toy_problem} rigorously implies that the stochastic jumps of
$S^L/\gamma_L$ converge to the constant velocity translations of
$I(t)$ with the supremum norm,
thanks to Doob's martingale inequality.\medskip

We now introduce the following coupling to prove the convergence of
the discrete process to the continuous process, i.e. assertion (i) of
the theorem \ref{thm:quantitative_discrete_continuous_convergence}, by
drawing on the lines of the proof of lemma \ref{lem:toy_problem}.

\begin{definition}[Discrete-continuous coupling]
  \label{def:discrete_continuous_coupling}
  Let $X(t) = (x(t), (\sigma^1(t), \sigma^2(t)))$ be a continuous
  process as defined in definition
  \ref{def:continuous_process_min_max_construction}, with fixed
  parameters $\omega$ (resp. $\alpha$ and $\beta$) and $\ell$, and
  $\sigma^1(t)$ and
  $\sigma^2(t)$ two independent Markov jump
  processes with the transition rates of figure
  \ref{fig:single_speed_transition_rates_slowman_2016}
  (resp. \ref{fig:single_speed_transition_rates_slowman_2017}). Denote
  by $0 = T_0 < T_1 < \cdots$ the jump times of the couple
  $(\sigma^1, \sigma^2)$. For $L \ge 2$, let $N_1^L$ and $N_2^L$ be
  two independent Poisson processes with parameter
  $\gamma_L = (L - 1)/\ell$ independent of the $\sigma^i$.

  Recursively define $y^L$ by setting
\[
y^L(0) = \left\lfloor (L-1)x(0)/\ell\right\rfloor+ 1 \ \text{and}\  	z^L(t) := y^L(T_k) + \sum_{i = 1, 2} (-1)^i \sigma^i(T_k)
  \left[ N^L_i(t) - N^L_i(T_k) \right],
\]
  \begin{itemize}    
  \item if $(\sigma^1(T_k), \sigma^2(T_k)) \neq \pm (1, 1)$
$$
y^L(t) = \max \left[1,  \min \left[ L, z^L(t)  \right] \right] \text{ for } t \in [T_k, T_{k+1}],
$$
\item if $(\sigma^1(T_k), \sigma^2(T_k)) = \pm (1, 1)$
$$
y^L(t) = p_L \left( z^L(t) \right) \text{ for } t \in
[T_k, T_{k+1}]
$$
where $p_L : \mathbb Z \rightarrow \{1, \ldots, L\}$ is given by
$$
p_L(k + 2 m L) =
\left\{
\begin{array}{cl}
k &\text{ for } m \in \mathbb Z \text{ and } 1 \le k \le L, \\
2L + 1 - k &\text{ for } m \in \mathbb Z \text{ and } L + 1 \le k \le 2L.
\end{array}
\right.
$$
\end{itemize}

Finally, set $Y^L(t) = (y^L(t), (\sigma^1(t), \sigma^2(t)))$.
\end{definition}

The construction in definition \ref{def:discrete_continuous_coupling}
of $Y^L$ using $p_L$ differs from definition
\ref{def:discrete_process}. The goal of this coupling is to map the
transitions of the DITP (resp.~DFTP) to the jamming-free random walk
\( z^L \), that was already studied in the toy model of
lemma~\ref{lem:properties_of_the_discrete_continuous_coupling}.  The
difference lies in the case \( \sigma^1 = \sigma^2 = \pm 1 \). A
definition through the $\max(\min(\cdot))$ definition is not correct
at jamming, as the $y^L$ is evaluated and fixed at its $t=T_k$
value. However, the definition through $p^L$ ensures a correct mapping
by exploiting the indistinguishability of the particles (i.e. the
Poisson process $N^L_1$ (resp. $N^L_2$) does not correspond to the
Poisson clock linked to particle $1$ (resp. $2$)). In more details, in
such case, the process $y^L$ comes down to a symmetric random walk
(see figure~\ref{fig:yL_transitions}), that is mapped to the symmetric
random walk $z^L$ in this manner:
\begin{itemize}
\item Case \( p_L(z^L) = y^L \notin \{ 1, L \} \); If \( z^L \) jumps
  left (resp.~right) with rate \( \gamma_L \), then \( p_L \) maps this
  to the left (resp.~right) jump of \( y^L \).
\item Case \( p_L(z^L) = y^L \in \{ 1, L \} \), say
  \( z^L = y^L = L \); If \( z^L \) jumps to \( L - 1 \) with rate
  \( \gamma_L \), then \( p_L \) maps this to the jump of \( y^L \) to
  \( L - 1 \). If \( z^L \) jumps to \( L + 1 \), the value of
  \( p_L(z^L) \) stalls at $L$ and \( y^L \) does not jump.
        \end{itemize}
        While the actual proof of the mapping is required, such
        mapping enables a direct derivation of the convergence
        bound. Indeed, it now appears possible
        to bound $\sup_{t \le T} \left| i_L(y^L(t)) - x(t) \right|$ in
        terms of $\sup_ {t \le T} |N_i^L(t) - \gamma_L t|$. Hence,
        because $(N_i^L(t) - \gamma_L t)$ is a martingale, a uniform
        bound similar to lemma \ref {lem:toy_problem} is
        attainable.
        
\begin{figure}
\centering
\includegraphics[width=6cm]{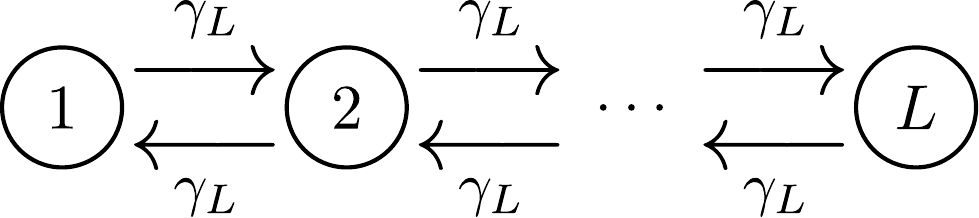}
\caption{Transition rates of the DITP (resp.~DFTP) conditioned on
\( \sigma^1 = \sigma^2 = \pm 1 \)}
\label{fig:yL_transitions}
\end{figure}

{\color{black}
\begin{rem}\label{rem:no_proof_of_discrete_continuous_convergence_in_general}
  If $\sigma^1, \sigma^2$ follow general transition rates as
  in~\cite{hahn23}, differing from
  figure~\ref{fig:single_speed_transition_rates_slowman_2017_and_2017},
  then it is possible that neither $k \mapsto \max(L, \min(1, k))$ nor
  $p_L$ map a $\mathbb Z$-valued random walk to the rates of
  $y^L$. Such an explicit mapping is however required for the proof of
  theorem~\ref{thm:quantitative_discrete_continuous_convergence}. We
  conjecture that the continuous-space limit remains valid and can be
  shown using an adapted mapping or, in a more general treatment,
  using the generator approach~\cite{ethier86}. As previously emphasized, this
  alternative generator approach does not yield a quantitative
  convergence speed and also comes with technical challenges arising from
  the boundaries of the state space, leading to a complex study of the domain of the generators considered.
\end{rem}
}

\begin{lem} \label{lem:properties_of_the_discrete_continuous_coupling}

  Let $X$ and the sequence of stochastic processes $(Y^L)_{L \ge 2}$ be
  as in definition~\ref{def:discrete_continuous_coupling}.
\begin{itemize}
\item[(i)] Each $Y^L$ is a DITP (resp. DFTP).
\item[(ii)] Let $0 = T_0 < T_1 < \cdots$ be the jump times of the
  couple $(\sigma^1, \sigma^2)$. For all $t \ge 0$ one has
\begin{align*}
\left| i_L(y^L(t)) - x(t) \right| \le &\left| i_L(y^L(0)) - x(0) \right| + \frac{1}{\gamma_L} \sum_{k = 1}^{n(t)} \left| D_k(t) \right|
\end{align*}
where $n(t) = 1 + \sum_{k = 1}^{+\infty} 1_{\{T_k \le t\}}$ and
$$
D_k(t) = \sum_{i = 1, 2} (-1)^i \sigma^i(T_{k-1}) \Big[ N_i^L(T_k \wedge t) - N_i^L(T_{k-1} \wedge t) - \gamma_L \left( T_k \wedge t - T_{k-1} \wedge t \right)\Big].
$$
\end{itemize}
\end{lem}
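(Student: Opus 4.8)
For assertion (i), I first need to verify that each $Y^L$ defined via the coupling of Definition~\ref{def:discrete_continuous_coupling} genuinely is a DITP (resp.~DFTP) in the sense of Definition~\ref{def:discrete_process}. The content here is that the two prescriptions---the $\max(1,\min(L,\cdot))$ rule and the $p_L$ rule---both reproduce the correct jump rates of $y^L$ conditioned on each velocity configuration. When $(\sigma^1,\sigma^2)\neq\pm(1,1)$, the relative velocity $\sigma^2-\sigma^1$ has a definite sign, so $z^L$ is a monotone counting difference and the $\max(1,\min(L,\cdot))$ clamping exactly encodes the jamming-at-a-wall dynamics: jumps into the free region are realized and jumps into the wall are suppressed. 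When $(\sigma^1,\sigma^2)=\pm(1,1)$, both particles move in the same direction, so $y^L$ performs a symmetric random walk reflected at $\{1,L\}$; here the reflection map $p_L$ must be checked to send the free $\mathbb Z$-valued walk $z^L$ (increments $\pm1$ at rate $\gamma_L$ each) to exactly this reflected walk, using the reflection identity $p_L(k+2mL)$ and, crucially, the indistinguishability of the two particles so that the labelling of $N_1^L,N_2^L$ is immaterial. This is the main obstacle: the reflection case requires a careful bookkeeping argument showing that the stalling of $p_L(z^L)$ at a wall corresponds precisely to a suppressed jump and not a spurious transition, and that the composite rates match Figure~\ref{fig:yL_transitions} for every starting position, including the corners $\{1,L\}$.

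For assertion (ii), I would argue on each inter-jump interval $[T_{k-1},T_k]$, on which $(\sigma^1,\sigma^2)$ is frozen, and show that on such an interval the discrete increment of $i_L(y^L)$ differs from the continuous increment of $x$ by at most $\tfrac{1}{\gamma_L}|D_k(t)|$ plus, possibly, boundary corrections. The key point is that $x(t)$ evolves by $(\sigma^2-\sigma^1)(t-T_{k-1})$ clamped to $[0,\ell]$, while $i_L(y^L(t))=\ell\frac{y^L(t)-1}{L-1}=\frac{1}{\gamma_L}(y^L(t)-1)$ evolves by $\tfrac{1}{\gamma_L}$ times the clamped counting difference $z^L(t)-y^L(T_{k-1})=\sum_{i}(-1)^i\sigma^i(T_{k-1})[N_i^L(t)-N_i^L(T_{k-1})]$. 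Subtracting the two clamped quantities, I would use that clamping to a common interval (here $[0,\ell]$, respectively $\{1,\dots,L\}$ rescaled) is $1$-Lipschitz and non-expansive, so the difference of clamped values is bounded by the difference of the unclamped values; the latter is exactly $\tfrac{1}{\gamma_L}D_k(t)$ after recognizing $\gamma_L(\sigma^2-\sigma^1)(t-T_{k-1})$ as the compensator of the counting term.

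Summing a telescoping bound over $k=1,\dots,n(t)$ then yields the stated inequality. Concretely, I would write $i_L(y^L(t))-x(t)$ as a telescoping sum of per-interval differences, apply the non-expansiveness of the clamp at each step so that the accumulated error is controlled by $\tfrac{1}{\gamma_L}\sum_k|D_k(t)|$, and absorb the initial discrepancy into the term $|i_L(y^L(0))-x(0)|$. The delicate bookkeeping is again at the walls: when both $x$ and $y^L$ sit at a jamming configuration over part of an interval, one must check that the clamp on the discrete side and the $\min/\max$ on the continuous side ``agree'' in the Lipschitz estimate rather than generating an uncontrolled error, which is exactly why the $D_k$ are defined with the frozen velocities $\sigma^i(T_{k-1})$ and compared against their exact compensator $\gamma_L(T_k\wedge t-T_{k-1}\wedge t)$. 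Since each $(N_i^L(t)-\gamma_L t)$ is a martingale, this pathwise bound is precisely what is needed to later invoke Doob's inequality, in the spirit of Lemma~\ref{lem:toy_problem}, to turn $\frac{1}{\gamma_L}\sum_k|D_k(t)|$ into the quantitative rate of Theorem~\ref{thm:quantitative_discrete_continuous_convergence}.
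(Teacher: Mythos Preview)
Your overall strategy for both parts matches the paper's: for (i), verify that the two recipes (clamp and $p_L$) reproduce the correct conditional transition rates; for (ii), bound the per-interval discrepancy using a Lipschitz property and then telescope over $k=1,\dots,n(t)$.

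There is, however, a genuine gap in your argument for (ii) in the case $\sigma^1(T_{k-1})=\sigma^2(T_{k-1})=\pm 1$. You write that $i_L(y^L(t))$ is obtained by \emph{clamping} $i_L(z^L(t))$ to $[0,\ell]$ and then invoke ``difference of clamped values $\le$ difference of unclamped values''. But in this case $y^L(t)=p_L(z^L(t))$, and $p_L$ is a \emph{reflection}, not a clamp: for instance $p_L(L+2)=L-1$ whereas $\max(1,\min(L,L+2))=L$. So $i_L(y^L(t))$ and $x(t)$ are not two clamps-to-$[0,\ell]$ of comparable underlying quantities, and your uniform ``clamp vs.\ clamp'' argument does not apply here.

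The paper handles this case differently: since $\sigma^2-\sigma^1=0$, the continuous process is frozen, $x(t)=x(T_{k-1})$, so by the triangle inequality
\[
|i_L(y^L(t))-x(t)|\le |i_L(y^L(T_{k-1}))-x(T_{k-1})|+|i_L(y^L(t))-i_L(y^L(T_{k-1}))|,
\]
and the second term is $\frac{1}{\gamma_L}|p_L(z^L(t))-p_L(y^L(T_{k-1}))|\le \frac{1}{\gamma_L}|z^L(t)-y^L(T_{k-1})|=\frac{1}{\gamma_L}|D_k(t)|$ because $p_L$ is $1$-Lipschitz (and the compensator terms in $D_k$ cancel when $\sigma^1=\sigma^2$). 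Thus the fix is easy---replace the clamp argument by a triangle inequality plus $1$-Lipschitzness of $p_L$---but your write-up as it stands does not cover this case and should distinguish it explicitly. For the remaining cases ($\sigma^1\neq\sigma^2$, or both zero in the CFTP) your clamp argument is correct and coincides with the paper's.
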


\begin{proof}
 (i) First \green{we} prove that the
  definition~\ref{def:discrete_continuous_coupling} indeed provides
  a coupling, i.e. $Y^L$ alone is indeed a DITP (or DFTP).
  
  Let $L \ge 2$ be arbitrary but fixed. Let $Y^L$ be as in
  definition \ref{def:discrete_continuous_coupling} and let $\sigma^i$
  and $N_i^L$ be the corresponding velocities and Poisson processes. Set
  $\tilde Y^L = \mathcal{Y}(\sigma^1, \sigma^2, N_1^L, N_2^L, L)$.

  To show that $Y^L$ and $\tilde Y^L$ have the same law, it suffices to show
  that $y^L$ and $\tilde y^L$ are identically distributed conditional on the
  velocities $(\sigma^1, \sigma^2)$. Let $s :
  [0, +\infty) \rightarrow \Sigma$ be an arbitrary but fixed càdlàg function.
  In the remainder of the proof of (i), take the conditional probability
  $\mathbb Q = \mathbb P \left(\cdot \, | \, (\sigma^1, \sigma^2) = s\right)$
  to be the reference probability so that functionals of the $\sigma^i$\green{, such
  as jump times,} are deterministic. Under the probability $\mathbb Q$ both
  $y^L$ and $\tilde y^L$ are time-inhomogeneous Markov processes with the
  same initial distribution so that it suffices to prove that their
  transition probabilities are the same.

  Recall that $0 = T_0 < T_1 < \cdots$ are the jump times of the
  couple $(\sigma^1, \sigma^2)$. For $t \in [T_k, T_{k+1}]$,
  \begin{itemize} \item if $(\sigma^1(T_k), \sigma^2(T_k)) \neq \pm(1, 1)$
   then \begin{align*} y_L(t) &= \max \left[0, \min \left[L, y^L(T_k) + \sum_
   {i = 1, 2} (-1)^i \sigma^i(T_k) (N_i^L(t) - N_i^L
   (T_k))\right]\right],\\ \tilde y_L(t) &= \max \left[0, \min \left
   [L, \tilde y^L(T_k) + \sum_{i = 1, 2} (-1)^i \sigma^i(T_k) (N_i^L(t) - N_i^L
   (T_k))\right]\right], \end{align*} so that the transition rates are the same
   on the time interval $[T_k, T_{k+1}]$,

 \item if $(\sigma^1(T_k), \sigma^2(T_k)) = \pm(1, 1)$ then $z^L(t)$
   is a symmetric random walk on \( \mathbb Z \) with rate
   \( \gamma_L \), the transition rates of which are denoted
   \( \hat\kappa \). Further denote \( \tilde\kappa \) the rates of
   the DITP (resp.~DFTP) conditionned on
   \( \sigma^1 = \sigma^2 = \pm 1 \) (see
   figure~\ref{fig:yL_transitions}).  For all
   \( y_1, y_2 \in \{ 1, \ldots, L \} \) one has
   \[ \sum_{z_2 \in p_L^ {-1}(y_2)} \hat\kappa(z_1 \to z_2) =
     \tilde\kappa(y_1 \to y_2) \text{ for all } z_1 \in p_L^{-1}(y_1) \] so
   \cite[Theorem 2.4]{ball93} implies that
   \( y^L(t) = p_L(z^L(t)) \) is indeed a Markov jump process
   and \cite[Theorem 2.3 (i)]{ball93} implies that its rates identfies with $\tilde\kappa$.
\end{itemize}

(ii) The velocities $\sigma^i$ are constant on the time interval
$[T_{n(t)}, t]$.
	\begin{itemize}
		\item If $\sigma^1(T_{n(t)}) \neq \sigma^2(T_{n(t)})$ one has
		\begin{align*}
                  \left| i_L( y^L(t)) - x(t) \right| &= \Bigg| \max \left[0, \min\left[\ell, i_L( y^L(T_{n(t)})) + \frac{1}{\gamma_L}  \sum_{i = 1, 2} (-1)^i \sigma^i(T_{n(t)}) (N^L_i(t) - N^L_i(T_{n(t)})) \right] \right] \\
                                                     &\quad\quad\quad- \max\left[0, \min \left[\ell, x(T_{n(t)}) + (\sigma^2(T_{n(t)}) - \sigma^1(T_{n(t)}))(t - T_{n(t)}) \right]\right] \Bigg|, \\
                                                     &\le \left| i_L( y^L(T_{n(t)})) - x(T_{n(t)})\right| \\
                                                     &\quad\quad\quad+ \frac{1}{\gamma_L}\left| \sum_{i = 1, 2} (-1)^i \sigma^i(T_{n(t)}) \left( N^L_i(t) - N^L_i(T_{n(t)}) - \gamma_L(t - T_{n(t)}) \right) \right|.
		\end{align*}
              \item If $\sigma^1(T_{n(t)}) = \sigma^2(T_{n(t)}) = 0$
                one has
                $\left| i_L( y^L(t)) - x(t) \right| = \left| i_L(
                  y^L(T_{n(t)})) - x(T_{n(t)}) \right|$.
              \item If
                $\sigma^1(T_{n(t)}) = \sigma^2(T_{n(t)}) = \pm 1$,
                because $p_L$ is $1$-Lipschitz and $i_L$ is
                $\gamma_L^{-1}$-Lipschitz, one has
		\begin{align*}
                  \left|i_L( y^L(t)) - x(t)\right| &\le \left| i_L( y^L(T_{n(t)})) - x(T_{n(t)}) \right| + \left| i_L( y^L(t)) - i_L( y^L(T_{n(t)})) \right|, \\
                                                   &= \left| i_L( y^L(T_{n(t)})) - x(T_{n(t)}) \right| \\
                                                   &\quad+ \frac{1}{\gamma_L} \left| p_L\left( y^L(T_{n(t)}) + \sum_{i = 1, 2} (-1)^i \sigma^i(T_{n(t)}) (N^L_i(t) - N^L_i(T_{n(t)}))\right)  - p_L\left( y^L(T_{n(t)})\right) \right|, \\
                                                   &\le \left| i_L( y^L(T_{n(t)})) - x(T_{n(t)}) \right| \\
                                                   &\quad\quad\quad+ \frac{1}{\gamma_L}\left| \sum_{i = 1, 2} (-1) \sigma^i(T_{n(t)}) \left( N^L_i(t) - N^L_i(T_{n(t)}) - \gamma_L (t - T_{n(t)}) \right)\right|.
		\end{align*}
              \end{itemize}
	
	So in any case
	$$
	\left|i_L( y^L(t)) - x(t)\right| \le \left| i_L(
          y^L(T_{n(t)})) - x(T_{n(t)}) \right| + \frac{1}{\gamma_L}
        \left| D_{n(t)}(t) \right|
	$$
	and iterating the same argument yields the desired bound.
\end{proof}

The proof of theorem
\ref{thm:quantitative_discrete_continuous_convergence} (i) is now possible. The
 fact that the upper bound (ii) of lemma
\ref{lem:properties_of_the_discrete_continuous_coupling} contains the term
 $\sum \left| D_k \right|$ instead of $\left| \sum D_k \right|$ leads to a
 slightly worse bound compared to the toy problem. In particular, unlike for
 the toy problem, the bound of theorem
\ref{thm:quantitative_discrete_continuous_convergence} (i) depends on $\omega$
 (resp. $\alpha$ and $\beta$).

\begin{proof}[Theorem~\ref{thm:quantitative_discrete_continuous_convergence} (i)] Let $X$
  and $(Y^L)_{L \ge 2}$ be as in definition
  \ref{def:discrete_continuous_coupling}. Keeping the same notation as
  in lemma~\ref{lem:properties_of_the_discrete_continuous_coupling} one
  has for $t \le T$
	\begin{align*}
	\left|D_k(t)\right| &= \left| \sum_{i = 1, 2} (-1)^i \sigma^i(T_k) \Big[ N_i^L(T_k \wedge t) - N_i^L(T_{k-1} \wedge t) - \gamma_L \left( T_k \wedge t - T_{k-1} \wedge t \right)\Big] \right|, \\
	&\le \sum_{i = 1, 2} \left( \left|N_i^L(T_k \wedge t) - \gamma_L (T_k \wedge t) \right| + \left|N_i^L(T_{k-1} \wedge t) - \gamma_L (T_{k-1} \wedge t) \right|\right), \\
	&\le 2 \sum_{i = 1, 2} \sup_{\theta \le T} \left| N_i^L(\theta) - \gamma_L \theta \right|. 
	\end{align*}
	
	So claim (ii) of lemma
        \ref{lem:properties_of_the_discrete_continuous_coupling}
        implies
	$$
	\sup_{t \le T} \left|i_L( y^L(t)) - x(t) \right| \le \left| i_L( y^L(0)) - x(0) \right| + \frac{2 n(T)}{\gamma_L} \sum_{i = 1, 2} \sup_{t \le T} \left|  N_i^L(t) - \gamma_L t \right|.
	$$
	
	Using Markov's inequality first and then the Cauchy-Schwarz
        inequality one gets
	\begin{align*}
	\mathbb P \left( \sup_{t \le T} \left| i_L( y^L(t)) - x(t)\right| \ge \epsilon \right) &\le \frac{1}{\epsilon} \left( \mathbb{E} \left| i_L( y^L(0)) - x(0)\right|  + \frac{4}{\gamma_L} \mathbb E \left[ n(T) \sup_{t \le T} \left| N^L_1(t) - \gamma_L t \right| \right]\right), \\
	&\le \frac{1}{\epsilon} \left( \frac{\ell}{L - 1} + \frac{4}{\gamma_L}\sqrt{\mathbb E \left[ n(T)^2 \right]} \sqrt{\mathbb{E} \left[ \sup_{t \le T} \left( N^L_1(t) - \gamma_L t\right)^2 \right]} \right).
	\end{align*}
	
	It follows from Doob's maximal inequality that
	$$
	\mathbb{E} \left[ \sup_{t \le T} \left( N^L_1(t) - \gamma_L t\right)^2 \right] \le 4 \mathbb E \left[ (N^L_1(T) - \gamma_L T)^2 \right] = 4 \gamma_L T.
	$$
	
	Finally, a coupling argument shows that one has the stochastic
        domination
	$$
	\mathbb P \left( n(T) \ge q \right) \le \mathbb P \left(1 + \tilde n \ge q \right) \text{ for all } q \ge 0,
	$$
	where $\tilde n$ is a Poisson random variable with parameter
        $\eta T$. Hence
	$$
	\mathbb E \left[ n(T)^2 \right] \le \mathbb E \left[ \left(1 +
            \tilde n\right)^2 \right] = \mathbb E \left[ \tilde
          n^2\right] + 2 \mathbb E \left[ \tilde n\right] + 1 = \left(
          \eta T \right)^2 + 3\eta T + 1
	$$
	and the desired result follows.
\end{proof}

\subsection{Convergence of the discrete invariant measures}

Let $X$ and $(Y^L)_{L \ge 2}$ be as in definition
\ref{def:discrete_continuous_coupling}. Because $Y^L$ is irreducible
and has finite state space, it has a unique invariant measure $\pi_L$
and there exist $C_L, \rho_L > 0$ such that
$$
\sup_{\mu_L} \left\| \mu_L P^L_t - \pi_L \right\|_\text{TV} \le C_L e^{-\rho_L t} \text{ for all } t \ge 0
$$
where $(P^L_t)$ is the semigroup associated to $Y^L$. Similarly, the
continuous process satisfies the Doeblin property ensuring the
existence of a unique invariant measure $\pi$ as well as exponential
convergence towards that measure. However, the bounds on the speed of
convergence towards the invariant measure are quantitatively poor.

\begin{prop}[Doeblin]
Let \( X(t) \) be the CITP (resp.~CFTP) of definition~\ref{def:continuous_process_min_max_construction} and \( (P_t) \) its semigroup.
\begin{itemize}
	\item[(i)] One has
	\[
		\inf_{(x, \sigma) \in [0, \ell] \times \Sigma} \mathbb{P}_{(x, \sigma)} \Big( X({1 + \ell/2}) = \left(0, (1, -1) \right) \Big) > 0.
	\]
	\item[(ii)] The process \( X(t) \) admits a unique invariant probability $\pi$ and
there exist $C, \rho > 0$ such that
$$
\sup_\mu \, \tv{\mu P_t - \pi} \le C e^{-\rho t} \text{ for all } t\ge
0,
$$
where the supremum is taken over all probability measures \( \mu \).
\end{itemize}
\end{prop}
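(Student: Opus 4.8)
The plan is to read assertion~(i) as a Doeblin minorization condition at the deterministic time $t_* = 1 + \ell/2$ and then deduce~(ii) from the standard total-variation contraction argument. For~(i), I would force the velocity pair to $(1,-1)$ and keep it there long enough for the transport dynamics to drive the separation to the jamming configuration at $0$. Since $\sigma = (1,-1)$ gives $\sigma^2 - \sigma^1 = -2 < 0$, Definition~\ref{def:continuous_process_min_max_construction} makes $x$ decrease at rate $2$ (clamped at $0$), and $(0,(1,-1))$ is a genuine jamming state as $(1,-1)\in\Sigma_-$. I would therefore consider the event
\[
A = \left\{ \sigma(1/2) = (1,-1) \right\} \cap \left\{ \sigma \text{ has no jump on } [1/2,\, 1+\ell/2] \right\},
\]
where $\sigma = (\sigma^1,\sigma^2)$. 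On $A$ the velocities equal $(1,-1)$ throughout $[1/2, 1+\ell/2]$, so $x(t) = \max(0,\, x(1/2) - 2(t-1/2))$; as the total available decrease $2(1/2+\ell/2) = 1 + \ell \ge \ell \ge x(1/2)$, the separation reaches $0$ before $t_*$ and stays jammed, giving $A \subseteq \{ X(t_*) = (0,(1,-1)) \}$.

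It then remains to lower bound $\mathbb P_{(x,\sigma)}(A)$ uniformly. The key observation is that $x$ is a deterministic functional of the velocity path while $A$ depends only on $\sigma$, so I can factor through the velocity chain alone: by the Markov property at time $1/2$,
\[
\mathbb P_{(x,\sigma)}(A) = \mathbb P_\sigma\big( \sigma(1/2) = (1,-1) \big)\; e^{-2\omega(1/2 + \ell/2)}
\]
for the CITP (the exponential being the probability that the $\mathrm{Exp}(2\omega)$ holding time at $(1,-1)$ exceeds $1/2+\ell/2$), with $2\omega$ replaced by $2\alpha$ for the CFTP. Because $\sigma$ is an irreducible continuous-time Markov chain on the finite set $\Sigma$, the first factor is bounded below uniformly over the finitely many starting velocities, and taking the infimum over $x\in[0,\ell]$ and $\sigma\in\Sigma$ proves~(i) with some constant $\delta>0$.

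For~(ii), I would set $\nu = \delta_{(0,(1,-1))}$ and note that~(i) is exactly the minorization $P_{t_*}((x,\sigma),\cdot) \ge \delta\, \nu(\cdot)$ for all $(x,\sigma)$. This yields the contraction $\tv{\mu_1 P_{t_*} - \mu_2 P_{t_*}} \le (1-\delta)\,\tv{\mu_1 - \mu_2}$, either by the classical Doeblin coupling (in which the two copies coalesce with probability at least $\delta$ on each block of length $t_*$) or directly from the minorization. Hence $(\mu P_{nt_*})_n$ is Cauchy with a $\mu$-independent limit $\pi$, the unique invariant probability, and $\tv{\mu P_{nt_*} - \pi} \le (1-\delta)^n$. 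Writing a general time as $t = n t_* + r$ with $0\le r<t_*$ and bounding the leftover total-variation distance by $1$ gives $\tv{\mu P_t - \pi} \le (1-\delta)^{\lfloor t/t_*\rfloor} \le C e^{-\rho t}$ with $C = (1-\delta)^{-1}$ and $\rho = -\ln(1-\delta)/t_* > 0$, which is the claim.

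The hard part is the position-driving step in~(i): I must verify that on the chosen velocity event the clamped transport ODE reaches the jamming configuration within the allotted time \emph{from every initial separation}, and that the event retains a uniform positive probability across the continuum of starting positions. This is precisely what the factorization through the finite, irreducible velocity chain secures, since it reduces the continuum of starting states to a minimum over the finitely many velocity configurations. Once~(i) is in hand, part~(ii) is the routine Doeblin/coupling conclusion.
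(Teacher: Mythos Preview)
Your argument is correct and follows essentially the same route as the paper: force the velocity pair to $(1,-1)$ by irreducibility of the finite velocity chain, then keep it there long enough (by the exponential holding-time bound) for the clamped transport to drive $x$ to $0$, yielding the uniform Doeblin minorization; part~(ii) is then the standard contraction. The only cosmetic difference is that the paper splits $t_* = 1 + \ell/2$ as $1$ (to reach $(1,-1)$) plus $\ell/2$ (no jump), whereas you split it as $1/2$ plus $1/2+\ell/2$; both work.
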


\begin{proof} (i) Let \( (x_0, \sigma_0) \in [0, \ell] \times \Sigma \) be arbitrary but fixed. The Markov property yields
\begin{align*}
\mathbb{P}_{(x_0, \sigma_0)} \Big( X({1 + \ell/2}) = (0, (1, -1)) \Big) \ge \mathbb{P}_{(x_0, \sigma_0)} \Big( \sigma(1) = (1, -1) \text{ and } X({1 + \ell/2}) = (0, (1, -1)) \Big), \\
\ge \left[ \inf_{(x, \sigma) \in [0, \ell] \times \Sigma} \mathbb{P}_{(x, \sigma)} \Big( \sigma(1) = (1, -1) \Big) \right] \times \left[ \inf_{y \in [0, \ell]} \mathbb{P}_{(y, (1, -1))} \Big( X({\ell/2}) = (0, (1, -1)) \Big) \right].
\end{align*}
Because the velocity process $\sigma(t)$ is an irreducible Markov jump process, one has 
$$
\inf_{(x, \sigma) \in [0, \ell] \times \Sigma} \mathbb{P}_{(x, \sigma)} \left( \sigma(1) = (1, -1) \right) > 0.
$$
For all \( y \in [0, \ell] \), starting from the intial state \( (y, (1, -1)) \) at time \( t = 0 \), one has
\[
	\text{there is no jump before time } t = \ell/2 \quad \implies \quad X(\ell/2) = (0, (1, -1)),
\]
so that
$$
\inf_{y \in [0, \ell]} \mathbb{P}_{(y, (1, -1))} \Big( X({\ell/2}) = (0, (1, -1)) \Big) \ge e^{-\lambda \ell} > 0,
$$
where \( \lambda = \omega \) for the CITP (resp.~\( \lambda = \alpha \) for the CFTP).

(ii) In light of the discussion above, the discrete-time Markov kernel \( Q = P_{1 + \ell / 2} \) satisfies the minorization condition
\[
	\delta_{(x, \sigma)} Q (\cdot) \ge \xi(\cdot) \text{ for all } (x, \sigma) \in [0, \ell] \times \Sigma,
\]
with \( \xi = \epsilon \delta_{(0, (-1, 1))} \) and \( \epsilon = \inf_{(x, \sigma) \in [0, \ell] \times \Sigma} \mathbb P_{(x, \sigma)} \Big( X(1 + \ell/2) = (0, (1, -1)) \Big) > 0 \).

Hence~\cite[Th.~8.7]{benaim22course} implies that \( Q \) admits a unique invariant probability measure \( \pi \) and
\[
	\tv{\mu Q^n - \pi } \le (1 - \epsilon)^n \tv{\mu - \pi},
\]
for all probability measures \( \mu \).

The proccess \( X(t) \) admits at least one invariant probability by~\cite[Prop.~4.56]{benaim22course}. Because all invariant probabilities of \( X(t) \) are invariant probabilities of \( Q \), one has that \( \pi \) is invariant for \( X(t) \) and that it is unique. Finally
\[
	\tv{\mu P_t - \pi} \le \tv{\mu P_{\left\lfloor t / T\right\rfloor T} - \pi} \le (1 - \epsilon)^{\left\lfloor t / T\right\rfloor} \tv{\mu - \pi} \le \frac1{1 - \epsilon} e^{-\frac{\log\left(\frac1{1 - \epsilon}\right)}{T} t} \tv{\mu - \pi}.
\]
\end{proof}

The convergence, under an appropriate rescaling, of $Y^L$ towards $X$
thus invites the question: does $\pi_L$ converge to $\pi$ ? This
amounts to interchanging the $L \rightarrow +\infty$ and the
$t \rightarrow +\infty$ limits and was a crucial implicit assumptions
in \cite{slowman16,slowman17}. This kind of limit interchange
requires some sort of uniformity result, which in this case takes
the form of proposition \ref{prop:uniform_discrete_mixing}.

\begin{prop}\label{prop:uniform_discrete_mixing} There exists
  $C, \rho > 0$ such that
	$$
	\sup_{L} \sup_{\mu_L} \, \tv{\mu_L P^L_t - \pi_L} \le C e^{-\rho t} \text{ for all } t \ge 0.
	$$
\end{prop}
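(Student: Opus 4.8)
The plan is to upgrade the continuous-space Doeblin argument to a version that is \emph{uniform in $L$}, by exhibiting a lattice analogue of the jammed target state together with hitting-time estimates whose constants do not depend on $L$. Concretely, I would show that there exist a time $T>0$ and a constant $\epsilon>0$, both independent of $L$, such that for every $L\ge 2$ and every initial state $(y_0,\sigma_0)$,
\[
\mathbb P_{(y_0,\sigma_0)}\big(Y^L(T)=(1,(1,-1))\big)\ge \epsilon .
\]
Since each $Y^L$ is a genuine DITP (resp.\ DFTP) by Lemma~\ref{lem:properties_of_the_discrete_continuous_coupling}(i), I can analyse it directly through Definition~\ref{def:discrete_process}. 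The target $(1,(1,-1))$ is the lattice counterpart of the continuous target $(0,(1,-1))$: with velocities $\sigma=(1,-1)$ one has $\sigma^2-\sigma^1=-2<0$, so a ring of either Poisson clock sends $y^L\mapsto\max(1,\min(L,y^L-1))$, i.e.\ it decreases the separation by one until $y^L$ reaches the jammed value $1$, where it then sticks.

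To prove the minorization I would split at time $1$ via the Markov property, exactly as in the continuous Doeblin proposition. The first factor,
\[
\inf_{\sigma_0\in\Sigma}\mathbb P_{\sigma_0}\big(\sigma(1)=(1,-1)\big)>0,
\]
is automatically uniform in $L$, since the velocity dynamics do not depend on $L$, the velocity chain is irreducible, and $\Sigma$ is finite. For the second factor I would work on the window $[1,1+\ell]$ and intersect two \emph{independent} events (independence of the clocks and the velocities is guaranteed by Definition~\ref{def:discrete_process}): (a) the velocities undergo no jump on $[1,1+\ell]$, of probability $e^{-2\lambda\ell}$ with $\lambda=\omega$ (resp.\ $\lambda=\alpha$), since the total escape rate from $(1,-1)$ is $2\omega$ (resp.\ $2\alpha$); and (b) the two Poisson clocks ring at least $L-1$ times in total on $[1,1+\ell]$. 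On this intersection the velocities stay equal to $(1,-1)$ and the at-least-$L-1$ rings force $y^L$ down to $1$ regardless of its position at time $1$ (which lies in $\{1,\dots,L\}$), so the target is reached; independence then yields the product bound $\epsilon=\big[\inf_{\sigma_0}\mathbb P(\sigma(1)=(1,-1))\big]\,e^{-2\lambda\ell}\,\mathbb P\big(\mathrm{Poisson}(2(L-1))\ge L-1\big)$.

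The main obstacle is precisely the uniform lower bound on event (b). Because the combined clock rate is $2\gamma_L=2(L-1)/\ell$, the number of rings on $[1,1+\ell]$ is $\mathrm{Poisson}(2(L-1))$, so the window must be long enough that the mean number of rings comfortably exceeds the worst-case descent length $L-1$: this is why I take $T'=\ell$ rather than the $\ell/2$ that merely matches it in expectation. It then remains to check by a Chernoff estimate that $\mathbb P\big(\mathrm{Poisson}(2(L-1))\ge L-1\big)$ stays bounded away from $0$ as $L\to\infty$ (indeed it tends to $1$), so that its infimum over $L\ge 2$ is positive; everything else is a uniform-in-$L$ transcription of the continuous estimate.

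With the uniform minorization established, the conclusion follows verbatim from the continuous Doeblin argument. The discrete-time kernel $Q^L=P^L_{1+\ell}$ satisfies $\delta_{(y_0,\sigma_0)}Q^L(\cdot)\ge\epsilon\,\delta_{(1,(1,-1))}(\cdot)$ uniformly in $L$, so \cite[Th.~8.7]{benaim22course} gives $\tv{\mu_L (Q^L)^n-\pi_L}\le(1-\epsilon)^n\tv{\mu_L-\pi_L}$, and interpolating to continuous time as in that proof yields
\[
\tv{\mu_L P^L_t-\pi_L}\le \tfrac{1}{1-\epsilon}\,e^{-\rho t}\qquad\text{with } \rho=\tfrac{\log(1/(1-\epsilon))}{1+\ell}.
\]
Since $\epsilon$, $C=\tfrac{1}{1-\epsilon}$ and $\rho$ depend only on $\ell$ and the velocity parameters and not on $L$, taking the supremum over $L$ and over $\mu_L$ gives exactly the asserted uniform bound.
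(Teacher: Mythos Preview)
Your argument is correct, and it takes a genuinely different route from the paper's own proof. The paper does \emph{not} establish a uniform Doeblin minorization; instead it builds an explicit two-copy coupling (Definition~\ref{def:discrete_discrete_coupling}), proves a deterministic lemma (Lemma~\ref{lem:deterministic_discrete_coupling_lemma}) saying that once the velocities agree, the positions merge as soon as the free walk $S^L$ has varied by $L-1$, and then shows via a Paley--Zygmund argument on the continuous integral $I(t)$ together with the $S^L/\gamma_L\to I$ approximation (Lemma~\ref{lem:variation_of_discrete_unbounded_process}) that this happens with probability $\ge 1/9$ on a fixed window, uniformly for $L\ge L_0$. Small $L$ are handled separately by finite-state irreducibility.

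Your approach is more elementary and more direct: it transplants the continuous Doeblin scenario to the lattice verbatim, and the only $L$-dependent ingredient is the Poisson tail $\mathbb P(\mathrm{Poisson}(2(L-1))\ge L-1)$, which is easily bounded away from $0$ uniformly (e.g.\ by Chebyshev for $L\ge 5$ and direct evaluation for $L=2,3,4$). This avoids the coupling construction, the deterministic order-preservation lemma, and the Paley--Zygmund step altogether, and it covers all $L\ge 2$ in one stroke. The price you pay is that your minorization constant carries the factor $e^{-2\lambda\ell}$, so the resulting $\rho$ is exponentially small in $\ell$; the paper's coupling route, being built on the diffusive fluctuations of $I$, is in principle better suited to tracking the $\ell$-dependence, and indeed the same circle of ideas is reused in Section~\ref{sec:mixing_time_of_the_continuous_process} to get sharp mixing-time bounds. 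For the present proposition, however, only existence of $L$-independent $C,\rho>0$ is claimed, and your argument delivers that cleanly.
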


The following \green{corollary} is a direct consequence of proposition
\ref{prop:uniform_discrete_mixing}.

\begin{cor}\label{cor:limit_interchange} Define
   $i^\sigma_L : \{1, \ldots, L\} \times \Sigma\rightarrow [0,\ell]\times\Sigma$ as
$$
i^\sigma_L(y, (\sigma^1,\sigma^2)) = \left( i_L(y), (\sigma^1,\sigma^2) \right).
$$
  One has
  $$W_1(i^{\sigma}_L \# \pi_L, \pi) \rightarrow 0.$$
\end{cor}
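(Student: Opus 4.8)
The plan is to compare $i^\sigma_L \# \pi_L$ with $\pi$ by running both processes up to a common finite time $t$ and then letting $t$ grow, using the \emph{uniform-in-$L$} mixing of Proposition~\ref{prop:uniform_discrete_mixing} to legitimately interchange the limits $t \to \infty$ and $L \to \infty$. I would equip $[0,\ell]\times\Sigma$ with the metric $d\big((x,\sigma),(x',\sigma')\big) = |x - x'| + \mathbf{1}_{\{\sigma \neq \sigma'\}}$, whose diameter $\mathrm{diam} = \ell + 1$ is finite, so that $W_1(\mu,\nu) \le \mathrm{diam}\cdot \tv{\mu - \nu}$ for all probability measures and $\tv{\cdot}$ does not increase under the pushforward by $i^\sigma_L$. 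I fix $x_0 \in [0,\ell]$, set $y^L_0 = \lfloor (L-1)x_0/\ell\rfloor + 1$, and run the discrete–continuous coupling of Definition~\ref{def:discrete_continuous_coupling} from these matched initial data; write $\mu^L_t$ for the law of $i^\sigma_L(Y^L(t))$ and $\nu_t$ for the law of $X(t)$.

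For any $t \ge 0$, the triangle inequality gives
$$W_1(i^\sigma_L \# \pi_L, \pi) \le W_1(i^\sigma_L \# \pi_L, \mu^L_t) + W_1(\mu^L_t, \nu_t) + W_1(\nu_t, \pi).$$
I would bound the three terms separately. Since $\pi_L$ is $P^L_t$-invariant and $\mu^L_t = i^\sigma_L \#(\delta_{y^L_0} P^L_t)$, the first term is at most $\mathrm{diam}\cdot \tv{\delta_{y^L_0} P^L_t - \pi_L} \le \mathrm{diam}\cdot C e^{-\rho t}$ by Proposition~\ref{prop:uniform_discrete_mixing}, with $C,\rho$ independent of $L$. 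Likewise, invariance of $\pi$ together with the Doeblin estimate bounds the third term by $\mathrm{diam}\cdot C e^{-\rho t}$, uniformly in $L$. For the middle term, the velocity components of the two coupled processes coincide, hence $W_1(\mu^L_t,\nu_t) \le \mathbb{E}\,\big|i_L(y^L(t)) - x(t)\big|$.

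It then remains to convert the tail estimate of Theorem~\ref{thm:quantitative_discrete_continuous_convergence}(i) into an $L^1$ bound. Writing $A = A(t,L)$ for the right-hand side of that estimate, so that $A \to 0$ as $L \to \infty$ for each fixed $t$, and using $0 \le \big|i_L(y^L(t)) - x(t)\big| \le \ell$, the layer-cake formula yields
$$\mathbb{E}\,\big|i_L(y^L(t)) - x(t)\big| \le \int_0^\ell \min\!\left(1,\, \tfrac{A}{\epsilon}\right) d\epsilon \le A\left(1 + \log\tfrac{\ell}{A}\right),$$
which vanishes as $L \to \infty$ for each fixed $t$. The conclusion follows by the usual two-step choice: given $\delta > 0$, I first pick $t$ so large that the first and third terms are each below $\delta/3$ uniformly in $L$, and then pick $L$ large enough that the middle term is below $\delta/3$.

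The genuine content, and the main obstacle, is precisely this interchange of the limits $L \to \infty$ and $t \to \infty$: a naive passage to the limit would require controlling the first term as $L$ grows, which is impossible without the uniform mixing bound of Proposition~\ref{prop:uniform_discrete_mixing}. The only other delicate point is the conversion of the tail bound into the expectation bound, where the logarithmic factor $\log(\ell/A)$ is harmless since $A\log(1/A) \to 0$.
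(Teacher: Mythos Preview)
Your proof is correct and follows essentially the same limit-interchange strategy as the paper: combine the uniform-in-$L$ mixing bound of Proposition~\ref{prop:uniform_discrete_mixing} with the finite-time coupling estimate of Theorem~\ref{thm:quantitative_discrete_continuous_convergence}, then send $t\to\infty$ after $L\to\infty$. The only difference is that the paper starts the continuous process at its invariant measure $\pi$ (and the discrete one at the discretization $s_L\#\pi$), so that $\pi P_t=\pi$ collapses your three-term triangle inequality to two terms and the continuous Doeblin bound is never invoked; your explicit layer-cake conversion of the tail bound into an $L^1$ bound is a detail the paper leaves implicit.
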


\begin{proof} Theorem
  \ref{thm:quantitative_discrete_continuous_convergence} together with
  the boundedness of $E = [0, \ell] \times \Sigma$ implies that for
  all $t \ge 0$
$$
\lim_{L \rightarrow +\infty} W_1(i^\sigma_L \# \left( \left( s_L \# \pi \right)P^L_t\right), \pi P_t) = 0
$$
where $s_L : E \rightarrow \{1, \ldots, L\} \times \Sigma$ is given by
$$
s_L(x, (\sigma^1,\sigma^2)) = \left( \left\lfloor (L - 1) x /\ell \right\rfloor  + 1, (\sigma^1,\sigma^2) \right).
$$
Since $\| X - X' \| \le \sqrt{4^2 + 4^2 + \ell^2} =: A$ for all
$X, X' \in [0, \ell] \times \Sigma$, one has
$W_1(\mu, \nu) \le A \tv{\mu - \nu}$ for all measure $\mu, \nu$ on
$[0, \ell] \times \Sigma$. One gets for all $t \ge 0$
\begin{align*}
W_1(i_L^\sigma \# \pi_L, \pi) &\le W_1(i_L^\sigma \# \pi_L, i_L^\sigma \# \left( \left( s_L \# \pi \right)P^L_t\right)) + W_1(i_L^\sigma \# \left( \left( s_L \# \pi \right)P^L_t\right), \pi P_t), \\
&\le A \tv{i_L^\sigma \# \pi_L - i_L^\sigma \# \left( \left( s_L \# \pi \right)P^L_t\right)} + W_1(i_L^\sigma \# \left( \left( s_L \# \pi \right)P^L_t\right), \pi P_t),\\
&\le A \tv{\pi_L - \left( s_L \# \pi \right)P^L_t} + W_1(i_L^\sigma \# \left( \left( s_L \# \pi \right)P^L_t\right), \pi P_t), \\
&\le A C e^{-\rho t} + W_1(i_L^\sigma \# \left( \left( s_L \# \pi \right)P^L_t\right), \pi P_t).
\end{align*}

Hence
$\limsup_{L \rightarrow +\infty} W_1(i_L^\sigma \# \pi_L, \pi) \le AC
e^{-\rho t}$. The result follows since $t \ge 0$ is arbitrary.
\end{proof}

\begin{rem}\label{rem:non_constructive_discrete_continuous_limit} The set \( E
 = [0, \ell] \times \Sigma \) is compact so \( \mathcal P(E) \) is compact w.r.t.~convergence in law. Furthermore,
 convergence in the Wasserstein metric is equivalent to convergence in law
 and convergence of the first moment hence the sequence \( (\pi^L) \) lives
 in a compact set. Thus, it suffices to prove that the invariant measure
\( \pi \) is its unique accumulation point to show
\[ W_1(i_L^\sigma \# \pi_L, \pi) \rightarrow 0.
\]

Let \(i_{L_k}^\sigma \# \pi^{L_k} \to \pi^\infty \) then the proof of
Theorem~\ref{thm:quantitative_discrete_continuous_convergence} shows \( i_
{L_k}^\sigma \# (\pi^{L_k} P^{L_k}_t) \to \pi^\infty P_t \) so that
\[ \pi^\infty P_t = \lim i_{L_k}^\sigma \# (\pi^{L_k} P^{L_k}_t) = \lim i_
 {L_k}^\sigma \# \pi^{L_k} = \pi^\infty.
\] Hence \( \pi^\infty = \pi \) is the unique invariant measure of the
 continuous process.
\end{rem}

The remainder of this section is dedicated to the proof of proposition
\ref{prop:uniform_discrete_mixing} using a coupling argument. Coupling two
 discrete processes $Y^L = (y^L, (\sigma^1, \sigma^2))$ and $\tilde Y^L =
 (\tilde y^L, (\tilde \sigma^1, \tilde \sigma^2))$ comes down to coupling their
 velocities and their Poisson clocks. The latter are taken to be identical and the
 velocities are independent until they meet for the first time and are then set
 identical. Let us formalize this in the following definition.

\begin{definition}[Discrete-discrete coupling] \label
 {def:discrete_discrete_coupling}

  Let $Y^L_0 = (y^L_0, (\sigma^1_0, \sigma^2_0))$ and
  $\tilde Y^L_0 = (\tilde y^L_0, (\tilde \sigma^1_0, \tilde
  \sigma^2_0))$ in $\{1, \ldots, L\} \times \Sigma$ be given. Now turn to
  constructing a coupling between two discrete processes $Y^L$ and
  $\tilde Y^L$ with respective initial states $Y^L_0$ and
  $\tilde Y^L_0$. Let $s^1, s^2, \tilde s^1, \tilde s^2$ be four
  independent Markov jump processes with the transition rates of
  figure \ref{fig:single_speed_transition_rates_slowman_2016}
  (resp. \ref{fig:single_speed_transition_rates_slowman_2017}) and
  respective initial states
  $\sigma^1_0, \sigma^2_0, \tilde \sigma^1_0, \tilde
  \sigma^2_0$. Define
  $\tau_i = \inf \{t \ge 0 : \sigma^i(t) = \tilde \sigma^i(t)\}$ and
  set
\begin{align*}
  \sigma^i(t) = s^i(t) \text{ for all } t \ge 0, \quad \quad \tilde \sigma^i(t) = \tilde s^i(t) \text{ for } t < \tau_i \text{ and } \tilde \sigma^i(t) = s^i(t) \text{ for } t \ge \tau_i.
\end{align*}

Let $N_1^L$ and $N_2^L$ be two independent Poisson clocks with
parameter $\gamma_L = (L-1) / \ell$ independent of
the~$\sigma^i$. Finally, set
$Y^L = \mathcal Y (\sigma^1, \sigma^2, N_1^L, N_2^L, y^L_0, L)$ and
$\tilde Y^L = \mathcal Y (\tilde \sigma^1, \tilde \sigma^2, N_1^L, N_2^L,
\tilde y^L_0, L)$.
\end{definition}

\begin{rem}
With this coupling,
\begin{align*}
\sigma^i(t_0) = \tilde \sigma^i(t_0) &\implies \sigma^i(t) = \tilde \sigma^i(t) \text{ for all } t \ge t_0, \quad \quad
Y^L(t_0) = \tilde Y^L(t_0) \implies Y^L(t) = \tilde Y^L(t) \text{ for all } t \ge t_0.
\end{align*}
\end{rem}
First, recall the coupling
characterization of the total variation distance
$$
\sup_{\mu_L} \tv{\mu_L P^L_t - \pi_L} \le \sup_{Y_0^L, \tilde Y_0^L} \mathbb P_{Y_0^L, \tilde Y_0^L} \left( Y^L(t) \neq \tilde Y^L(t) \right).
$$
Therefore, it is left to determine the stopping time
$\inf \{ t \ge 0 : Y^L(t) = \tilde Y^L(t) \}$.  Understanding the
first time where both velocities are identical
$\tau_\sigma = \max_i \tau_i$ is straightforward because it does not
depend on $L$. Thus the main obstacle is to determine the time that
elapses between $\tau_\sigma$ and
$\tau_y = \inf\{ t \ge \tau_\sigma : y^L(t) = \tilde y^L(t) \}$. The
following deterministic lemma enables this by linking
$\tau_y - \tau_\sigma$ to the variations of $S^L(t)$. \green{Here, and in the rest of this section, $S^L(t)$ and $I(t)$ are defined as in
	lemma \ref {lem:toy_problem}.}

\begin{lem} \label{lem:deterministic_discrete_coupling_lemma} Let
  $Y^L$ and $\tilde Y^L$ be as in definition
  \ref{def:discrete_discrete_coupling} and assume that
  $\sigma^1_0 = \tilde \sigma^1_0$ and
  $\sigma^2_0 = \tilde \sigma^2_0$. For any fixed realization, one has
$$
\left| S^L(T) \right| \ge L - 1 \implies \inf \{ t \ge 0 : y^L(t) = \tilde y^L(t) \} \le T,
$$
\green{where $S^L(t)$ is defined as in lemma~\ref{lem:toy_problem}.}
\end{lem}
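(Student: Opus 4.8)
The plan is to argue by contradiction, reducing the coalescence of $y^L$ and $\tilde y^L$ to a deterministic book-keeping of how often each clamped walk is blocked at a boundary, and then to show that a large net displacement $|S^L(T)|$ is incompatible with the two walks staying apart. First I would record the structure inherited from Definition~\ref{def:discrete_discrete_coupling}. Since $\sigma^1_0=\tilde\sigma^1_0$ and $\sigma^2_0=\tilde\sigma^2_0$, the coupling forces $\sigma^i(t)=\tilde\sigma^i(t)$ for all $t\ge 0$, and the two walks share the Poisson clocks $N_1^L,N_2^L$. Hence both $y^L$ and $\tilde y^L$ evolve by the same rule of Definition~\ref{def:discrete_process}: at each ring they receive the same nominal signed increment ($-\sigma^1$ for a ring of $N_1$, $+\sigma^2$ for a ring of $N_2$), whose running sum is exactly $S^L(t)$, and are then clamped by $x\mapsto\max(1,\min(L,x))$. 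As this map is nondecreasing, the order of the two walks is preserved, so I may assume $y^L_0\ge\tilde y^L_0$ and write $a=y^L\ge b=\tilde y^L$ with gap $g=a-b\in\{0,\dots,L-1\}$. If $g(0)=0$ the conclusion is immediate, so I assume $g(0)\ge 1$.

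Next I introduce, for each clamped walk $w\in\{a,b\}$ on $[0,T]$, the counts $U_w$ of \emph{top-blocks} (a nominal $+1$ suppressed because $w=L$) and $B_w$ of \emph{bottom-blocks} (a nominal $-1$ suppressed because $w=1$). Comparing nominal and actual displacement gives the exact identity $w(T)-w(0)=S^L(T)-U_w+B_w$. I then argue by contradiction: suppose $\inf\{t\ge 0:y^L(t)=\tilde y^L(t)\}>T$, so that $g(t)\ge 1$ throughout $[0,T]$. Then $a=b+g\ge 2$, so the leader never sits at the bottom wall and $B_a=0$; symmetrically $b=a-g\le L-1$, so the follower never reaches the top wall and $U_b=0$. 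Subtracting the two displacement identities yields $g(T)-g(0)=-(U_a+B_b)$, hence $U_a+B_b=g(0)-g(T)\le g(0)-1$.

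It remains to combine these facts. From the leader's identity, $S^L(T)=a(T)-a(0)+U_a\le L-a(0)+U_a$; using $a(0)=b(0)+g(0)\ge 1+g(0)$ and $U_a\le U_a+B_b\le g(0)-1$ gives $S^L(T)\le L-2$. The follower's identity gives symmetrically $S^L(T)=b(T)-b(0)-B_b\ge 1-b(0)-B_b\ge-(L-2)$, using $b(0)\le L-g(0)$ and $B_b\le g(0)-1$. Therefore $|S^L(T)|\le L-2<L-1$, contradicting the hypothesis $|S^L(T)|\ge L-1$; hence the coalescence time is at most $T$.

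The one step that needs care is the block book-keeping identity $w(T)-w(0)=S^L(T)-U_w+B_w$: I must check that, because the increments have size at most one and the walk is integer-valued in $\{1,\dots,L\}$, each clamping event suppresses exactly one unit of nominal increment, so the accounting is exact even at jamming. This unit-step feature of the discrete model is also what makes the two counts $B_a$ and $U_b$ vanish cleanly once $g\ge 1$; everything else is linear arithmetic. Note that, in contrast to Definition~\ref{def:discrete_continuous_coupling}, no $p_L$-type construction is needed here, since both copies use the genuine $\max(1,\min(L,\cdot))$ dynamics with shared clocks, so monotonicity of the update is immediate.
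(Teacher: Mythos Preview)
Your proof is correct and rests on the same ingredients as the paper's---proof by contradiction, order preservation under the shared clamped dynamics, and an analysis of blocked increments---but your execution is more elaborate. The paper dispenses with the full block accounting: assuming without loss of generality $y^L(0)\le\tilde y^L(0)$ and $S^L(T)\ge L-1$, it simply observes that if the two walks never meet on $[0,T]$ then the lower walk $y^L$ never reaches $L$; consequently $y^L$ experiences no top-blocks, and (by an easy induction) $y^L(t)\ge y^L(0)+S^L(t)$ for all $t\le T$, which at $t=T$ gives $y^L(T)\ge 1+(L-1)=L$, a contradiction. Your symmetric bookkeeping with $U_a,B_a,U_b,B_b$ and the gap identity $U_a+B_b=g(0)-g(T)$ is a valid alternative that treats both signs of $S^L(T)$ at once and even yields the marginally sharper bound $|S^L(T)|\le L-2$; the paper's route trades that symmetry for brevity by handling one sign and appealing to the obvious WLOG.
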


\begin{proof} Assume without loss of generality that
  $y^L(0) \le \tilde y^L(0)$ and $S^L(T) \ge L - 1$. Because
  $\sigma^1 = \tilde \sigma^1$ and $\sigma^2 = \tilde \sigma^2$ one
  has that $y^L(0) \le \tilde y^L(0)$ implies that
  $y^L(t) \le \tilde y^L(t)$ for all $t \ge 0$.

  Assume by contradiction that $y^L(t) \neq \tilde y^L(t)$ for all $t \le T$. If $y^L(t) = L$ for some $t \le T$ then $L = y^L(t) \le \tilde y^L(t) \le L$ which would contradict $y^L(t) \neq \tilde y^L(t)$ for all $t \le T$. Hence $y^L(t) < L$ for all $t \le T$.
  
  It can be shown by induction that $y^L(t) < L$ for all $t \le T$
  implies $y^L(t) \ge y^L(0) + S^L(t)$ for all $t \le T$. Hence
  $y^L(T) \ge y^L(0) + S^L(T) \ge L$ which is a contradiction.
\end{proof}

The following lemma establishes a crude quantitative result on the
variations of $I(t)$ and then uses the convergence of
$\frac{1}{\gamma_L} S^L(t)$ to $I(t)$ to deduce a result on the
variations of $S^L(t)$ for large $L$.

\begin{lem} \label{lem:variation_of_discrete_unbounded_process}

\begin{itemize}
\item[(i)] For all $\delta > 0$, one has
  $\liminf_{t \rightarrow +\infty} \inf_{\sigma} \mathbb P_{\sigma}
  \left( \left| I(t) \right| \ge \delta \right) \ge 1/3$.
\item[(ii)] There exist $T > 0$ and $L_0 \in \mathbb N$ such that, for
  all $L \ge L_0$, one has
  $\inf_{\sigma \in \Sigma} \mathbb P_{\sigma} \left( \left| S^L(T)
    \right| \ge L - 1 \right) \ge 1/9$.
\end{itemize}

\end{lem}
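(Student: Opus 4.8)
The two parts are of different nature: (i) is a purely intrinsic statement about the velocity process, while (ii) transfers it to $S^L$ through Lemma~\ref{lem:toy_problem}. I would therefore prove (i) first. The couple $(\sigma^1,\sigma^2)$ is an irreducible continuous-time Markov chain on the \emph{finite} set $\Sigma$, whose stationary law is invariant under the global sign flip $\sigma\mapsto-\sigma$, so that the bounded integrand $v(s)=\sigma^2(s)-\sigma^1(s)$ is centered in stationarity. The central limit theorem for additive functionals of a finite ergodic chain then gives $I(t)/\sqrt t \Rightarrow \mathcal N(0,s^2)$, and the decisive point is $s^2>0$. I would obtain positivity from the independence of $\sigma^1$ and $\sigma^2$, which yields $\operatorname{Var}(I(t))=\operatorname{Var}\big(\int_0^t\sigma^1(s)\,ds\big)+\operatorname{Var}\big(\int_0^t\sigma^2(s)\,ds\big)$, each summand being the variance of an integrated telegraph process, known to grow linearly with a strictly positive rate.

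To extract the constant $1/3$ I would apply the Paley--Zygmund inequality to the nonnegative variable $I(t)^2$. Since $\mathbb E_\sigma[I(t)^2]\sim s^2 t\to\infty$, the ratio $\theta_t:=\delta^2/\mathbb E_\sigma[I(t)^2]$ tends to $0$ and
\[
\mathbb P_\sigma\big(|I(t)|\ge\delta\big) \ge (1-\theta_t)^2\,\frac{\mathbb E_\sigma[I(t)^2]^2}{\mathbb E_\sigma[I(t)^4]}.
\]
The CLT together with uniform integrability of $(I(t)/\sqrt t)^4$ (itself a consequence of uniform boundedness of the higher moments of $I(t)/\sqrt t$ for a bounded functional of a finite chain) gives the Gaussian kurtosis $\mathbb E_\sigma[I(t)^4]/\mathbb E_\sigma[I(t)^2]^2\to 3$, so the right-hand side tends to $1/3$. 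As $\Sigma$ is finite, the infimum over $\sigma$ is a minimum of finitely many such limits, which settles (i). The only genuinely model-specific obstacle is the positivity $s^2>0$, handled by the telegraph decomposition above; everything else is classical for bounded functionals of finite ergodic chains.

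For (ii) I would combine (i) with Lemma~\ref{lem:toy_problem}, using $L-1=\gamma_L\ell$, so that $|S^L(T)|\ge L-1$ is equivalent to $|S^L(T)/\gamma_L|\ge\ell$. Apply (i) with $\delta=2\ell$ and fix $T$ large enough that $\inf_\sigma\mathbb P_\sigma(|I(T)|\ge 2\ell)\ge 2/9$, which is possible because the corresponding liminf is at least $1/3$. On the event $\{|I(T)|\ge 2\ell\}\cap\{\sup_{t\le T}|S^L(t)/\gamma_L-I(t)|<\ell\}$ the triangle inequality gives $|S^L(T)/\gamma_L|>\ell$, hence $|S^L(T)|\ge L-1$. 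A union bound and Lemma~\ref{lem:toy_problem} with $\epsilon=\ell$, whose bound $\tfrac{2}{\ell}\sqrt{T/\gamma_L}$ does not depend on the initial velocity, then yield
\[
\inf_\sigma\mathbb P_\sigma\big(|S^L(T)|\ge L-1\big) \ge \frac{2}{9}-\frac{2}{\ell}\sqrt{\frac{T}{\gamma_L}}.
\]
Since $\gamma_L=(L-1)/\ell\to\infty$, choosing $L_0$ so that the error term is at most $1/9$ for all $L\ge L_0$ gives the claim. The factor $2$ in $\delta=2\ell$ is precisely what absorbs the approximation error of width $\ell$, and $1/9=2/9-1/9$ leaves the needed room.
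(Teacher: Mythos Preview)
Your proof is correct and, for part~(ii), essentially identical to the paper's: both fix $T$ from~(i) so that $\inf_\sigma\mathbb P_\sigma(|I(T)|\ge\delta)\ge 2/9$, then use Lemma~\ref{lem:toy_problem} to make the discrepancy $|S^L(T)/\gamma_L-I(T)|$ small enough for large $L$, and subtract. The paper chooses $\delta=\ell+1$ and error threshold $1$, you choose $\delta=2\ell$ and threshold $\ell$; both choices work.

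For part~(i) the routes differ. The paper computes the moment generating function of $I_k(t)=\int_0^t\sigma^k(s)\,ds$ explicitly via a Feynman--Kac matrix exponential, differentiates at $\zeta=0$ to obtain the exact asymptotics $\mathbb E[I_k(t)^2]/t\to 1/\omega$ and $\mathbb E[I_k(t)^4]/t^2\to 3/\omega^2$ (in the DITP case), combines them through the independence of $I_1,I_2$, and then applies Paley--Zygmund just as you do. Your argument replaces this explicit calculation by the functional CLT for bounded, centered additive functionals of a finite ergodic chain, together with uniform integrability of $(I(t)/\sqrt t)^4$ to upgrade weak to moment convergence and recover the Gaussian kurtosis $3$. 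This is more conceptual and covers both the DITP and DFTP rates at once without separate computations, at the cost of invoking two standard but nontrivial facts (the CLT with $s^2>0$, and $\sup_t\mathbb E[(I(t)/\sqrt t)^{p}]<\infty$ for some $p>4$). The paper's approach is entirely self-contained but model-specific; yours is portable to other finite-state velocity dynamics. One small remark: in the DFTP case $\sigma^i$ is not literally a two-state telegraph process, so your sentence ``each summand being the variance of an integrated telegraph process'' should be read as the analogous statement for the three-state chain, which is equally standard.
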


\begin{proof} (i) For the sake of conciseness, only consider the
  case of the DITP. The same arguments apply up to some slight
  modifications to the DFTP.
  
  Set $I_k(t) = \int_0^t \sigma^k(s) ds$ for $k = 1, 2$. By
  Feynman-Kac formulae,
$$
\begin{pmatrix}
\mathbb E_{-1} \left[ e^{\zeta I_k(t)} \right] \\ \mathbb E_{1} \left[ e^{\zeta I_k(t)} \right] \\
\end{pmatrix} = e^{t(\mathfrak Q + \zeta V)}\begin{pmatrix}
1 \\ 1
\end{pmatrix} \text{ with } \mathfrak Q = \begin{pmatrix}
-\omega & \omega \\ \omega & -\omega
\end{pmatrix}
\text{ and }
V = \begin{pmatrix}
-1 & 0 \\ 0 & 1
\end{pmatrix}.
$$
Hence one gets
$$\mathbb E_{\pm1} \left[ e^{\zeta I_k(t)} \right] = e^{-t \omega} \left(
  \frac{(\omega \pm\zeta ) \sinh \left(t \sqrt{\zeta ^2+\omega
        ^2}\right)}{\sqrt{\zeta ^2+\omega ^2}}+\cosh \left(t
    \sqrt{\zeta^2+\omega ^2}\right) \right).$$ For $k = 1, 2$ and
$\sigma^k_0 \in \{-1, 1\}$, differentiating w.r.t.~\( \zeta \) at \( \zeta = 0 \) and taking
$t \rightarrow +\infty$ yields
\begin{align*}
\lim_{t \rightarrow +\infty} \frac{\mathbb{E}_{\sigma^k_0} \left[ I_k(t) \right]}{t^{1/2}} &= 0, &  \lim_{t \rightarrow +\infty} \frac{\mathbb{E}_{\sigma^k_0} \left[ I_k(t)^2 \right]}{t} &= \frac{1}{\omega},\\
\lim_{t \rightarrow +\infty} \frac{\mathbb{E}_{\sigma^k_0} \left[ I_k(t)^3 \right]}{t^{3/2}} &= 0, & \lim_{t \rightarrow +\infty} \frac{\mathbb{E}_{\sigma^k_0} \left[ I_k(t)^4 \right]}{t^2} &= \frac{3}{\omega^2}.
\end{align*}
Using $I(t) = I_2(t) - I_1(t)$ and the independence of $I_1(t)$ and
$I_2(t)$ one sees that for all $\sigma \in \Sigma$
$$
\lim_{t \rightarrow +\infty} \mathbb E_{\sigma} \left[I(t)^2\right] = +\infty \text{ and } \lim_{t \rightarrow +\infty} \frac{\mathbb E_{\sigma} \left[I(t)^2\right]^2}{\mathbb E_{\sigma} \left[I(t)^4\right]} = \frac{1}{3}.
$$
Finally, when $t$ is large enough
$\frac{\delta^2}{\mathbb E \left[I(t)^2\right]} \in (0, 1)$ so by the
Paley-Zygmund inequality
\begin{align*}
\mathbb P_{\sigma} \left( \left| I(t)\right| \ge \delta \right) = \mathbb P_{\sigma} \left( I(t)^2 \ge \frac{\delta^2}{\mathbb E_\sigma \left[I(t)^2\right]} \mathbb E_\sigma \left[I(t)^2\right] \right) \ge \left( 1 - \frac{\delta^2}{\mathbb E_\sigma \left[I(t)^2\right]} \right)^2 \frac{\mathbb E_\sigma \left[I(t)^2\right]^2}{\mathbb E_\sigma \left[I(t)^4\right]}
\end{align*}
and the result follows.

(ii) Assertion (i) implies that there exists $T > 0$ such that
$ \inf_{\sigma \in \Sigma} \mathbb{P}_{\sigma} \left( \left| I(T)
  \right| \ge \ell + 1\right) \ge \frac{2}{9} $ and by proposition
\ref{lem:toy_problem} there exists $L_0 \in \mathbb N$ such that for
all $L \ge L_0$
$$
\sup_{\sigma \in \Sigma} \mathbb P_{\sigma} \left(  \left| \frac{\ell}{L - 1} S^L(T) - I(T) \right| \ge 1 \right) \le \frac{1}{9}.
$$
Hence for $L \ge L_0$ and $\sigma \in \Sigma$
\begin{align*}
  \mathbb P_\sigma \left( \left| S^L(t) \right| \ge L - 1 \right) \ge \mathbb P_\sigma \left( \left| I(T) \right| \ge \ell+ 1 \right) - \mathbb P_\sigma \left( \left| \frac{\ell}{L - 1} S^L(T) - I(T)\right| \ge 1 \right) \ge \frac{2}{9} - \frac{1}{9}.
\end{align*}
\end{proof}

The key uniformity result can now be proven.

\begin{proof}[Proposition~\ref{prop:uniform_discrete_mixing}]
  For all $L \ge 2$ there exist $C_L, \rho_L > 0$ such that
  $\sup_{\mu_L} \left\| \mu_L P^L_t - \pi_L \right\|_\text{TV} \le C_L
  e^{-\rho_L t}$ so it is enough to show that there exists
  $L_0 \in \mathbb N$ such that
  $\sup_{L \ge L_0} \sup_{\mu_L} \left\| \mu_L P^L_t - \pi_L
  \right\|_\text{TV} \le C e^{-\rho t}$ for some $C, \rho>0$.
  
  Lemma \ref{lem:variation_of_discrete_unbounded_process} guarantees
  that there exist $T > 0$ and $L_0 \in \mathbb N$ such that for all
  $L \ge L_0$
$$
\inf_{\sigma \in \Sigma} \mathbb P_{\sigma} \left( \left| S^L(T) \right| \ge L - 1 \right) \ge \frac{1}{9}.
$$

Recall
$\tau_i = \inf \{ t \ge 0 : \sigma^i(t) = \tilde \sigma^i(t) \}$,
$\tau_\sigma = \max_{i = 1, 2} \tau_i$ and
$\tau_y = \inf\{t \ge \tau_\sigma: y^L(t) = \tilde y^L(t)\}$. Set
$$
p^* = \inf_{\sigma^1_0, \tilde \sigma^1_0} \mathbb P_{(\sigma^1_0, \tilde \sigma^1_0)} \left( \sigma^1(1) = \tilde \sigma^1(1)\right).
$$

For all $k \in \mathbb N$ and $L \ge L_0$
\begin{align*}
\sup_{\mu_L} \left\| \mu_L P^L_{(T + 1)k} - \pi_L \right\|_\text{TV} &\le  \sup_{Y^L_0, \tilde Y^L_0} \mathbb P_{Y^L_0, \tilde Y^L_0} \left( Y((T + 1)k) \neq \tilde Y^L((T + 1)k) \right), \\
&\le  \sup_{Y^L_0, \tilde Y^L_0} \mathbb P_{Y^L_0, \tilde Y^L_0} \left(  \tau_y > (T + 1)k\right), \\
&\le \sup_{Y^L_0, \tilde Y^L_0} \left[\mathbb P_{Y^L_0, \tilde Y^L_0} \left( \tau_y - \tau_\sigma > k T \right) + \mathbb P_{Y^L_0, \tilde Y^L_0} \left( \tau_1 > k \right) + \mathbb P_{Y^L_0, \tilde Y^L_0} \left( \tau_2 > k \right)\right], \\
&\le \left( 8/9 \right)^k + (1 - p^*)^k + (1 - p^*)^k,
\end{align*}
where  lemma
\ref{lem:deterministic_discrete_coupling_lemma} and
\ref{lem:variation_of_discrete_unbounded_process} are iterated to bound
$\mathbb P \left( \tau_y - \tau_\sigma > k T \right)$ by $(8/9)^k$.

This concludes the proof since
$t \mapsto \sup_{L \ge L_0} \sup_{\mu_L} \left\| \mu_L P^L_{t} - \pi_L
\right\|_\text{TV}$ is decreasing.
\end{proof}

\section{Mixing times} \label{sec:mixing_time_of_the_continuous_process}

In this section, the dependence of the mixing time
$$
t_\mix(\epsilon) = \inf \left\{ t \ge 0 : \sup_{\mu} \, \tv{\mu P_t - \pi} \le \epsilon \right\}
$$
of the continuous process on the model parameters $\omega$
(resp. $\alpha$ and $\beta$) and $\ell$ is determined up to a multiplicative
constant. Denote by $t_\mix^\IT(\epsilon)$
(resp. $t_\mix^\FTD(\epsilon)$) the mixing time of the CITP
(resp. CFTP).

\begin{thm}[Mixing time] \label{thm:mixing_time} \begin{itemize}
  \item[(i)] For all $\epsilon \in (0, 1)$ there exists
    $C(\epsilon), C'(\epsilon) > 0$ such that
$$
C(\epsilon) \frac{1}{\omega} \left(1 + \omega^2 \ell^2\right) \le t_\mix^\IT(\epsilon) \le {C'(\epsilon)}\frac{1}{\omega} \left(1 + \omega^2 \ell^2\right).
$$
\item[(ii)] For all $\epsilon \in (0, 1)$ there exists $C(\epsilon), C'(\epsilon) > 0$ such that
$$
C(\epsilon) \left( \frac{1}{\alpha} + \frac{1}{\beta} \right) \left(1 + \alpha^2 \ell^2 \right) \le t_\mix^\FTD(\epsilon) \le {C'(\epsilon)} \left( \frac{1}{\alpha} + \frac{1}{\beta} \right) \left(1 + \alpha^2 \ell^2\right).
$$
\end{itemize}
\end{thm}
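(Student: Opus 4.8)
The plan is to prove the matching upper and lower bounds separately, each capturing the persistent regime ($\omega\ell\ll1$) and the diffusive regime ($\omega\ell\gg1$); I describe the CITP, the CFTP being identical once $1/\omega$ is replaced by the velocity time scale $1/\alpha+1/\beta$ and the diffusion constant is governed by $\alpha$. For the upper bound I would start from the coupling characterization $\sup_\mu\tv{\mu P_t-\pi}\le\sup_{X_0,\tilde X_0}\mathbb P_{X_0,\tilde X_0}(X(t)\neq\tilde X(t))$ and build an explicit coalescent coupling of two copies $X=(x,\sigma)$ and $\tilde X=(\tilde x,\tilde\sigma)$. As in Definition~\ref{def:discrete_discrete_coupling}, the velocities are run independently until they first coincide at a stopping time $\tau_\sigma$, after which they are identified; both positions are then driven by the common velocity through the trajectory construction of Definition~\ref{def:continuous_process_min_max_construction}. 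The key structural fact is \emph{order preservation}: once $\sigma=\tilde\sigma$, the clipped map $u\mapsto\max[0,\min[\ell,u]]$ is monotone, so $x(\tau_\sigma)\le\tilde x(\tau_\sigma)$ forces $x(t)\le\tilde x(t)$ with the gap $\tilde x(t)-x(t)$ non-increasing. Since $0\le x\le\tilde x$, the first instant the upper copy reaches the value $x=0$ forces $x=\tilde x=0$ with identical velocities, hence coalescence. The coupling time $\tau_\star$ is therefore dominated by $\tau_\sigma$ plus the hitting time of $\{x=0\}$ from the worst starting state, reducing the whole problem to $\sup_{x,\nu}\mathbb E_{(x,\nu)}[\tau_0]$.

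The heart of the upper bound is then the sharp estimate $\sup_{x,\nu}\mathbb E_{(x,\nu)}[\tau_0]=O\big(\tfrac1\omega(1+\omega^2\ell^2)\big)$ of Lemma~\ref{lem_maximum_hitting_time_of_0}. I would obtain it by writing the expected hitting time $h(x,\nu)=\mathbb E_{(x,\nu)}[\tau_0]$ as the solution of the boundary value problem $\mathcal Lh=-1$ on $\{x>0\}$ with $h=0$ on $\{x=0\}$, using the PDMP generator of Proposition~\ref{prop:generator_continuous_process}. This is a linear system of first-order ODEs in $x$ coupling the velocity sectors through the jump matrix $\mathcal Q$; solving or bounding it produces the two contributions, the persistent term $1/\omega$ (time to acquire a velocity pointing toward the boundary) and the diffusive term $\omega\ell^2$ (consistent with the variance growth $\mathbb E[I(t)^2]\sim t/\omega$ established in the proof of Lemma~\ref{lem:variation_of_discrete_unbounded_process}). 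Since $\mathbb E[\tau_\sigma]=O(1/\omega)$ for the finite-state velocity coupling, one gets a uniform bound $\sup_{X_0,\tilde X_0}\mathbb E[\tau_\star]\le H:=C\,\tfrac1\omega(1+\omega^2\ell^2)$; Markov's inequality together with the strong Markov property and a restart over starting states then upgrades this to $\sup_\mu\tv{\mu P_t-\pi}\le2^{-\lfloor t/(2H)\rfloor}$, whence $t_\mix^\IT(\epsilon)\le C'(\epsilon)H$.

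For the lower bound I would exhibit two independent obstructions and combine them through $\tfrac1\omega(1+\omega^2\ell^2)=\tfrac1\omega+\omega\ell^2\le2\max(\tfrac1\omega,\omega\ell^2)$. The first is the relaxation of the velocity marginal: since $\sigma=p_\sigma(X)$ is a deterministic function of the state, $\tv{\mu P_t-\pi}$ dominates the total variation of the velocity chain, which has spectral gap of order $\omega$, so starting from a fixed $\sigma_0$ the quantity $\mathbb P_{\sigma_0}(\sigma(t)=\sigma_0)$ stays bounded away from its stationary value until $t\gtrsim1/\omega$, giving $t_\mix^\IT(\epsilon)\ge c(\epsilon)/\omega$. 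The second, diffusive obstruction starts from $x_0=\ell/2$ with $\sigma_0$ fixed: as long as no clipping occurs one has exactly $x(s)=\ell/2+I(s)$, so on the event $\{\sup_{s\le t}|I(s)|<\ell/4\}$ the position remains in $(\ell/4,3\ell/4)$. Controlling $I(t)=\int_0^t(\sigma^2-\sigma^1)\,ds$ through the Poisson-equation martingale corrector of the velocity chain and Doob's maximal inequality yields $\mathbb E[\sup_{s\le t}|I(s)|^2]\lesssim t/\omega$, hence $\mathbb P(\sup_{s\le t}|I(s)|\ge\ell/4)\lesssim t/(\omega\ell^2)$. Since the explicit invariant measure of the appendix places a mass bounded below outside $[\ell/4,3\ell/4]$ (in particular charging the jamming sets at $0$ and $\ell$, cf.\ point~(3) of Theorem~\ref{thm:summary}), the law of $x(t)$ cannot match $\pi$ while $t\le c\,\omega\ell^2$, giving $t_\mix^\IT(\epsilon)\ge c(\epsilon)\,\omega\ell^2$.

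I expect the main obstacle to be the \emph{sharp two-sided} control required for optimality rather than mere exponential ergodicity. On the upper side, the difficulty is solving the coupled ODE boundary value problem for $h(x,\nu)$ finely enough that the prefactors of both the $1/\omega$ and the $\omega\ell^2$ terms are correct uniformly in the starting state, the interplay between the jamming boundary conditions and the velocity jumps being exactly what makes the system non-trivial. On the lower side, the crux is establishing the maximal concentration inequality for $I(t)$ with the correct diffusive variance scaling $t/\omega$ (rather than the crude bound $|I(t)|\le2t$), since only this sharp scaling makes the diffusive lower bound $\omega\ell^2$ match the upper bound up to the constant $C(\epsilon)$.
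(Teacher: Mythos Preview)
Your proposal is correct and follows the same overall architecture as the paper: coupling plus order preservation reduces the upper bound to a hitting-time estimate, and the lower bound splits into a velocity obstruction and a diffusive concentration obstruction for $I(t)$. The CITP upper bound via solving $\mathcal L h=-1$ is exactly Lemma~\ref{lem_maximum_hitting_time_of_0}.

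Two points where the paper diverges from your sketch are worth flagging. First, for the CFTP upper bound the paper does \emph{not} attempt the nine-state analogue of the ODE boundary value problem (which would be unwieldy); instead it proves a deterministic lemma (Lemma~\ref{lem_crucial_deterministic_lemma}) showing that coalescence occurs as soon as $\sup_{t_1\le t_2\le T}\big|\int_{t_1}^{t_2}(\sigma^2-\sigma^1)\,ds\big|\ge\ell$, and then (Lemma~\ref{lem_variation_lower_bound}) uses return-time decomposition and Paley--Zygmund to show this happens with probability $\ge 1/144$ within time $t^*=288(1/\alpha+1/\beta)(1+\alpha^2\ell^2)$, which is then iterated. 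Your ODE route would in principle work but the variation approach is what makes the CFTP case tractable. Second, for the diffusive lower bound the paper's concentration argument (Lemma~\ref{lem_mixing_lower_bound_concentration}) also proceeds via return times to $\{\sigma^1=0\}$ plus Kolmogorov's inequality rather than the Poisson-equation martingale corrector you propose; both yield the correct $t/\omega$ (resp.\ $t\beta/(\alpha(\alpha+\beta))$) variance scaling, so this is a genuine but harmless methodological alternative. One small caution: your lower-bound invocation of ``mass at the jamming sets'' is not what makes $\pi(M^c)$ uniformly large, since the Dirac weights are $O((2+\omega\ell)^{-1})$; the paper instead uses that the Lebesgue part of $\pi$ is uniform in $x$, giving $\pi(M)\le a$ directly.
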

By standard arguments, e.g.~\cite{peres09}, one has that
$C'(\epsilon)\sim \log(1/\epsilon)$. However the expression for
$C(\epsilon)$ is more involved. Notice that matching lower
and upper bounds are obtained meaning that the dependence on the model parameters
is optimal. 
The remainder of this section is dedicated to the proof of theorem
\ref{thm:mixing_time}. The lower and upper bound of
assertions (i) and (ii) are proven separately.

The exact forms of the invariant measure of the CITP and the CFTP are
used during the proofs of the theorems, noticeably the fact that they
exhibit Dirac masses at $0$ and $\ell$. While derived in the discrete
case in \cite{slowman16,slowman17}, universality classes have been
obtained in \cite{hahn23} characterizing the exact form of the
invariant measure in the continuous case in a general setting. For the
sake of completeness, the invariant measure for the inter-particle
separation for the CITP is explicited in Proposition
\ref{prop_invariant_measure_slowman_2016} and for the CFTP in
Proposition \ref{prop_invariant_measure_slowman_2017}.

\subsection{Lower bound}

The lower bounds in assertions (i) and (ii) of theorem
\ref{thm:mixing_time} can be shown using identical arguments based on
the identification of the slow observables. For the sake of conciseness, only the lower
bound of assertion (ii) is proven. In
the following lemma, a first lower bound is deduced by looking at the
velocities.

\begin{lem} \label{lem_mixing_constant_lower_bound_slowman_2017}
	For all $\epsilon \in (0, 1)$ one has
	$$
	\frac{|\log(\epsilon)|  }{2\min \left(\alpha, \beta\right)} \le t_\mix^\FTD(\epsilon).
	$$
\end{lem}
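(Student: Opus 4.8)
The plan is to produce a distinguishing event of vanishing stationary mass that the process retains with high probability for a time of order $1/\min(\alpha,\beta)$. Since for any measurable $A$ one has the elementary lower bound $\tv{\mu P_t - \pi} \ge (\mu P_t)(A) - \pi(A)$, it suffices to exhibit a starting configuration and a set $A$ with $\pi(A)=0$ and $(\mu P_t)(A)$ bounded below by $e^{-2\min(\alpha,\beta)t}$. The mechanism I would exploit is that whenever the two velocities coincide the relative velocity $\sigma^2-\sigma^1$ vanishes, so the inter-particle separation $x$ is frozen and can only move again once one of the velocities jumps. Hence the true obstruction to mixing, as announced, is read off the velocities: it is the time the velocity pair needs to leave an aligned configuration, which is governed by the slowest single-velocity exit rate $\min(\alpha,\beta)$.

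Concretely, let $s^\star \in \{-1,0,1\}$ realise the slowest exit rate of the single-velocity chain of Figure~\ref{fig:single_speed_transition_rates_slowman_2017}, i.e.\ exit rate $\alpha$ from $\pm 1$ and $\beta$ from $0$, so that leaving $s^\star$ occurs at rate $\min(\alpha,\beta)$. Fix any interior point $x_0 \in (0,\ell)$ that is not an atom of $\pi$; such a point exists because $\pi$ has at most countably many atoms (consistently with the explicit invariant measure of Proposition~\ref{prop_invariant_measure_slowman_2017}, whose only atoms sit at $0$ and $\ell$). I would then start the CFTP from $\mu = \delta_{(x_0,(s^\star,s^\star))}$ and take $A = \{x_0\}\times\Sigma$. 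On the event that neither velocity has jumped before time $t$, both velocities stay equal to $s^\star$, the relative velocity stays $0$, and $x$ remains frozen at the interior point $x_0$ with no boundary interaction; in particular the state stays in $A$. As the two velocity clocks are independent, each with rate $\min(\alpha,\beta)$, this no-jump event has probability $e^{-2\min(\alpha,\beta)t}$, whence $(\mu P_t)(A) \ge e^{-2\min(\alpha,\beta)t}$ while $\pi(A)=0$.

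Combining these bounds yields $\sup_{\mu}\tv{\mu P_t - \pi} \ge e^{-2\min(\alpha,\beta)t}$. For every $t < |\log(\epsilon)|/(2\min(\alpha,\beta))$ one has $e^{-2\min(\alpha,\beta)t} > \epsilon$, so such a $t$ cannot lie in the set defining $t_\mix^{\FTD}(\epsilon)$, giving the claimed lower bound $t_\mix^{\FTD}(\epsilon) \ge |\log(\epsilon)|/(2\min(\alpha,\beta))$. The crux is really the insight that \emph{aligned, not-yet-reshuffled velocities} form the slow observable; once that is identified, the only points needing care are bookkeeping: verifying that with coinciding velocities and $x_0$ interior the Davis flow leaves $x$ unchanged (so that the sole way to exit $A$ is a velocity jump), and choosing $x_0$ off the countable atom set of $\pi$ so that $\pi(A)=0$ exactly. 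The factor $2$ in the rate is simply the combined exit rate of the two independent aligned velocities.
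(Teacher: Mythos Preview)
Your proof is correct and follows essentially the same approach as the paper: start from an aligned-velocity state $(x_0,(s^\star,s^\star))$ with $x_0$ in the interior, use that the no-jump event freezes the process at a point of zero $\pi$-mass, and read off the bound $e^{-2\min(\alpha,\beta)t}$ from the combined exit rate. The only cosmetic difference is that the paper takes the distinguishing set to be the singleton $\{X_0\}$ rather than your $\{x_0\}\times\Sigma$, and invokes the explicit form of $\pi$ directly rather than the countable-atom argument; both choices are equivalent here.
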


\begin{proof}
  If $t \ge 0$ is such that
  $\sup_{\mu} \, \tv{\mu P_{t} - \pi} \le \epsilon$, then, for any
  $X_0 = (x, (\sigma^1_0, \sigma^2_0))\in ]0,\ell[\times\Sigma$,
$$
\mathbb P_{X_0} \left(X(t) = X_0\right) - \pi(X_0) = \mathbb P_{X_0} \left(X(t) = X_0\right) \le \epsilon,
$$
as there is no mass in any such state $X_0$ in the explicit formula for
$\pi$ (see Proposition
\ref{prop_invariant_measure_slowman_2017}). In particular, for $\sigma^1_0=\sigma^2_0$,
$$
\epsilon\geq \sup_{\sigma^1_0}\mathbb P_{X_0} \left(X(t) = X_0 \right) \ge  \sup_{\sigma^1_0}\mathbb P_{X_0} \left(\sigma^1(s) = \sigma^2(s) \text{ for all } s \le t \right) \geq e^{-2 \min(\alpha,\beta) t}. $$
Thus, $$ t_\mix^\FTD (\epsilon) \ge \frac{|\log(\epsilon)|}{2\min(\alpha,\beta)}
$$ and the
result follows.

\end{proof}

Exploring a positive fraction of the state space requires a change in
position of order $\ell$. The two following lemmas confirm that, in the
diffusive regime, this takes order $\ell^2$ time
leading to a lower bound with quadratic dependence on $\ell$.

\begin{lem} \label{lem_mixing_lower_bound_concentration} Let
  $\sigma^1$ and $\sigma^2$ be two independent Markov processes with
  the transition rates of figure
  \ref{fig:single_speed_transition_rates_slowman_2017}.
	
  For all $\epsilon > 0$ there exists $u(\epsilon), A(\epsilon) > 0$
  such that for all $\delta > 0$, choosing
  $t^* = u(\epsilon) \left(1 + \frac{\alpha}{\beta}\right) \alpha
  \delta^2$,
	$$
	\alpha \delta \ge A(\epsilon) \implies \max_{\sigma \in
          \Sigma} \mathbb{P}_{\sigma} \left( \sup_{0 \le t \le t^*}
          \left| \int_0^t (\sigma^2(s) - \sigma^1(s)) ds \right|>
          \delta \right) \le \epsilon.
	$$
\end{lem}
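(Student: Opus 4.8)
The plan is to linearize the integrated velocity difference through a martingale decomposition and then apply a maximal inequality. First I would note that for the single-velocity generator $\mathcal Q$ of figure~\ref{fig:single_speed_transition_rates_slowman_2017}, the function $f(\sigma)=\sigma$ is an eigenfunction: a direct check gives $\mathcal Q f=-\alpha f$. Dynkin's formula then shows that
$$
M^k(t) := \sigma^k(t) - \sigma^k(0) + \alpha\int_0^t \sigma^k(s)\,ds, \qquad k = 1, 2,
$$
are martingales with $M^k(0)=0$. Writing $I(t)=\int_0^t(\sigma^2-\sigma^1)\,ds$ and $M(t)=M^2(t)-M^1(t)$, this gives the identity $\alpha I(t)=M(t)-(\sigma^2(t)-\sigma^1(t))+(\sigma^2(0)-\sigma^1(0))$, so that, since velocities are bounded by $1$ in absolute value,
$$
\sup_{t\le t^*}\left|I(t)\right|\le \frac1\alpha\Bigl(\sup_{t\le t^*}\left|M(t)\right| + 4\Bigr).
$$
Taking $A(\epsilon)\ge 8$, the hypothesis $\alpha\delta\ge A(\epsilon)$ forces $\alpha\delta-4\ge\alpha\delta/2$, and the event to control reduces to $\{\sup_{t\le t^*}|M(t)|>\alpha\delta/2\}$.

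Next I would estimate the second moment of $M$ via its predictable quadratic variation. As $M$ is a difference of two independent Dynkin martingales, $\langle M\rangle_t=\int_0^t\bigl(\Gamma(\sigma^1(s))+\Gamma(\sigma^2(s))\bigr)\,ds$ with carré du champ $\Gamma=\mathcal Q f^2-2f\mathcal Q f$, which one computes to equal $\alpha$ on $\{\pm1\}$ and $\beta$ on $\{0\}$. The crude pointwise bound $\Gamma\le\max(\alpha,\beta)$ is \emph{not} sharp enough, as it would destroy the intended scaling of $t^*$. The key is instead to exploit the equilibration of the fraction of time spent tumbling. Writing $p_0(s)=\mathbb P_\sigma(\sigma^k(s)=0)$ and using $\Gamma=\alpha+(\beta-\alpha)\mathbf 1_{\{0\}}$, one has $\mathbb E_\sigma[\langle M^k\rangle_{t^*}]=\alpha t^*+(\beta-\alpha)\int_0^{t^*}p_0(s)\,ds$. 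The three-state velocity chain has eigenvalues $0,-\alpha,-(\alpha+\beta)$, and $p_0$ is a symmetric observable, so $|p_0(s)-\pi(0)|\le e^{-(\alpha+\beta)s}$ with $\pi(0)=\alpha/(\alpha+\beta)$, uniformly in the starting state. Integrating and summing over $k$ yields the uniform bound
$$
\max_{\sigma\in\Sigma}\mathbb E_\sigma\!\left[\langle M\rangle_{t^*}\right]\le \frac{4\alpha\beta}{\alpha+\beta}\,t^* + 2.
$$

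Then I would conclude by Doob's $L^2$ maximal inequality together with Chebyshev's inequality. Doob gives $\mathbb E[\sup_{t\le t^*}M(t)^2]\le 4\,\mathbb E[\langle M\rangle_{t^*}]\le \tfrac{16\alpha\beta}{\alpha+\beta}t^*+8$. Substituting $t^*=u(\epsilon)(1+\alpha/\beta)\alpha\delta^2=u(\epsilon)\tfrac{\alpha(\alpha+\beta)}{\beta}\delta^2$ makes the $\beta$ and $(\alpha+\beta)$ factors cancel, leaving $\mathbb E[\sup_{t\le t^*}M(t)^2]\le 16u(\epsilon)\alpha^2\delta^2+8$. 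Hence
$$
\max_{\sigma\in\Sigma}\mathbb P_\sigma\Bigl(\sup_{t\le t^*}|M(t)|>\tfrac{\alpha\delta}{2}\Bigr)\le \frac{16u(\epsilon)\alpha^2\delta^2+8}{\alpha^2\delta^2/4}=64u(\epsilon)+\frac{32}{\alpha^2\delta^2}\le 64u(\epsilon)+\frac{32}{A(\epsilon)^2}.
$$
Choosing $u(\epsilon)=\epsilon/128$ and $A(\epsilon)=\max(8,\,8/\sqrt\epsilon)$ makes each term at most $\epsilon/2$, which gives the claim.

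The main obstacle is the second paragraph: obtaining the quadratic-variation bound with the correct prefactor $\alpha\beta/(\alpha+\beta)$ rather than the loose $\max(\alpha,\beta)$, and doing so \emph{uniformly} over all deterministic initial velocities as required by the $\max_{\sigma\in\Sigma}$. This is precisely what forces one to quantify the relaxation of the occupation of the tumbling state $0$ toward equilibrium, governed by the spectral gap $\alpha+\beta$ of the velocity chain; the transient contributes only an additive $O(1)$ term, harmlessly absorbed once $\alpha\delta\ge A(\epsilon)$ is large.
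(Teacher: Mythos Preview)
Your proof is correct and takes a genuinely different route from the paper. The paper reduces to a single velocity, decomposes $\int_0^t \sigma^1(s)\,ds$ into an initial segment up to the first return to $0$ plus a sum of i.i.d.\ excursion increments $D_i=\int_{\tau_{i-1}}^{\tau_i}\sigma^1(s)\,ds$, and then splits the event into three pieces controlled respectively by Markov's inequality, Kolmogorov's maximal inequality for i.i.d.\ sums, and Chebyshev's inequality for the renewal count. You instead exploit the algebraic accident that $\sigma\mapsto\sigma$ is an eigenfunction of the single-velocity generator with eigenvalue $-\alpha$, solve the Poisson equation exactly, and obtain the Dynkin martingale representation $\alpha I(t)=M(t)+O(1)$; then a single application of Doob's $L^2$ maximal inequality, combined with the carr\'e-du-champ computation and the spectral relaxation of $p_0(s)$ toward $\alpha/(\alpha+\beta)$, gives the right prefactor $\alpha\beta/(\alpha+\beta)$ in $\mathbb E[\langle M\rangle_{t^*}]$. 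Your argument is shorter, avoids the three-term split and the renewal bookkeeping, and yields explicit constants $u(\epsilon)=\epsilon/128$, $A(\epsilon)=\max(8,8/\sqrt\epsilon)$. The paper's approach, on the other hand, is slightly more elementary in that it only relies on i.i.d.\ tools and does not invoke the predictable quadratic variation or the spectral structure of the velocity chain; it would also generalize more readily to tumbling mechanisms for which no convenient eigenfunction of the generator is available.
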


\begin{proof} One has
	\begin{align*}
          \mathbb{P}_{\sigma} \left( \sup_{0 \le t \le t^*} \left| \int_0^t (\sigma^2(s) - \sigma^1(s)) ds \right| > \delta \right) \le 2\, \mathbb{P}_{\sigma^1_0} \left( \sup_{0 \le t \le t^*} \left| \int_0^t \sigma^1(s) ds \right| > \delta / 2\right)
	\end{align*}
	so that it suffices to prove that for all $\epsilon \in (0,
        1)$ there exist $u(\epsilon), A(\epsilon) > 0$ such that for
        $t^* = u(\epsilon) \left(1 + \frac{\alpha}{\beta}\right) \alpha
        \delta^2$
	\begin{align*}
	\alpha\delta \ge A(\epsilon) \implies \max_{\sigma^1_0 = -1, 0, 1} \, \mathbb{P}_{\sigma^1_0} \left( \sup_{0 \le t \le t^*} \left| \int_0^t \sigma^1(s) ds \right| > \delta\right) \le \epsilon.
	\end{align*}
	Define $\tilde \tau_{-1} = 0$ as well as
        $\tau_n = \inf \{ t \ge \tilde \tau_{n-1} : \sigma^1(t) = 0
        \}$ and
        $\tilde \tau_n = \inf \{ t \ge \tau_{n} : \sigma^1(t) \neq
        0\}$ for $n \ge 0$. Denote
        $D_i = \int_{\tau_{i-1}}^{\tau_i} \sigma^1(s) ds$ and observe
        that
	$$
	\sup_{t \in [0, \tau_n]} \left| \int_0^t \sigma^1(s) ds
        \right| = \max_{0 \le k \le n} \left| \int_0^{\tau_k}
          \sigma^1(s) ds \right| \le \left| \int_0^{\tau_0}
          \sigma^1(s) ds \right| + \max_{1 \le k \le n} \left| \sum_{i
            = 1}^k D_i \right|.
	$$	
	Set $n = \lceil 2 u \alpha^2 \delta^2 \rceil$ and $t^* = u \left(1 + \frac{\alpha}{\beta}\right)\alpha\delta^2$. One has
	\begin{align*}
          \mathbb{P}_{\sigma^1_0} \left( \sup_{0 \le t \le t^*} \left| \int_0^t \sigma^1(s) ds \right|> \delta \right) &\le \mathbb{P}_{\sigma^1_0} \left( \left| \int_0^{\tau_0} \sigma^1(s) ds \right| > \delta/2 \text{ or } \max_{1 \le k \le n} \left| \sum_{i = 1}^k D_i \right| > \delta/2 \text{ or } t^* > \tau_n \right) \\
                                                                                                                       &\le \underbrace{\mathbb{P}_{\sigma^1_0} \left( \left| \int_0^{\tau_0} \sigma^1(s) ds \right| > \delta/2\right)}_{\text{(I)}} + \underbrace{\mathbb{P}_{\sigma^1_0} \left(\max_{1 \le k \le n} \left| \sum_{i = 1}^k D_i \right| > \delta/2\right)}_{\text{(II)}} \\
                                                                                                                       &\qquad\qquad+ \underbrace{\mathbb{P}_{\sigma^1_0} \left( t^* > \tau_n \right)}_{\text{(III)}}.
	\end{align*}
	
	It remains to show that there exist $u = u(\epsilon) > 0$ and
        $A = A(\epsilon) > 0$ such that each of the terms (I)--(III)
        is bounded by $\epsilon /3$ if $\alpha \delta \ge A$.
	
	\underline{Bounding (I).} If $\sigma^1_0 = 0$ then (I) is zero. If $\sigma^1_0 = \pm 1$ then Markov's inequality yields
	$$
	\mathbb{P}_{\sigma^1_0} \left( \left| \int_0^{\tau_0} \sigma^1(s) ds \right| > \delta/2\right) \le \frac{\mathbb{E}_{\sigma^1_0} \left| \int_0^{\tau_0} \sigma^1(s) ds \right|}{\delta/2} = \frac{2}{\alpha \delta}
	$$
	so choosing $A \ge 6/\epsilon$ leads to the desired bound.
	
	\underline{Bounding (II).} \green{Notice that for $\sigma^1_0 = -1, 0, 1$, under the probability $\mathbb P_{\sigma^1_0}$, the random variables $(D_i)_{i \ge 1}$ are
        i.i.d.~random variables with density~$\frac12 \alpha e^{-\alpha|x|} dx$. In particular $\mathbb E_{\sigma^1_0}[D_i] = 0$ and $\mathbb E_{\sigma^1_0}[D_i^2] = 2/\alpha^2$. Kolmogorov's inequality yields}
	\begin{align*}
	\mathbb{P}_{\sigma^1_0} \left( \max_{1 \le k \le n} \left| \sum_{i = 1}^k D_i \right|  > \delta/2\right) \le \frac{\mathbb{E}_{\sigma^1_0}\left( \sum_{i = 1}^n D_i \right)^2}{\delta^2/4} = \frac{4 n \mathbb{E}_{\sigma^1_0} \left[ D_1^2 \right]}{\delta^2} = 8 \frac{\lceil 2 u \alpha^2 \delta^2\rceil}{\alpha^2 \delta^2}.
	\end{align*}
	
	If $2 u \alpha^2 \delta^2 \ge 1$ one has
        $\lceil 2 u \alpha^2 \delta^2 \rceil \le 4 u \alpha^2 \delta^2
        $ so that
	$$
	\mathbb{P}_{\sigma^1_0} \left( \max_{1 \le k \le n} \left| \sum_{i = 1}^k D_i \right|  > \delta/2\right) \le 32 u
	$$
	and the desired bound holds if $u \le \epsilon / 96$ and
        $A \ge 1/\sqrt{2u}$.
	
	\underline{Bounding (III).} Notice that the random variables
        $(\tau_i - \tau_{i-1})_{i\ge 1}$ are i.i.d. and have mean
        $1/\alpha + 2/\beta$ and variance $1/\alpha^2 +
        4/\beta^2$. Chebyshev's inequality gives us
	\begin{align*}
	\mathbb{P}_{\sigma^1_0} \left( t^* > \tau_n \right) &\le \mathbb{P}_{\sigma^1_0} \left( t^* > \sum_{i = 1}^n (\tau_i - \tau_{i - 1}) \right) = \mathbb{P}_{\sigma^1_0} \left( t^* - n (1/\alpha+ 2/\beta) > \sum_{i = 1}^n (\tau_i - \tau_{i - 1}) - n (1/\alpha+ 2/\beta)\right), \\
	&\le \frac{n \text{Var}\left(\tau_1 - \tau_0\right)}{(t^* - n (1/\alpha + 2/\beta))^2} \le \frac{\lceil 2 u \alpha^2\delta^2\rceil}{\alpha^4u^2\delta^4} \frac{ 4\alpha^2 + \beta^2}{(\alpha + \beta)^2}.
	\end{align*}
	
	If $2 u \alpha^2 \delta^2 \ge 1$ one has
        $\lceil 2 u \alpha^2 \delta^2 \rceil \le 4 u \alpha^2 \delta^2
        $ so that
	$$
	\mathbb{P}_{\sigma^1} \left( t^*(u) > \tau_n \right) \le \left( \sup_{\alpha, \beta > 0}\frac{4\alpha^2 + \beta^2}{(\alpha + \beta)^2} \right) \frac{4}{u \alpha^2 \delta^2} = \frac{16}{u \alpha^2 \delta^2}
	$$
	hence the desired bound for (III) holds if
        $A \ge \max \left( 1/\sqrt{2 u}, \sqrt{\frac{48}{\epsilon u}}
        \right)$.
\end{proof}

\begin{lem} \label{lem_mixing_quadratic_lower_bound_slowman_2017} For
  all $\epsilon \in (0, 1)$ there exist
  $\tilde A (\epsilon), \tilde u (\epsilon) > 0$ such that
	$$
	\alpha \ell \ge \tilde A (\epsilon) \implies t_\mix^\FTD (\epsilon) \ge \tilde u(\epsilon) (1/\alpha + 1/\beta) \alpha^2 \ell^2.
	$$
\end{lem}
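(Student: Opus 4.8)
The plan is to produce a distinguishing event by exploiting that, in the diffusive regime, the position coordinate moves too slowly to explore the torus in a time of order $(1/\alpha+1/\beta)\,\alpha^2\ell^2$, while the invariant measure $\pi$ spreads a definite amount of mass across $[0,\ell]$. Concretely, I would start the process from a jammed boundary state $X_0=(0,\sigma_0)$ with $\sigma_0\in\Sigma_-$ and show that $x(t)$ stays trapped near $0$ up to a time $t^*$ of the desired order, so that the time-$t^*$ law cannot be close to $\pi$ in total variation.

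The first step controls the confinement. Writing $I(t)=\int_0^t(\sigma^2(s)-\sigma^1(s))\,ds$, I would observe that, as long as $x$ has not reached $\ell$, the min--max dynamics of Definition~\ref{def:continuous_process_min_max_construction} started at $x(0)=0$ coincide with the one-sided Skorokhod reflection at $0$, namely $x(t)=I(t)-\inf_{s\le t}I(s)$, whence $0\le x(t)\le 2\sup_{s\le t}\lvert I(s)\rvert$. A short bootstrap (continuity) argument then shows that on the event $\{\sup_{s\le t^*}\lvert I(s)\rvert\le\delta\}$ with $2\delta<\ell$, the far boundary $\ell$ is never reached on $[0,t^*]$, so the identity stays valid throughout and $x(t)\le 2\delta$ for all $t\le t^*$.

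The second step invokes the concentration Lemma~\ref{lem_mixing_lower_bound_concentration}. Applying it with level $\epsilon_0$ and deviation $\delta=\ell/4$, and setting $t^*=u(\epsilon_0)\,(1/\alpha+1/\beta)\,\alpha^2\delta^2$ (using $(1+\alpha/\beta)\alpha=\alpha^2(1/\alpha+1/\beta)$), the hypothesis $\alpha\delta\ge A(\epsilon_0)$ reads $\alpha\ell\ge 4A(\epsilon_0)=:\tilde A(\epsilon)$ and yields $\mathbb P_{X_0}(x(t^*)>\ell/2)\le\mathbb P_{X_0}(\sup_{s\le t^*}\lvert I(s)\rvert>\ell/4)\le\epsilon_0$. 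By the particle-exchange symmetry $(x,(\sigma^1,\sigma^2))\mapsto(\ell-x,(\sigma^2,\sigma^1))$, the measure $\pi$ is symmetric about $\ell/2$ and carries no atom at the interior point $\ell/2$, so $\pi((\ell/2,\ell]\times\Sigma)=1/2$. Taking $A=(\ell/2,\ell]\times\Sigma$ gives $\tv{\mu P_{t^*}-\pi}\ge\pi(A)-\mu P_{t^*}(A)\ge 1/2-\epsilon_0$. Choosing $\epsilon_0=(1/2-\epsilon)/2$ for $\epsilon<1/2$ makes this strictly larger than $\epsilon$, so $t_\mix^\FTD(\epsilon)\ge t^*$, which is of the announced order with $\tilde u(\epsilon)=u(\epsilon_0)/16$.

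To reach the full range $\epsilon\in(0,1)$, I would shrink the trap to $[0,2\delta]$ with $\delta=\eta(\epsilon)\ell$ for a constant $\eta(\epsilon)$ and use $A=(2\delta,\ell]\times\Sigma$; since $\eta$ depends only on $\epsilon$, $t^*$ keeps the order $(1/\alpha+1/\beta)\,\alpha^2\ell^2$. Here one needs $\pi(A)$ close to $1$, i.e.\ that $\pi([0,2\delta]\times\Sigma)$ is small, which is where the explicit invariant measure of Proposition~\ref{prop_invariant_measure_slowman_2017} enters: in the diffusive regime $\alpha\ell\ge\tilde A(\epsilon)$ the boundary Dirac masses become negligible and $\pi$ spreads across $[0,\ell]$. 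The main obstacle I anticipate is precisely this confinement-versus-mass balance: reducing the two-sided jamming dynamics to a single reflection and ruling out the far boundary via the bootstrap, while simultaneously quantifying through the explicit $\pi$ how little mass survives near the boundary in the diffusive limit, so that the distinguishing set has $\pi$-mass exceeding $\epsilon$.
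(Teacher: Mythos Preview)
Your approach works for $\epsilon<1/2$ but takes a genuinely different route from the paper, and the cost of that route shows up precisely in your extension to $\epsilon\ge 1/2$.

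The paper starts from the \emph{center} $X_0=(\ell/2,(1,1))$ rather than a jammed boundary state, which removes the need for any Skorokhod reflection or bootstrap: on the event $\{\sup_{\theta\le t}\lvert I(\theta)\rvert<a\ell/2\}$ with $a\in(0,1)$ neither boundary is reached and one has simply $x(t)=\ell/2+I(t)$, so $\{X(t)\in M^c\}\subset\{\sup_\theta\lvert I(\theta)\rvert\ge a\ell/2\}$ for the centered set $M=((1-a)\ell/2,(1+a)\ell/2)\times\Sigma$. Lemma~\ref{lem_mixing_lower_bound_concentration} then applies with $\delta=a\ell/2$. The required estimate $\pi(M^c)\ge 1-a$ follows cleanly from the explicit density of Proposition~\ref{prop_invariant_measure_slowman_2017}: summed over $\sigma$ the $\sinh$ coefficients cancel by the $\rho_1$-symmetry, leaving $\rho(x)=A+B\cosh(\kappa(x-\ell/2))$ with $B>0$, and the elementary inequality $\sinh(ay)\le a\sinh(y)$ for $a\in(0,1)$ gives $\int_M\rho\le a\int_0^\ell\rho\le a$, uniformly in $\alpha,\beta,\ell$; the Diracs at $0,\ell$ lie in $M^c$ and only help. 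Setting $a=(1-\epsilon)/2$ then covers every $\epsilon\in(0,1)$ in one stroke, with no case distinction.

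Your boundary start buys the neat symmetry fact $\pi((\ell/2,\ell]\times\Sigma)=1/2$, which dispatches $\epsilon<1/2$ without touching the explicit formula at all. But for $\epsilon\ge 1/2$ your distinguishing set $[0,2\delta]\times\Sigma$ contains the Dirac mass at $x=0$, which does not shrink with $\delta$; you must therefore show that this Dirac mass vanishes as $\alpha\ell\to\infty$ \emph{uniformly in} $r=\alpha/\beta$. That is true, but establishing it requires tracking both the unnormalized Dirac weights (of order $r+1/r$) and the normalization constant through the formulas of Proposition~\ref{prop_invariant_measure_slowman_2017}---work the paper's centered starting point avoids entirely.
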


\begin{proof} Set $M = ((1 - a) \ell/2, (1 + a) \ell/2) \times \Sigma$
  where $a \in (0, 1)$ is fixed later. The explicit formula for
  the invariant measure $\pi$ implies $\pi(M^c) \ge 1 - a$. It is
  clear from definition
  \ref{def:continuous_process_min_max_construction} that
	$$
	\mathbb{P}_{(\ell/2, 1, 1)} \left( X(t) \in M^c \right) \le
        \mathbb{P}_{(1, 1)} \left( \sup_{0 \le \theta \le t} \left|
            \int_0^\theta (\sigma^2(s) - \sigma^1(s)) ds \right| \ge
          \frac{a \ell}{2}\right).
	$$
	
	By lemma \ref{lem_mixing_lower_bound_concentration}, with
        $t^* = u\left(\frac{1 - \epsilon}{4}\right) (1 +
        \frac{\alpha}{\beta}) \alpha \delta^2$,
	$$
	\alpha \delta \ge A \left( \frac{1 - \epsilon}{4}\right) \implies \mathbb{P}_{(1, 1)} \left( \sup_{0 \le t \le t^*} \left| \int_0^t (\sigma^2(s) - \sigma^1(s)) ds \right| \ge \delta \right) \le \frac{1 - \epsilon}{4}.
	$$
	
	Taking $\delta = a \ell / 2$ one gets with
        $t^* = \frac{a^2}{4} u \left( \frac{1 - \epsilon}{4}\right)
        (1+ \frac{\alpha}{\beta}) \alpha \ell^2$,
	\begin{align*}
	\alpha \ell \ge \frac{2}{a} A \left(\frac{1 - \epsilon}{4}\right) \implies \mathbb{P}_{(1, 1)} \left( \sup_{0 \le t \le t^*} \left| \int_0^t (\sigma^2(s) - \sigma^1(s)) ds \right| \ge \frac{a \ell}{2} \right) \le \frac{1 - \epsilon}{4}.
	\end{align*}
	
	Set $a = \frac{1 - \epsilon}{2}$,
        $\tilde A(\epsilon) = \frac{2}{a} A \left(\frac{1 -
            \epsilon}{4}\right)$,
        $\tilde u (\epsilon) = \frac{a^2}{4} u \left( \frac{1 -
            \epsilon}{4} \right)$ and
        $t^* = \tilde u (\epsilon) (1 + \frac{\alpha}{\beta}) \alpha
        \ell^2$. If $\alpha \ell \ge \tilde A (\epsilon)$ and
        $t^* \ge t_\mix^\IT(\epsilon)$ then
	\begin{align*}
	1 - \frac{1 - \epsilon}{2} - \frac{1 - \epsilon}{4}& \le 1 - a - \mathbb{P}_{(1, 1)} \left( \sup_{0 \le \theta \le t^*} \left| I(\theta) \right| \ge \frac{a \ell}{2}\right)\\
	& \le \pi (M^c) - \mathbb{P}_{(\ell/2, 1, 1)} (X(t^*) \in M^c) \le \sup_{\mu} \tv{\mu P_{t^*} - \pi} \le \epsilon
	\end{align*}
	which is a contradiction because $\epsilon \in (0, 1)$. Hence
        $\alpha \ell \ge \tilde A (\epsilon)$ implies
        $t_\mix^\IT(\epsilon) \ge \tilde u (\epsilon) (1 +
        \frac{\alpha}{\beta}) \alpha \ell^2$.
\end{proof}

Assertion (ii) of theorem \ref{thm:mixing_time} now follows from lemma
\ref{lem_mixing_constant_lower_bound_slowman_2017} and lemma
\ref{lem_mixing_quadratic_lower_bound_slowman_2017}.

\subsection{Upper bound for the instantaneous tumble process}

We show the upper bound of theorem \ref{thm:mixing_time} by coupling
techniques, first for assertion (i) in this section and for assertion
(ii) in the next. We start by defining the continuous analog of the
discrete coupling of definition \ref{def:discrete_discrete_coupling}.

\begin{definition}[Continuous-continuous
  coupling] \label{def:continuous_continuous_coupling} Let
  $(x_0, (\sigma^1_0, \sigma^2_0)), (\tilde x_0,(\tilde \sigma^1_0,
  \tilde \sigma^2_0)) \in [0, \ell] \times \Sigma$ be given. Let
  $s^1, s^2, \tilde s^1, \tilde s^2$ be four independent Markov jump
  processes with the transition rates of figure
  \ref{fig:single_speed_transition_rates_slowman_2016} (resp. figure
  \ref{fig:single_speed_transition_rates_slowman_2017}) and respective
  initial states
  $\sigma^1_0, \sigma^2_0, \tilde \sigma^1_0, \tilde \sigma^2_0$. Set
  $\tau_i = \inf \{t \ge 0: s_i(t) = \tilde s_i(t)\}$ and define
\begin{align*}
  \sigma^i(t) = s^i(t) \text{ for } t \ge 0, \quad \quad \tilde \sigma^i(t) = \tilde s^i(t) \text{ for } t \le \tau_i \text{ and } \tilde \sigma^i(t) = s^i(t) \text{ for } t\ge \tau_i.
\end{align*}

Finally set $X = \mathcal X(\sigma^1, \sigma^2, x_0, \ell)$ and
$\tilde X = \mathcal X (\tilde \sigma^1, \tilde \sigma^2, \tilde x_0,
\ell)$.
\end{definition}
The goal for the construction of this coupling is that
\begin{align*}
\sigma^i(t_0) = \tilde \sigma^i(t_0) &\implies \sigma^i(t) = \tilde \sigma^i(t) \text{ for all } t \ge t_0, \\
X(t_0) = \tilde X(t_0) &\implies X(t) = \tilde X(t) \text{ for all } t \ge t_0.
\end{align*}

Similarly to the coupling of the discrete process, the challenge lies
in the determination of the time between the velocity coupling
$\tau_\sigma = \max_i \tau_i$ and the one of the positions. The key
ingredient here is that, once the velocities are identical, the
interdistance $|x(t)-\tilde x(t)|$ is nonincreasing and the order
between the positions is preserved, as, 
\begin{align*}
x(\tau_\sigma) \le \tilde x(\tau_\sigma) \implies x(t) \le \tilde x(t) \text{ for all } t \ge \tau_\sigma, \\
x(\tau_\sigma) \ge \tilde x(\tau_\sigma) \implies x(t) \ge \tilde x(t) \text{ for all } t \ge \tau_\sigma.
\end{align*}
Furthermore, the position coupling happens with $\sigma^1\sigma^2=-1$
either in $0$ with $\sigma^1=1$ or $l$ with
$\sigma^1=-1$.
Therefore, we deduce,
$$
\inf \{ t \ge 0 : X(t) = \tilde X(t) \}= \inf\left\{ t \ge
  \tau_\sigma : x(t) \vee \tilde x(t) = 0 \right\} \wedge \inf \left\{t \ge \tau_\sigma: x(t) \wedge \tilde x(t) = l \right\} =: \tau_\text{\normalfont
  coupling}.
$$
Thus upper bounding the mixing times reduces to upper bounding a hitting time.
Section~\ref{secdavis} and proposition \ref{prop_min_max_form} allows us to
explicitly compute hitting times by solving systems of differential equations
using the theory laid out in section 3 of \cite{davis93}.

\begin{lem} \label{lem_maximum_hitting_time_of_0}
Let $\tau = \inf \left\{ t \ge 0 : X_D(t) = (0, *^0_{(1, -1)} ) \right\}$. There exists $C > 0$ such that
\begin{align*}
\sup_{X_0 \in E_D} E_{X_0} \left[\tau\right] \le C \frac{1}{\omega} \left(1 + \omega^2 \ell^2\right).
\end{align*}
\end{lem}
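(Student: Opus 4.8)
The plan is to compute the expected hitting time directly through the system of ODEs provided by the generator of Proposition~\ref{prop:generator_continuous_process}. Write $h(X_0) = \mathbb E_{X_0}[\tau]$. Since the target $(0,*^0_{(1,-1)})$ carries positive invariant mass, the process is positive recurrent, so $\tau < \infty$ almost surely and $h$ is a bounded solution of $\mathcal L h = -1$ on $E_D \setminus \{(0, *^0_{(1,-1)})\}$ with $h(0, *^0_{(1,-1)}) = 0$. Once a bounded nonnegative solution is exhibited, the process $h(X_{t\wedge\tau}) + (t\wedge\tau)$ is a martingale and optional stopping identifies the solution with $\mathbb E_{\cdot}[\tau]$, so it suffices to solve and estimate.

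First I would reduce the system. For the CITP the velocity difference $\sigma^2 - \sigma^1$ takes values in $\{-2, 0, 2\}$, the two states of difference $0$, namely $(1,1)$ and $(-1,-1)$, playing symmetric roles. Writing $h_-, h_+, h_0$ for the values of $h$ on the bulk states with velocity difference $-2$ (i.e.~$(1,-1)$), $+2$ (i.e.~$(-1,1)$) and $0$ respectively, Proposition~\ref{prop:generator_continuous_process} yields on $(0,\ell)$ the algebraic relation $h_- + h_+ - 2h_0 = -1/\omega$ together with $-2h_-' + 2\omega(h_0 - h_-) = -1$ and $2h_+' + 2\omega(h_0 - h_+) = -1$. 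Eliminating $h_0$, the difference $D = h_+ - h_-$ satisfies $D' = -2$ and the sum $S = h_+ + h_-$ satisfies $S' = \omega D$, so $h_-, h_+$ are explicit degree-two polynomials in $x$ depending on two integration constants.

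These constants are fixed by the domain conditions of Proposition~\ref{prop:generator_continuous_process}. Continuity at the target gives $h_-(0) = 0$. At $x = \ell$ the delicate part is the bookkeeping of the jammed states: the reentrant limit $\lim_{x \to \ell} h_+(x)$ must match the value at $(\ell, *^\ell_{(-1,1)})$, and writing the purely-jump generator equations at $(\ell, *^\ell_{(-1,1)})$ and at the difference-$0$ jammed states $(\ell, *^\ell_{(1,1)}), (\ell, *^\ell_{(-1,-1)})$ — which do not feed back into the bulk ODEs but must be consistent with them — yields after elimination the single relation $h_+(\ell) - h_-(\ell) = 2/\omega$. This determines the constants explicitly; one checks $h_\pm, h_0$ are nonnegative and increasing in $x$, so the supremum over bulk states is attained near $x = \ell$ and has the form $a\ell + b/\omega + c\,\omega\ell^2$ with absolute constants $a,b,c$. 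Since $\ell \le \tfrac12(1/\omega + \omega\ell^2)$ by AM--GM, the linear-in-$\ell$ contribution is absorbed and $\sup_{X_0 \in E_D} h(X_0) \le C\,\tfrac1\omega(1 + \omega^2\ell^2)$; the remaining jammed states at $x = 0$ are bounded in the same way.

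I expect the main obstacle to be precisely this boundary analysis: confirming that the difference-$0$ jammed states decouple from the bulk ODE yet stay consistent with it, so that the two relations $h_-(0)=0$ and $h_+(\ell)-h_-(\ell) = 2/\omega$ are the correct ones, and verifying that the formal polynomial is genuinely $\mathbb E_{\cdot}[\tau]$ rather than merely a solution of the equation. For the upper bound alone one could instead bypass the exact matching by exhibiting a supersolution $g \ge 0$ with $g(0,*^0_{(1,-1)}) = 0$ and $\mathcal L g \le -1$ elsewhere of the same quadratic shape, which gives $\mathbb E_{X_0}[\tau] \le g(X_0)$ directly.
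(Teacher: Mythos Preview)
Your approach is correct and is essentially the same as the paper's: set up and solve the Dynkin system $\mathcal L h = -1$ with the boundary conditions $h_-(0)=0$ and $h_+(\ell)-h_-(\ell)=2/\omega$, then bound the resulting quadratic in $x$. Your reduction to $D=h_+-h_-$ and $S=h_++h_-$ and your explicit derivation of the $2/\omega$ boundary relation from the three jammed-state equations at $x=\ell$ make transparent what the paper simply states; the paper then writes out the four explicit polynomials and maximizes over $x$, which is equivalent to your AM--GM absorption of the $\ell$ term.
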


\begin{proof} \textcolor{myGreen}{By the characterization of the generator provided in lemma~\ref{prop:generator_continuous_process} (see Section 3 of~\cite{davis93})}, the mean hitting time
  $f(x, \sigma) = E_{(x, \sigma)} \left[\tau\right]$ satisfies the
  differential-algebraic system of equations
$$
\left.
\begin{array}{r}
-2\omega f(x, 1, 1) + \omega f(x, 1, -1) + \omega f(x, -1, 1) + 1 = 0\\
-2 \partial_x f(x, 1, -1) - 2\omega f(x, 1, -1) + \omega f(x, 1, 1) + \omega f(x, -1, -1) + 1 = 0\\
2 \partial_x f(x, -1, 1) -2\omega f(x, -1, 1) + \omega f(x, 1, 1) + \omega f(x, -1, -1) + 1 = 0\\
-2\omega f(x, -1, -1) + \omega f(x, 1, -1) + \omega f(x, -1, 1) + 1 = 0
\end{array}
\right\}
\text{ for all } x \in (0, \ell)
$$
with boundary conditions $f(0, 1, -1) = 0$ and
$f(\ell, -1, 1) = f(\ell, 1, -1) + 2/\omega$ yielding
\begin{align*}
\mathbb{E}_{(x, (1, 1))}[\tau] = \frac{2   { (\ell + x )} \omega + { (2   \ell x - x^{2} )} \omega^{2} + 3   }{2 \omega},&\qquad\mathbb{E}_{(x, (1, -1))}[\tau] = \frac{4   x  + { (2   \ell x - x^{2} )} \omega}{2}, \\
\mathbb{E}_{(x, (-1, 1))}[\tau] = \frac{4   \ell \omega + { (2   \ell x - x^{2} )} \omega^{2} + 4}{2 \omega},\qquad& \qquad\mathbb{E}_{(x, (-1, -1))}[\tau] = \frac{2   { (\ell + x )} \omega + { (2   \ell x - x^{2} )} \omega^{2} + 3}{2 \omega}.
\end{align*}

The result follows by optimizing over $x$.
\end{proof}

An upper bound for the mixing time of the continuous instantaneous tumble
process can now be given.

\begin{proof}[Upper bound of theorem~\ref{thm:mixing_time} (i)]
	
  Let
  $(x_0, (\sigma^1_0, \sigma^2_0)), (\tilde x_0, (\tilde \sigma^1_0,
  \tilde \sigma^2_0)) \in [0, \ell] \times \Sigma$ be arbitrary but
  fixed. Let $X, \tilde X$ and $\tau_1, \tau_2$ be as in definition~\ref{def:continuous_continuous_coupling}.

Set $\tau_0 = \inf \{ t \ge \tau_\sigma : x(t) = 0 \}$ and $\tilde \tau_0 = \inf \{ t \ge \tau_\sigma: \tilde x(t) = 0 \}$. Using lemma \ref{lem_maximum_hitting_time_of_0} one gets
\begin{align*}
\mathbb E_{X_0, \tilde X_0} \left[ \tau_\text{coupling} \right] &\le \mathbb E_{X_0, \tilde X_0} \left[ \max\left(\tau_0, \tilde \tau_0 \right) - \max\left(\tau_1, \tau_2 \right) \right] + \mathbb E_{X_0, \tilde X_0} \left[ \max\left(\tau_1, \tau_2 \right) \right] \\
&\le \mathbb E_{X_0, \tilde X_0} \left[\tau_0 - \max\left(\tau_1, \tau_2 \right) \right] + \mathbb E_{X_0, \tilde X_0} \left[\tilde \tau_0 - \max\left(\tau_1, \tau_2 \right) \right] + \mathbb E_{X_0, \tilde X_0} \left[\tau_1\right] +  \mathbb E_{X_0, \tilde X_0} \left[\tau_2\right]\\
&\le C \frac{1}{\omega} \left(1 + \omega^2\ell^2\right) + C \frac{1}{\omega} \left(1 + \omega^2\ell^2\right) + \frac{1}{2 \omega} + \frac{1}{2 \omega} \\
&= (2C + 1) \frac{1}{\omega} \left(1 + \omega^2 \ell^2 \right).
\end{align*}
Let $\epsilon>0$ be arbitrary but fixed and set
$t^* = (2C + 1) (1 + \omega^2 \ell^2) / (\epsilon \omega)$. One has
\begin{align*}
\sup_\mu \tv{\mu P_{t^*} - \pi } 
 \le \sup_{X_0, \tilde X_0} \mathbb P_{X_0, \tilde X_0} \left(\tau_\text{coupling} > t^* \right) \le \sup_{X_0, \tilde X_0} \frac{\mathbb E_{X_0, \tilde X_0} \left[\tau_\text{coupling}\right]}{t^*} \le \epsilon.
\end{align*}
\end{proof}

\subsection{Upper bound for the finite tumble process}

We now derive an upper bound for the mixing time of the continuous
finite tumble process using the same coupling. We start by controlling
$\tau_\sigma = \max_i \tau_i$.

\begin{lem} \label{lem_mean_speed_coupling_time_slowman_2017} Let
  $\sigma = (\sigma^1, \sigma^2), \tilde \sigma = (\tilde \sigma^1,
  \tilde \sigma^2)$ and $\tau_1, \tau_2$ be as in definition
  \ref{def:continuous_continuous_coupling}. One has
  $$ \sup_{\sigma_0, \tilde \sigma_0 \in \Sigma} \mathbb{E}_{\sigma_0,
    \tilde \sigma_0} \left[ \max \left(\tau_1, \tau_2 \right) \right]
  \leq \frac{3}{\alpha}.
$$
\end{lem}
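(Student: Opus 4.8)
The plan is to reduce the statement to a one-dimensional meeting-time estimate. Since $\tau_1$ and $\tau_2$ are the meeting times of the independent pairs $(s^1,\tilde s^1)$ and $(s^2,\tilde s^2)$, each depending only on its own coordinate, I would use the crude but sufficient bound $\max(\tau_1,\tau_2)\le \tau_1+\tau_2$. Granting a uniform one-coordinate estimate $\sup \mathbb{E}[\tau_i]\le 3/(2\alpha)$, this immediately gives $\mathbb{E}_{\sigma_0,\tilde\sigma_0}[\max(\tau_1,\tau_2)]\le \mathbb{E}[\tau_1]+\mathbb{E}[\tau_2]\le 3/(2\alpha)+3/(2\alpha)=3/\alpha$, and taking the supremum over initial states preserves the inequality because each summand depends on a single coordinate. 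So the entire content lies in the one-coordinate estimate.

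To obtain it, I would observe that $\tau_i$ is the hitting time of the diagonal $\{(v,v):v\in\{-1,0,1\}\}$ by the Markov jump process $(s^i(t),\tilde s^i(t))$ on $\{-1,0,1\}^2$ with the rates of figure~\ref{fig:single_speed_transition_rates_slowman_2017}, so that $\tau_i=0$ on the diagonal and only off-diagonal starts matter. Exploiting the sign-flip symmetry $v\mapsto -v$ and the exchange symmetry of the two copies, I would collapse the off-diagonal states into two classes: an \emph{adjacent} class (one coordinate at $0$, the other at $\pm1$), with mean hitting time $a$, and an \emph{opposite} class (coordinates at $+1$ and $-1$), with mean hitting time $b$. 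Using the generator characterization of mean hitting times (as in the proof of Lemma~\ref{lem_maximum_hitting_time_of_0}), the first-step equations read $b=\tfrac{1}{2\alpha}+a$, since from the opposite class the only moves are the two $\pm1\to0$ transitions at rate $\alpha$, both landing in the adjacent class, and $a=\tfrac{1}{\alpha+\beta}+\tfrac{\beta/2}{\alpha+\beta}\,b$, since from $(0,+1)$ the $+1\to0$ move (rate $\alpha$) and the $0\to+1$ move (rate $\beta/2$) both couple, while the $0\to-1$ move (rate $\beta/2$) returns to the opposite class.

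Solving this $2\times2$ system is routine and yields $a=\tfrac{4\alpha+\beta}{2\alpha(2\alpha+\beta)}$ and $b=\tfrac{3\alpha+\beta}{\alpha(2\alpha+\beta)}$, so the worst initial condition is the opposite class with value $b$. The final and only mildly delicate point is checking that this worst case is $\beta$-uniformly controlled: the inequality $b\le 3/(2\alpha)$ is equivalent to $2(3\alpha+\beta)\le 3(2\alpha+\beta)$, i.e.\ $0\le\beta$, which always holds, with saturation only in the limit $\beta\to0$. This gives $\sup\mathbb{E}[\tau_i]=b\le 3/(2\alpha)$ and closes the argument. The main conceptual step is thus the symmetry reduction and the realization that the per-coordinate meeting time stays bounded by $3/(2\alpha)$ for all $\beta$; the rest is elementary first-step analysis and the $\max\le\text{sum}$ bound.
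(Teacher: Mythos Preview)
Your proof is correct and follows essentially the same approach as the paper: both use $\max(\tau_1,\tau_2)\le\tau_1+\tau_2$, reduce to a one-coordinate meeting time, and solve the first-step equations to obtain the adjacent and opposite values (your $a,b$ coincide with the paper's $\frac{1}{\alpha}\frac{4r+1}{4r+2}$ and $\frac{1}{\alpha}\frac{3r+1}{2r+1}$ with $r=\alpha/\beta$), then observe that the supremum over $\beta$ of the worst case $b$ is $3/(2\alpha)$. The only cosmetic difference is that you make the symmetry reduction to two equivalence classes explicit, whereas the paper lists all off-diagonal values and groups them after the fact.
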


\begin{proof}
  One has
  $\sup_{\sigma_0, \tilde \sigma_0} \, \mathbb{E}_{\sigma_0, \tilde
    \sigma_0} \left[ \max \left(\tau_1, \tau_2 \right) \right] \le
  \sup_{\sigma_0, \tilde \sigma_0} \, \mathbb E[\tau_1 + \tau_2] \le 2
  \sup_{\sigma^1_0, \tilde \sigma^1_0} \, \mathbb E_{\sigma^1_0,
    \tilde \sigma^1_0} [\tau_1]$. Hence, it is enough to show
  $\sup_{\sigma^1_0, \tilde \sigma^1_0} \mathbb E_{(\sigma^1_0, \tilde
    \sigma^1_0)}[\tau_1] \le \frac{3}{2\alpha}$.

  Set
  $k_{\sigma^1_0, \tilde \sigma^1_0} = \mathbb E_{(\sigma^1_0, \tilde
    \sigma^1_0)} \left[ \tau_1 \right]$. By first step analysis,
\begin{align*}
\sum_{(s^1, \tilde s^1)} q_{(\sigma^1_0, \tilde \sigma^1_0), (s^1, \tilde s^1)} k_{(s^1, \tilde s^1)} + 1 = 0 &\text{ if } \sigma^1_0 \neq \tilde \sigma^1_0, \\
k_{\sigma^1_0, \tilde \sigma^1_0} = 0 &\text{ if } \sigma^1_0 = \tilde \sigma^1_0,
\end{align*}
where $q$ is the discrete generator of the couple
$(\sigma^1, \tilde \sigma^1)$. Solving this system yields
\begin{align*}
\mathbb{E}_{(-1, -1)} \left[ \tau_1 \right] = \mathbb{E}_{(0, 0)} \left[ \tau_1 \right] = \mathbb{E}_{(1, 1)} \left[ \tau_1 \right] &=  0, \\
\mathbb{E}_{(0, -1)} \left[ \tau_1 \right] = \mathbb{E}_{(0, 1)} \left[ \tau_1 \right] = \mathbb{E}_{(-1, 0)} \left[ \tau_1 \right] = \mathbb{E}_{(1, 0)} \left[ \tau_1 \right] &= \frac{1}{\alpha} \frac{4 \, r + 1}{4 \, r + 2}, \\
\mathbb{E}_{(-1, 1)} \left[ \tau_1 \right] = \mathbb{E}_{(1, -1)} \left[ \tau_1 \right] &= \frac{1}{\alpha} \frac{3 \, r + 1}{2 \, r + 1},
\end{align*}
so that the result follows from optimizing over $r > 0$.
\end{proof}

We now evaluate the elapsed time between the couplings of velocity and
position. Adopting a similar strategy to that used in the discrete
case, we focus on variations of the integral process
$I(t)=\int_0^t (\sigma^2(s) - \sigma^1(s)) ds$. As a first step, we establish an
improved continuous form of the deterministic lemma
\ref{lem:deterministic_discrete_coupling_lemma} that links the
coupling time to the variations of $I(t)$.

\begin{lem} \label{lem_crucial_deterministic_lemma}
	
  Let $x(t)$ and $\tilde x (t)$ be as in definition
  \ref{def:continuous_continuous_coupling} and assume
  $(\sigma^1_0, \sigma^2_0) = (\tilde \sigma^1_0, \tilde \sigma^2_0)$.
	
  Then for all $T > 0$ and all fixed realization of
  $(\sigma^1, \sigma^2)$ one has
	$$
	\sup_{t_1 \le t_2 \le T} \left| \int_{t_1}^{t_2} (\sigma^2(s) - \sigma^1(s)) ds \right| \ge \ell \implies \inf \{t \ge 0 : x(t) = \tilde x(t) \} \le T.
	$$
\end{lem}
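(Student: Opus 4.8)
The plan is to mirror the discrete deterministic lemma~\ref{lem:deterministic_discrete_coupling_lemma}, replacing the jump random walk $S^L$ by the integral $I$ and the integer clamping by the continuous reflection built into definition~\ref{def:continuous_process_min_max_construction}. First I would record that, because $(\sigma^1_0, \sigma^2_0) = (\tilde\sigma^1_0, \tilde\sigma^2_0)$, the velocity coupling times $\tau_i$ vanish, so $\sigma^i \equiv \tilde\sigma^i$ and both $x$ and $\tilde x$ are driven by the very same relative velocity $v(t) = \sigma^2(t) - \sigma^1(t)$, a piecewise constant function, through the same $\max[0,\min[\ell,\cdot]]$ recursion. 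I would then assume without loss of generality (the statement being symmetric in $x \leftrightarrow \tilde x$) that $x(0) \le \tilde x(0)$ and invoke the order preservation recalled just before the lemma to get $x(t) \le \tilde x(t)$ for all $t$; if one prefers not to take this as given, it follows from a one-line monotonicity argument since $a \mapsto \max[0,\min[\ell, a + c]]$ is nondecreasing.

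The argument then proceeds by contradiction: suppose $x(t) \ne \tilde x(t)$ for all $t \le T$, so that in fact $x(t) < \tilde x(t)$ on $[0, T]$, whence $x(t) < \ell$ and $\tilde x(t) > 0$ throughout $[0, T]$. The core step, which plays the role of the induction $y^L(t) \ge y^L(0) + S^L(t)$ in lemma~\ref{lem:deterministic_discrete_coupling_lemma}, is a pair of pathwise increment inequalities. As long as $x$ stays strictly below $\ell$ the upper clamp is inactive, so on each velocity-constant interval $x(t) \ge x(T_k) + v_k(t - T_k)$, and chaining this over the velocity jump times $t_1 = s_0 < \cdots < s_n = t_2$ yields $x(t_2) \ge x(t_1) + \int_{t_1}^{t_2} v$; symmetrically, as long as $\tilde x$ stays strictly above $0$ the lower clamp is inactive and one obtains $\tilde x(t_2) \le \tilde x(t_1) + \int_{t_1}^{t_2} v$.

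To conclude I would unwind the oscillation hypothesis. Setting $J(t) = \int_0^t v$, the quantity $\sup_{t_1 \le t_2 \le T} \left| \int_{t_1}^{t_2} v \right|$ equals the range $\max_{[0,T]} J - \min_{[0,T]} J$, so the hypothesis produces $t_1 \le t_2 \le T$ with either $\int_{t_1}^{t_2} v \ge \ell$ or $\int_{t_1}^{t_2} v \le -\ell$. In the first case the increment bound for $x$ forces $x(t_2) \ge x(t_1) + \ell \ge \ell$, contradicting $x(t_2) < \ell$; in the second, the bound for $\tilde x$ forces $\tilde x(t_2) \le \tilde x(t_1) - \ell \le 0$, contradicting $\tilde x(t_2) > 0$. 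Either way the contradiction establishes that the processes meet by time $T$.

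I expect the only delicate point to be the bookkeeping in the increment inequalities: one must check carefully that $x(t) < \ell$ genuinely deactivates the $\min[\ell, \cdot]$ clamp, and dually that $\tilde x(t) > 0$ deactivates the $\max[0, \cdot]$ clamp, at every point of the interval including the jump times, so that the one-sided estimate propagates cleanly through the recursion rather than only holding interval by interval. This is routine but is where the min-max structure of definition~\ref{def:continuous_process_min_max_construction} must be used with some care.
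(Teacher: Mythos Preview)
Your argument is correct and follows essentially the same route as the paper's proof: assume an ordering $x(0)\le\tilde x(0)$, suppose by contradiction that $x<\tilde x$ on $[0,T]$, and derive the one-sided increment bound $x(t_2)\ge x(t_1)+\int_{t_1}^{t_2}v$ by propagating through the velocity-constant intervals, which forces $x(t_2)\ge\ell$. The only cosmetic difference is that the paper folds the two sign cases into a single ``without loss of generality'' (using in addition the $x\mapsto\ell-x$, $v\mapsto-v$ symmetry), whereas you keep the $\int v\ge\ell$ and $\int v\le-\ell$ cases explicit and use the dual bound on $\tilde x$ for the second; this is arguably cleaner and your flagged bookkeeping point about the clamps is exactly the recursion the paper alludes to.
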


\begin{proof}
  Assume  without loss of generality that there exists $t_1 \le t_2 \leq T$ such that $\int_{t_1}^{t_2} (\sigma^2(s) - \sigma^1(s)) ds \ge \ell$ and that $x(0) \leq \tilde x(0)$, hence $x(t) \leq \tilde x(t)$ for all $t \geq 0$. 
  
  Assume by contradiction that $ x(t) < \tilde x(t) \leq \ell$ for all
  $t \le T$. A recursion shows that this leads to
  $x(t) \geq x(t_1) + \int_{t_1}^{t} (\sigma^2(s) - \sigma^1(s)) ds$
  for all $t \in [t_1, T]$. Thus
  $x(t_2) \geq x(t_1) + \int_{t_1}^{t_2} (\sigma^2(s) - \sigma^1(s))
  ds \geq \ell$ which is impossible.
\end{proof}

The previous deterministic result is completed by showing that
variation of order $\ell$ are reached after a time of order
$(1/\alpha + 1/\beta)(1 + \alpha^2 \ell^2)$.  Rather than considering
$\int_{t_1}^{t_2} (\sigma^2(s) - \sigma^1(s)) , ds$, let us instead
examine $\int_{\tau_0}^{\tau_n} (\sigma^2(s) - \sigma^1(s)) , ds$,
where the $\tau_i$ represent return times of $(\sigma^1, \sigma^2)$ to
a set that will be defined later. This allows us to express
$$ \int_{\tau_0}^{\tau_n} (\sigma^2(s) - \sigma^1(s)) ds = \sum_{i =
  1}^n \underbrace{\int_ {\tau_{i-1}}^{\tau_i} (\sigma^2(s) - \sigma^1(s)) ds}_{=: \D_i}$$ enabling us to leverage the fact that the $\D_i$
are i.i.d., making them more amenable to computation.

\begin{lem} \label{lem_variation_lower_bound} If $\sigma^1$ and
  $\sigma^2$ are independent Markov jump processes with the transition
  rates \ref{fig:single_speed_transition_rates_slowman_2017} then
\begin{align*}
\inf_{\sigma} \mathbb{P}_{\sigma} \left( \sup_{t_1 \le t_2 \le t^*} \left|\int_{t_1}^{t_2} (\sigma^2(s) - \sigma^1(s)) ds \right| \ge \ell \right) \ge \frac{1}{144} 
\end{align*}
where $t^* = 288  \left( \frac{1}{\alpha} + \frac{1}{\beta} \right) \left( 1 + \alpha^2 \ell^2 \right) $.
\end{lem}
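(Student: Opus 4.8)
The plan is to reduce the statement to an anti-concentration estimate for a random walk with i.i.d.\ increments, in the same spirit as the Paley--Zygmund argument of Lemma~\ref{lem:variation_of_discrete_unbounded_process}. First I would fix a reference state $s_0\in\Sigma$ for the pair $(\sigma^1,\sigma^2)$ with one particle running and the other tumbling, say $s_0=(1,0)$, and let $0\le\tau_0<\tau_1<\cdots$ be its first hitting time and successive return times. Since $(\sigma^1,\sigma^2)$ is an irreducible finite Markov jump process and $s_0$ is a \emph{single} state, the cycles are genuine regeneration cycles, so the increments $\D_i=\int_{\tau_{i-1}}^{\tau_i}(\sigma^2(s)-\sigma^1(s))\,ds$ are i.i.d.\ for $i\ge 1$. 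On the event $\{\tau_n\le t^*\}$, taking $t_1=\tau_0$ and $t_2=\tau_n$ gives
\[
\sup_{t_1\le t_2\le t^*}\left|\int_{t_1}^{t_2}(\sigma^2(s)-\sigma^1(s))\,ds\right|\ge\left|\sum_{i=1}^n\D_i\right|,
\]
so that $\mathbb P_\sigma(\sup\ge\ell)\ge\mathbb P_\sigma(|\sum_{i=1}^n\D_i|\ge\ell)-\mathbb P_\sigma(\tau_n>t^*)$. It then suffices to make the first term a universal constant and the second small, uniformly in the initial state $\sigma$ and in $\alpha,\beta$.

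Second, I would compute the relevant cycle moments by first-step analysis on the generator of $(\sigma^1,\sigma^2)$, exactly as in Lemma~\ref{lem_maximum_hitting_time_of_0}. The mean cycle length is $m=\mathbb E[\tau_1-\tau_0]=1/(q_{s_0}\pi_{s_0})$, which for $s_0=(1,0)$ equals $2(1/\alpha+1/\beta)$; this choice of $s_0$ (one running, one tumbling particle) is what makes $m$ of the right order $1/\alpha+1/\beta$ uniformly in the ratio $\alpha/\beta$, unlike the states $(0,0)$ or $(\pm1,\pm1)$ whose return times degenerate in one regime or the other. Because $\mathbb E_\pi[\sigma^2-\sigma^1]=0$, the renewal--reward identity yields $\mathbb E[\D_i]=m\,\mathbb E_\pi[\sigma^2-\sigma^1]=0$, and the second moment scales as $v:=\mathbb E[\D_1^2]\asymp 1/\alpha^2$ (consistently with the diffusion constant $v/m\asymp\beta/(\alpha(\alpha+\beta))$ of $\int(\sigma^2-\sigma^1)$). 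Choosing $n$ of order $1+\alpha^2\ell^2$ then guarantees $nv\gtrsim\ell^2$, and Paley--Zygmund applied to $(\sum_{i=1}^n\D_i)^2$, using $\mathbb E[(\sum\D_i)^2]=nv$ and $\mathbb E[(\sum\D_i)^4]=3n(n-1)v^2+n\,\mathbb E[\D_1^4]$, gives $\mathbb P_\sigma(|\sum_{i=1}^n\D_i|\ge\ell)\ge c$ for a universal $c$ as soon as $n\gtrsim\mathbb E[\D_1^4]/v^2$.

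Finally, for the time budget I would bound $\tau_n=\tau_0+\sum_{i=1}^n(\tau_i-\tau_{i-1})$: the initial hitting time satisfies $\sup_\sigma\mathbb E_\sigma[\tau_0]\asymp 1/\alpha+1/\beta$ by finiteness of the chain, while $\sum_{i=1}^n(\tau_i-\tau_{i-1})$ has mean $nm$ and variance $n\,\mathrm{Var}(\tau_1-\tau_0)$, so Chebyshev's inequality gives $\mathbb P_\sigma(\tau_n>t^*)\le\frac{1}{144}$ once $t^*=288\,(1/\alpha+1/\beta)(1+\alpha^2\ell^2)\ge 2\,\mathbb E[\tau_n]$, the constant $288$ absorbing $\mathbb E[\tau_0]$, the variance ratio and the factor relating $n$ to $1+\alpha^2\ell^2$. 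Combining the two bounds and taking the infimum over $\sigma$ then produces the stated $1/144$. The main obstacle is the regime-uniform control needed to run Paley--Zygmund with a universal constant: I must bound the normalised fourth moment $\mathbb E[\D_1^4]/v^2$ uniformly in $\alpha,\beta$, equivalently keep the anti-concentration constant bounded away from $0$ across the persistent regime $\alpha\ell\ll1$ (where a single cycle must already reach $\ell$) and the diffusive regime $\alpha\ell\gg1$ (where many cycles accumulate). This forces an explicit, if tedious, evaluation of $v$, $\mathbb E[\D_1^4]$ and $\mathrm{Var}(\tau_1-\tau_0)$ through the linear systems attached to the generator, together with the verification that their scalings in $\alpha,\beta$ combine into the announced form of $t^*$.
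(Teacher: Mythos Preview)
Your skeleton is correct and matches the paper's: regeneration, i.i.d.\ increments $\D_i$, Paley--Zygmund for anti-concentration of $\sum_{i=1}^n\D_i$, and a tail bound for $\tau_n$ to fit the excursion inside $[0,t^*]$. The paper differs from you in two substantive choices, both of which make the explicit computations lighter.

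First, the regeneration set. You regenerate at the single state $(1,0)$, which makes the i.i.d.\ property automatic but forces you to compute the moments of $\D_1$ and of $\tau_1-\tau_0$ by solving full first-step systems on nine states. The paper instead regenerates at the \emph{diagonal} $\Delta=\{\sigma^1=\sigma^2\}$, defining $\tau_n$ as successive return times to $\Delta$. This buys two things: (a) the integrand $\sigma^2-\sigma^1$ vanishes on $\Delta$, so only the off-diagonal excursion contributes to $\D_i$, and (b) by symmetry, the entry law into $\Delta$ and the MGF $\phi(\lambda)=\mathbb E_\sigma[e^{\lambda\int_0^{\tau_0}(\sigma^2-\sigma^1)}]$ turn out to be independent of the starting point $\sigma\in\Delta$, which \emph{restores} the i.i.d.\ property despite $\Delta$ being a set. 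The MGF is then a single rational function of $\lambda$, and differentiating it gives $\mathbb E[\D_i^2]$ and $\mathbb E[\D_i^4]$ in closed form, from which the Paley--Zygmund ratio is seen to be bounded below by $1/12$ uniformly in $r=\alpha/\beta$.

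Second, the time budget. You propose Chebyshev on $\sum(\tau_i-\tau_{i-1})$, which requires controlling $\mathrm{Var}(\tau_1-\tau_0)$ uniformly in $\alpha,\beta$; the paper simply uses Markov's inequality on $\tau_0$ and on $\tau_n-\tau_0$ separately, which only needs the means already computed. Your approach works but is unnecessarily heavy. One caution in your write-up: ``$\sup_\sigma\mathbb E_\sigma[\tau_0]\asymp 1/\alpha+1/\beta$ by finiteness of the chain'' is not a proof---finiteness gives \emph{a} bound, not one uniform in $\alpha,\beta$; you would still need to solve the hitting-time linear system explicitly, as the paper does for its (simpler) $\tau_0$.
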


\begin{proof} Define
  $\tilde{\tau}_n = \inf \{ t \ge \tau_{n-1} : \sigma^1(t) \neq
  \sigma^2(t) \}$ and
  $\tau_n = \inf \{ t \ge \tilde{\tau}_n : \sigma^1(t) = \sigma^2(t)
  \}$ for $n \geq 0$ with the convention that $\tau_{-1} = 0$.
	
  \underline{Step 1.} Solving the adequate system of linear equations,
  as for Lemma \ref{lem_maximum_hitting_time_of_0}, yields
	\begin{align*}
	\mathbb{E}_{(0, 0)}[\tau_0] &= \left( \frac{1}{\alpha} + \frac{1}{\beta} \right) \frac{2 \, r^{2} + 5 \, r + 1}{4 \, r^{2} + 6 \, r + 2},\\
	\mathbb{E}_{(1, 0)} \left[ \tau_0 \right] = \mathbb{E}_{(-1, 0)} \left[ \tau_0 \right] = \mathbb{E}_{(0, 1)} \left[ \tau_0 \right] = \mathbb{E}_{(0, -1)} \left[ \tau_0 \right] &= \left( \frac{1}{\alpha} + \frac{1}{\beta} \right) \frac{4 \, r + 1}{4 \, r^{2} + 6 \, r + 2}, \\
	\mathbb{E}_{(1, 1)} \left[ \tau_0 \right] = \mathbb{E}_{(-1, -1)} \left[ \tau_0 \right] = \mathbb{E}_{(-1, 1)} \left[ \tau_0 \right] = \mathbb{E}_{(1, -1)} \left[ \tau_0 \right] &= \left( \frac{1}{\alpha} + \frac{1}{\beta}\right) \frac{3 \, r + 1}{2 \, r^{2} + 3 \, r + 1},
	\end{align*}
	so that optimizing over $r > 0$ leads to
        $\sup_\sigma \mathbb{E}_\sigma \left[ \tau_0 \right] \le
        1/\alpha + 1/\beta$.
	
	\underline{Step 2.} First step analysis yields that for $n =
        0$ and $\sigma \in \Sigma$
	\begin{align*}
	\mathbb{P}_{\sigma} \left( \sigma(\tau_0) = (0, 0) \right) =  \frac{2 \, \alpha}{2 \, \alpha + \beta} \text{ and }
	\mathbb{P}_{\sigma} \left( \sigma(\tau_0) = (1, 1) \right) = \mathbb{P}_{\sigma} \left( \sigma(\tau_0) = (-1, -1) \right) = \frac{\beta}{2 \, \alpha + \beta}
	\end{align*}
	and a simple iteration yields the same result for $n \ge 1$
        and $\sigma \in \Sigma$. Hence the sequence
        $(\tau_i - \tau_{i-1})_{i \ge 1}$ is identically distributed
        and for all $\sigma \in \Sigma$,
	$$
	\mathbb E_\sigma \left[ \tau_n - \tau_0 \right] = \sum_{i = 1}^n \mathbb E_\sigma [\tau_i - \tau_{i-1}] = n \sum_{s = -1, 0, 1} \mathbb P_\sigma \left( \sigma(\tau_0) = (s, s) \right) \mathbb E_{(s, s)} \left[ \tau_0 \right] = n  \frac{\alpha^{2} + 2 \, \alpha \beta + \beta^{2}}{2 \, \alpha^{2} \beta + \alpha \beta^{2}}.
	$$
		
	\underline{Step 3.} Define
        $\D_i = \int_{\tau_{i-1}}^{\tau_i} (\sigma^2(s) - \sigma^1(s))
        ds$ for $i \ge 1$ as well as
        $S_n = \sum_{i = 1}^n \D_i = \int_{\tau_0}^{\tau_n}
        (\sigma^2(s) - \sigma^1(s)) ds$ for $n \ge 1$. Start by
        computing
        $\mathbb E_\sigma \left[ e^{\lambda \int_0^{\tau_0}
            (\sigma^2(s) - \sigma^1(s)) ds} \right]$ for all
        $\sigma \in \Delta = \{ (\sigma^1, \sigma^2) \in \Sigma:
        \sigma^1 = \sigma^2\}$.
	
	Let $\hat \tau_0 = \inf \{ t \ge 0 : \sigma^1(t) =
        \sigma^2(t)\}$ be the first hitting time of $\Delta$ and
        notice that $\tau_0$ is the first {\it return} time of
        $\Delta$ so that $\tau_0 = \hat \tau_0$ whenever the initial
        state of $(\sigma^1, \sigma^2)$ is outside
        $\Delta$. Furthermore, if one sets
        $k_\sigma = \mathbb E_\sigma \left[ e^{\lambda \int_0^{\hat
              \tau_0} (\sigma^2(s) - \sigma^1(s)) ds} \right]$ then,
        by first step analysis, the vector
        $(k_\sigma)_{\sigma \in \Sigma}$ satisfies the system of
        linear equations
	\begin{align*}
	\sum_{(s^1, s^2) \in \Sigma} q_{(\sigma^1, \sigma^2), (s^1, s^2)} k_{(s^1, s^2)} + \lambda (\sigma^2 - \sigma^1) k_{(\sigma^1, \sigma^2)} = 0 &\text{ for } (\sigma^1, \sigma^2) \notin \Delta, \\
	k_{(\sigma^1, \sigma^2)} = 1 &\text{ for } (\sigma^1, \sigma^2) \in \Delta
	\end{align*}
	where $(q_{\sigma, \sigma'})_{\sigma, \sigma' \in \Sigma}$ is
        the generator of the couple $(\sigma^1, \sigma^2)$.
	
	Solving this system and using
        $\mathbb E_\sigma \left[ e^{\lambda \int_0^{\tau_0}
            (\sigma^2(s) - \sigma^1(s)) ds} \right] = \sum_{\sigma'
          \neq \sigma} -\frac{q_{\sigma, \sigma'}}{q_{\sigma, \sigma}}
        k_{\sigma'}$ for all $\sigma \in \Delta$ yields
	\begin{align*}
	\mathbb E_\sigma \left[ e^{\lambda \int_0^{\tau_0} (\sigma^2(s) - \sigma^1(s)) ds} \right] = \frac{4 \, \alpha^{4} + 4 \, \alpha^{3} \beta + \alpha^{2} \beta^{2} - 2 \, {\left(2 \, \alpha^{2} + 3 \, \alpha \beta + \beta^{2}\right)} \lambda^{2}}{4 \, \alpha^{4} + 4 \, \alpha^{3} \beta + \alpha^{2} \beta^{2} + 4 \, \lambda^{4} - 4 \, {\left(2 \, \alpha^{2} + 3 \, \alpha \beta + \beta^{2}\right)} \lambda^{2}} =: \phi(\lambda) \text{ for all } \sigma \in \Delta
	\end{align*}
	
	so that for all $\sigma = (\sigma^1_0, \sigma^2_0) \in
        \Sigma$,
	\begin{align*}
          \mathbb{E}_\sigma \left[ e^{\sum_{i = 1}^n \lambda_i \D_i} \right] = \mathbb{E}_\sigma \left[ e^{\sum_{i = 1}^{n - 1} \lambda_i \D_i}  \mathbb{E}_{(\sigma^1(\tau_n), \sigma^2(\tau_n))} \left[ e^{\lambda \int_0^{\tau_0} (\sigma^2(s) - \sigma^1(s)) ds} \right] \right] = \mathbb{E}_\sigma \left[ e^{\sum_{i = 1}^{n - 1} \lambda_i \D_i} \right] \phi(\lambda_n),
	\end{align*}
	which can be iterated to obtain
        $\mathbb{E}_\sigma \left[ e^{\sum_{i = 1}^n \lambda_i \D_i}
        \right] = \prod_{i = 1}^{n} \phi(\lambda_i)$.
	
	The $\D_i$ are thus independent and identically distributed and
        differentiating $\phi$ gives us
	$$
	\mathbb{E}_\sigma \left[ \D_i \right] = \mathbb{E}_\sigma \left[ \D_i^3 \right] = 0, \quad \quad\mathbb{E}_\sigma \left[ \D_i^2 \right] = \frac{4 \, {\left(\alpha + \beta\right)}}{2 \, \alpha^{3} + \alpha^{2} \beta}, \quad \quad \mathbb{E}_\sigma \left[ \D_i^4 \right] = \frac{96 \, {\left(\alpha^{2} + 4 \, \alpha \beta + 2 \, \beta^{2}\right)}}{4 \, \alpha^{6} + 4 \, \alpha^{5} \beta + \alpha^{4} \beta^{2}}.
	$$
	
	\underline{Step 4.} Set
        $n = \left\lceil \alpha^2 \ell^2 \right\rceil$. For all
        $\sigma = (\sigma^1_0, \sigma^2_0) \in K$ one has
	\begin{align*}
	\mathbb{E}_\sigma \left[ S_n^2 \right] = n \mathbb{E}_\sigma \left[ \D_i^2 \right] = \left\lceil \alpha^2 \ell^2 \right\rceil \frac{4 \left( \alpha + \beta \right)}{2 \, \alpha^3 + \alpha^2 \beta} \geq \alpha^2 \ell^2 \frac{4 \left( \alpha + \beta \right)}{2 \, \alpha^3 + \alpha^2 \beta} = \ell^2 \frac{4 \, r + 4}{2 \, r + 1} \geq 2 \ell^2.
	\end{align*}
	
	Hence, by the Paley-Zygmund inequality
	\begin{align*}
	\mathbb{P}_{\sigma} \left( S_n^2 \geq \ell^2 \right) \ge \mathbb{P}_{\sigma} \left ( S_n^2 \geq \frac{1}{2} \mathbb{E}_\sigma \left[ S_n^2 \right] \right) \ge \left( 1 - \frac{1}{2} \right)^2 \frac{\mathbb{E}_\sigma \left[ S_n^2 \right]^2}{\mathbb{E}_\sigma \left[ S_n^4 \right]}.
	\end{align*}
	
	Making use of step 3 one gets
	\begin{align*}
	\frac{\mathbb{E}_\sigma \left[ S_n^2 \right]^2}{\mathbb{E}_\sigma \left[ S_n^4 \right]} &= \frac{n^2 \mathbb{E}_\sigma \left[ \D_i^2 \right]^2}{n \mathbb{E}_\sigma \left[ \D_i^4 \right] + 3 n (n - 1) \mathbb{E}_\sigma \left[ \D_i^2 \right]^2} = \frac{1}{3} \left( 1 + \frac{r^2 + 6 \, r + 3}{n (r + 1)^2} \right)^{-1} \ge \frac{1}{3} \left( 1 + \frac{r^2 + 6 \, r + 3}{(r + 1)^2} \right)^{-1} \ge \frac{1}{12}.
	\end{align*}
	
	Putting it all together, one gets
	\begin{align*}
	\mathbb{P}_\sigma \left( \left| \int_{\tau_0}^{\tau_n} (\sigma^2(s) - \sigma^1(s)) ds \right| \ge \ell \right) \ge \frac{1}{48} \text{ for all } \sigma \in \Sigma.
	\end{align*}
	
	\underline{Step 5.} From step 2, one gets
	\begin{align*}
	\mathbb{E}_\sigma \left[ \tau_n - \tau_0 \right] &= \left\lceil \alpha^2 \ell^2 \right\rceil \frac{\alpha^{2} + 2 \, \alpha \beta + \beta^{2}}{2 \, \alpha^{2} \beta + \alpha \beta^{2}} \le \left(\frac{1}{\alpha} + \frac{1}{\beta} \right) \left( 1 + \alpha^2 \ell^2 \right) \frac{r + 1}{2 \, r + 1} \le \left(\frac{1}{\alpha} + \frac{1}{\beta} \right) \left( 1 + \alpha^2 \ell^2 \right).
	\end{align*}
	
	So, if one sets $t^* = 288 \, \left( 1/\alpha + 1/\beta \right) \left(1 + \alpha^2 \ell^2\right)$ then
	\begin{align*}
	\mathbb{P}_\sigma \left( \sup_{t_1 \le t_2 \le t^*} \left| \int_{t_1}^{t_2} (\sigma^2(s) - \sigma^1(s)) ds \right| \ge \ell \right) &\ge \mathbb{P}_\sigma \left( \left| \int_{\tau_0}^{\tau_n} (\sigma^2(s) - \sigma^1(s)) ds \right| \ge \ell \right) - \mathbb{P}_\sigma \left( \tau_0 \ge 144 \, \left( \frac{1}{\alpha} + \frac{1}{\beta}\right) \right) \\
	&\quad- \mathbb{P}_\sigma \left( \tau_n - \tau_0 \ge 144 \, \left( \frac{1}{\alpha} + \frac{1}{\beta}\right)\left(1 + \alpha^2 \ell^2\right) \right) \\
	&\ge \frac{1}{48} - \frac{1}{144} - \frac{1}{144} = \frac{1}{144}.
	\end{align*}
	using Markov's inequality for the last step.
\end{proof}

The upper bound for the mixing time of the continuous finite tumble process can
now be proven.

\begin{proof}[Upper bound of theorem~\ref{thm:mixing_time} (ii)]
  Let $\epsilon \in (0, 1)$ be given and let $X, \tilde X$ and
  $\tau_1, \tau_2$ be as in definition
  \ref{def:continuous_continuous_coupling}. Set
  $t^* = a \frac{3}{\alpha} + 288 \, k \left(1/\alpha + 1/\beta\right)
  \left(1 + \alpha^2 \ell^2\right)$ for $a > 0$ and $k \in \mathbb N$
  to be fixed later and define
  $\tau_x = \inf \{ t \ge \max\left(\tau_1, \tau_2 \right) : x(t) =
  \tilde x(t) \}$.

  One has, for all $t\ge t^*$, \begin{align*} \sup_\mu \tv{\mu P_t -
      \pi} &\le \sup_ {X_0, \tilde X_0} \mathbb P \left( X(t) \neq
      \tilde X(t) \right) \le \sup_ {X_0, \tilde X_0} \mathbb P \left(
      \tau_x > t^* \right) \\ &\le \sup_ {X_0, \tilde X_0} \Big\{
    \mathbb{P} \left( \tau_x - \max\left (\tau_1, \tau_2 \right) > 288
      k \left(\frac{1}{\alpha} + \frac{1} {\beta}\right) (1 + \alpha^2
      \ell^2) \right) + \mathbb{P} \left( \max \left ( \tau_1, \tau_2
      \right) > a \frac{3}{\alpha}\right) \Big\} \\ &\le
      \left(\frac{143}{144}\right)^k +  \frac{1} {a}, \end{align*} where, in the last
  step, lemma~\ref {lem_crucial_deterministic_lemma} and lemma~\ref
  {lem_variation_lower_bound} were combined and iterated for the first
  term and lemma \ref{lem_mean_speed_coupling_time_slowman_2017} and
  Markov's inequality were used for the second term.
\end{proof}

\section*{Acknowledgments}

All the authors acknowledge the support of the French Agence nationale de la recherche under the grant ANR-20-CE46-0007 (SuSa project). This work has also been (partially) supported by the French Agence nationale de la recherche under the grant ANR-23-CE40-0003 (CONVIVIALITY Project). A.G. has benefited from the support of the Institut Universitaire de France and by a government grant managed by the Agence Nationale de la Recherche under the France 2030 investment plan ANR-23-EXMA-0001. The authors would like to thank Michel Benaïm for Remark~\ref{rem:non_constructive_discrete_continuous_limit}.

\section*{Data availability}
Data sharing not applicable to this article as no datasets were
generated or analysed during the current study.

\section*{Conflict of interest statement}
  
The authors have no competing interests to declare that are relevant
to the content of this article.

\bibliographystyle{alpha}
\bibliography{biblio.bib}
 
\appendix

\section{Invariant probability measure of the continuous process} \label{sec:invariant_probability_of_the_continuous_process}

In \cite{hahn23}, \textcolor{myGreen}{the explicit form of the invariant measure}, as well
as universality classes, were derived for general velocity
processes. For the sake of completeness, the explicit invariant
measure $\pi$ of the continuous process is derived in details for the
particular cases considered here. Note that it was already computed in
\cite{slowman16,slowman17} as the limit of the discrete invariant
measures, an approach made rigorous by proposition
\ref{cor:limit_interchange}.

The key in the search for its invariant measure is the construction of the
continuous process as a PDMP in Section~\ref{secdavis}. This allows us to use
the characterization of the invariant measure in terms of its generator 
$$
\pi \text{ is invariant } \iff \int \mathcal L f d\pi = 0 \text{ for all } f \in D(E).
$$
following from (34.7) and (34.11) of \cite{davis93}.

\subsection{Symmetries of the invariant measure}

The mappings
\begin{align*}
\rho_1 &= \iota^{-1} \circ \left[ (x, \sigma^1, \sigma^2) \mapsto (\ell - x, \sigma^2, \sigma^1)\right] \circ \iota, \\
\rho_2 &= \iota^{-1} \circ \left[ (x, \sigma^1, \sigma^2) \mapsto (\ell - x, -\sigma^1, -\sigma^2)\right] \circ \iota, \\
\rho_3 &= \iota^{-1} \circ \left[ (x, \sigma^1, \sigma^2) \mapsto (x, -\sigma^2, -\sigma^1)\right] \circ \iota
\end{align*}
are such that $f \mapsto f \circ \rho_i$ is a one-to-one mapping of
$D(\mathcal L)$ onto itself and satisfy the relations
$$
\rho_1^2 = \rho^2_2 = \text{id}_E \text{ and } \rho_1 \circ \rho_2 = \rho_2 \circ \rho_1 = \rho_3.
$$

The goal of this section is to show the following proposition, which
proves useful in the search for the explicit form of the invariant
probability.

\begin{prop} \label{prop:symmetries_of_the_invariant_measure} 
	For $i = 1, 2, 3$ one has $\pi = \rho_i \# \pi$.
\end{prop}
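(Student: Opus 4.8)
The plan is to show that each pushforward $\rho_i \# \pi$ is again an invariant probability, and then to conclude by uniqueness. Since the continuous process satisfies a Doeblin minorization, its invariant probability $\pi$ is unique, so it suffices to prove that $\int \mathcal L f \, d(\rho_i \# \pi) = 0$ for every $f \in D(\mathcal L)$. Moreover, because $\rho_3 = \rho_1 \circ \rho_2$ and pushforward is functorial, $\rho_3 \# \pi = \rho_1 \# (\rho_2 \# \pi)$; hence it is enough to treat $i = 1$ and $i = 2$, the case $i = 3$ following at once. By the change-of-variables formula for pushforwards,
$$
\int_{E_D} \mathcal L f \, d(\rho_i \# \pi) = \int_{E_D} (\mathcal L f) \circ \rho_i \, d\pi,
$$
so the heart of the matter is the equivariance identity $(\mathcal L f) \circ \rho_i = \mathcal L (f \circ \rho_i)$ for all $f \in D(\mathcal L)$. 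Granting this, and using that $f \mapsto f \circ \rho_i$ maps $D(\mathcal L)$ onto itself, I would obtain $\int (\mathcal L f)\circ \rho_i \, d\pi = \int \mathcal L(f \circ \rho_i)\, d\pi = 0$ by invariance of $\pi$, which is exactly the desired identity.

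To establish the equivariance I would verify it separately on the bulk and on the boundary, using the description of $\mathcal L$ in Proposition~\ref{prop:generator_continuous_process}. For the transport term: since both $\rho_1$ and $\rho_2$ send $x \mapsto \ell - x$, the chain rule produces a factor $-1$ on $\partial_x$; simultaneously the relevant velocity difference changes sign (under $\rho_1$ because the two particles are swapped, so $\sigma^2 - \sigma^1$ becomes $\sigma^1 - \sigma^2$, and under $\rho_2$ because both velocities are negated), so the two sign changes cancel and $(\sigma^2 - \sigma^1)\partial_x$ is preserved. For the jump term, after reindexing the sum one is reduced to checking that the rate matrix $\mathcal Q$ is invariant under the corresponding relabelling, namely $q_{(\sigma^1,\sigma^2),(s^2,s^1)} = q_{(\sigma^2,\sigma^1),(s^1,s^2)}$ for $\rho_1$ and $q_{(\sigma^1,\sigma^2),(s^1,s^2)} = q_{(-\sigma^1,-\sigma^2),(-s^1,-s^2)}$ for $\rho_2$. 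Both hold because the two velocity chains are independent and identically distributed (giving the swap symmetry) and each single-particle chain is symmetric under the reversal $+1 \leftrightarrow -1$ (giving the negation symmetry), as is evident from the rate diagrams in Figures~\ref{fig:single_speed_transition_rates_slowman_2016} and~\ref{fig:single_speed_transition_rates_slowman_2017}. These same two symmetries of $\mathcal Q$ handle the purely-jump generator at the jamming states, the correct pairing of the boundary sets $\{*^0_\sigma\}$ and $\{*^\ell_\sigma\}$ being ensured by the definition of $\rho_i$ through $\iota$ and the already-noted fact that $f \mapsto f \circ \rho_i$ preserves $D(\mathcal L)$.

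The hard part will be precisely this equivariance bookkeeping on the enlarged Davis state space $E_D$: one must track how $\rho_i$ acts on the jamming labels in $\partial K$ and on the one-sided domains $[0,\ell)$ and $(0,\ell]$, and confirm that the continuity conditions defining $D(\mathcal L)$ are respected, so that $\mathcal L(f \circ \rho_i)$ is genuinely computed by the same formula that yields $(\mathcal L f)\circ \rho_i$ at every type of point (bulk, reentry, jamming). Once the identity $(\mathcal L f)\circ\rho_i = \mathcal L(f\circ\rho_i)$ is checked at each of these points, the uniqueness argument sketched above closes the proof immediately for $i=1,2$, and hence for $i=3$.
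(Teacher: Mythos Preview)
Your proposal is correct and follows essentially the same route as the paper: the paper also deduces $\pi=\rho_i\#\pi$ from the equivariance identity $(\mathcal L f)\circ\rho_i=\mathcal L(f\circ\rho_i)$ (stated as a separate lemma with proof omitted) together with the fact that $f\mapsto f\circ\rho_i$ is a bijection of $D(\mathcal L)$, concluding by uniqueness of the invariant probability. Your write-up adds the reduction to $i=1,2$ via $\rho_3=\rho_1\circ\rho_2$ and fleshes out the equivariance check that the paper skips, but the underlying argument is the same.
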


The proof of the previous proposition rests on the following lemma,
the proof of which is omitted for the sake of brevity.

\begin{lem}
  For $i = 1, 2, 3$ one has
  $(\mathcal L f) \circ \rho_i = \mathcal L( f \circ \rho_i)$ for all
  $f \in D(\mathcal L)$.
\end{lem}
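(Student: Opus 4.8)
The plan is to exploit both the group structure of the symmetries and the additive splitting of $\mathcal L$ into a transport part and a velocity-jump part, reducing the whole statement to two elementary invariances. First I would reduce the three cases to two. Writing $R_i f = f\circ\rho_i$, the relation $\rho_3 = \rho_1\circ\rho_2$ gives $R_3 = R_2 R_1$, so that if $R_1\mathcal L = \mathcal L R_1$ and $R_2\mathcal L = \mathcal L R_2$ then $R_3\mathcal L = R_2 R_1\mathcal L = R_2\mathcal L R_1 = \mathcal L R_2 R_1 = \mathcal L R_3$. Thus only $\rho_1$ and $\rho_2$ need to be treated. The fact, recorded just before the proposition, that each $R_i$ maps $D(\mathcal L)$ bijectively onto itself is used throughout to ensure both sides are well defined.

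Next I would pass to the physical coordinates $E = [0,\ell]\times\Sigma$ through the bijection $\iota$, setting $F = f\circ\iota^{-1}$. By Proposition \ref{prop:generator_continuous_process} (together with Proposition \ref{prop_min_max_form}), in these coordinates the jump term of $\mathcal L$ is exactly the action of the velocity generator on the velocity variable, $\sum_{\sigma'}q_{\sigma,\sigma'}F(x,\sigma')$, while the transport term is $(\sigma^2-\sigma^1)\partial_x F(x,\sigma)$. Since $\rho_i = \iota^{-1}\circ r_i\circ\iota$ with $r_1(x,\sigma^1,\sigma^2) = (\ell - x,\sigma^2,\sigma^1)$ and $r_2(x,\sigma^1,\sigma^2) = (\ell - x,-\sigma^1,-\sigma^2)$, the operator $R_i$ corresponds to the precomposition $F\mapsto F\circ r_i$. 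The identity $R_i\mathcal L = \mathcal L R_i$ is then equivalent to the commutation of $F\mapsto F\circ r_i$ with each of the two terms separately.

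For the transport term the chain rule gives $\partial_x(F\circ r_i)(x,\sigma) = -(\partial_x F)(r_i(x,\sigma))$ for $i = 1,2$, the minus sign coming from $x\mapsto\ell - x$, while the coefficient transforms as $\sigma^2-\sigma^1\mapsto -(\sigma^2-\sigma^1)$ under both the transposition (since $(\sigma^2,\sigma^1)$ has difference $\sigma^1-\sigma^2$) and the global sign flip; the two sign changes cancel, yielding commutation. For the jump term I would invoke the two structural symmetries of $\mathcal Q$: particle exchangeability gives $q_{\sigma,\sigma'} = q_{\pi\sigma,\pi\sigma'}$ for the transposition $\pi$ (relevant to $\rho_1$), and the invariance of the single-particle rates of figure \ref{fig:single_speed_transition_rates_slowman_2016} (resp.~\ref{fig:single_speed_transition_rates_slowman_2017}) under $\sigma\mapsto-\sigma$ gives $q_{\sigma,\sigma'} = q_{-\sigma,-\sigma'}$ (relevant to $\rho_2$); either identity directly produces $\mathcal Q(F\circ r_i) = (\mathcal QF)\circ r_i$ in the velocity variable.

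The main obstacle is not these computations but the consistency of the jamming bookkeeping under $\rho_i$, which the passage through $\iota$ is precisely designed to encapsulate. One must verify that $\rho_i$ sends jamming states to jamming states and entry points to entry points, so that the reduction to $\mathcal Q$ acting on the velocity variable is legitimate on $\partial K$ as well as in the bulk. This follows from the sign behaviour of $\sigma^2-\sigma^1$: for $i = 1,2$ one has $\sigma\in\Sigma_+\iff r_i\sigma\in\Sigma_-$ together with $x=\ell\mapsto x=0$, so that $*^\ell_\sigma$ is mapped to $*^0_{r_i\sigma}$ and conversely, in agreement with the convention that $*^\ell_\cdot$ is indexed by $\Sigma_+$ and $*^0_\cdot$ by $\Sigma_-$; the preservation of the domain conditions of Proposition \ref{prop:generator_continuous_process} is exactly the already-quoted fact that $R_i$ maps $D(\mathcal L)$ onto itself. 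Once these compatibilities are recorded, the term-by-term commutation established above gives $R_i\mathcal L = \mathcal L R_i$ for $i = 1,2$, hence for $i = 3$.
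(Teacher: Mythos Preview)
Your argument is correct and is exactly the direct verification one would expect: reduce to $i=1,2$ via the group relation, pass to the $\iota$-coordinates, split $\mathcal L$ into transport and jump parts, and check each part separately using the sign behaviour of $\sigma^2-\sigma^1$ and the exchange/reflection symmetries of $\mathcal Q$. The paper omits its proof ``for the sake of brevity'', so there is nothing to compare against; your write-up is precisely the routine computation that was suppressed.
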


\begin{proof}[Proposition~\ref{prop:symmetries_of_the_invariant_measure}] Because
  $f \mapsto f \circ \rho_i$ is a one-to-one mapping of
  $D(\mathcal L)$ onto itself, one has that
  $\int \mathcal L f d\pi = 0$ for all $f \in D(\mathcal L)$ implies
$$
0 = \int \mathcal L( f \circ \rho_i) d\pi = \int (\mathcal L f) \circ \rho_i d\pi = \int \mathcal L f d(\rho_i\#\pi) \text{ for all } f \in D(\mathcal{L})
$$
so that $\rho_i\#\pi$ is also an invariant probability measure. The
uniqueness of $\pi$ implies $\pi = \rho_i \# \pi$.
\end{proof}

One has shown that the action of the group
$G = \{\text{id}_E, \rho_1, \rho_2, \rho_3 \} \cong (\mathbb{Z} / 2
\mathbb{Z})^2$ commutes with the generator. Denoting
$p_G : E \rightarrow E / G$ the projection operator, this is a strong
indication that $p_G(X_D(t))$ is also a Markov process which can be
identified as the process in \cite{hahn23}.

\subsection{Invariant measure of the continuous process}


Introduce the following notation for convenience
\begin{align*}
\kappa = \sqrt{(\alpha+\beta)(2\alpha+\beta)/2}, \quad \quad r = \alpha/\beta, \quad \quad \tilde r = \kappa/\beta = \sqrt{(r + 1)(2r + 1)/2}
\end{align*}
as well as
\begin{align*}
\lambda_d &= \frac{1}{r}\left(1 - \frac{\tilde{r}}{ 2\tilde{r} + (2 r + 1)\tanh(\frac{\kappa {\ell}}{2})}\right), \\
\lambda_a &= \frac{\kappa}{4\tilde{r}}\left(1 - \frac{1}{ 2r+ 2 +  2\tilde r\tanh(\frac{\kappa {\ell}}{2})}\right), \\
\lambda_b &= \frac{\kappa}{4 (r+1) \cosh\left(\frac{\kappa \ell}{2}\right)\left(2\tilde r + (2 r+1) \tanh\left(\frac{\kappa \ell}{2}\right)\right)}.
\end{align*}

\newpage

\begin{multicols}{2}

\begin{table}[H]
\centering
\begin{align*}
\begin{array}{|c|c|c|c|}
	\hline
	\sigma & d^0_\sigma & a_\sigma & d^\ell_\sigma \\
	\hline
	(1, 1) & \frac{1}{4(2 + \omega \ell)} & \frac{\omega}{4(2  + \omega\ell)} & \frac{1}{4(2 + \omega \ell)}\\
	(1, -1) & \frac{1}{2(2 + \omega \ell)} & \frac{\omega}{4(2  + \omega\ell)} & 0\\
	(-1, 1) & 0 & \frac{\omega}{4(2  + \omega\ell)} & \frac{1}{2(2 + \omega \ell)}\\
	(-1, -1) & \frac{1}{4(2 + \omega \ell)} & \frac{\omega}{4(2  + \omega\ell)} & \frac{1}{4(2 + \omega \ell)}\\
	\hline
\end{array}
\end{align*}
\caption{Instantaneous tumble}
\label{tab_coeffs_invariant_measure_slowman_2016}
\end{table}

\begin{table}[H]
\centering
\begin{align*}
\begin{array}{|c|c|c|c|c|c|}
\hline
\sigma & d^0_\sigma & d^\ell_\sigma & a_\sigma & b^\text{c}_\sigma & b^\text{s}_\sigma \\
\hline
(1, 1)	& \frac{1}{4r}	& \frac{1}{4r}	& \lambda_a			& \frac{2 r+1}{r}\lambda_b	& 0						\\
(1, 0)	& 1				& 0				& 2r \lambda_a		& (4 r+2)\lambda_b			& 2\tilde r	\lambda_b	\\
(1, -1)	& \lambda_d		& 0				& \lambda_a			& -(2 r+1)\lambda_b			& -2\tilde r\lambda_b	\\
(0, 1)	& 0				& 1				& 2r \lambda_a		& (4r+2)\lambda_b			& -2\tilde r\lambda_b	\\
(0, 0)	& r				& r				& 4r^2\lambda_a		& 4 r (2 r+1)\lambda_b		& 0						\\
(0, -1)	& 1				& 0				& 2r \lambda_a		& (4 r+2)\lambda_b			& 2\tilde r\lambda_b	\\
(-1, 1)	& 0				& \lambda_d		& \lambda_a			& -(2r+1)\lambda_b			& 2\tilde r\lambda_b	\\
(-1, 0)	& 0				& 1				& 2r \lambda_a		& (4r+2)\lambda_b			& -2\tilde r\lambda_b	\\
(-1, -1)& \frac{1}{4r}	& \frac{1}{4r}	& \lambda_a			& \frac{2 r+1}{r}\lambda_b	& 0\\
\hline
\end{array}
	\end{align*}
\caption{Finite tumble}
\label{tab_coeffs_invariant_measure_slowman_2017}
\end{table}

\end{multicols}

\begin{prop} \label{prop_invariant_measure_slowman_2016} The invariant
  probability $\pi$ of the continuous instantaneous tumble process is
  given by
	\begin{align*}
	\pi = \iota^{-1} \# \left(\sum_{\sigma \in \Sigma} \left( d^0_\sigma \delta_0 + a_\sigma dx + d^\ell_\sigma \delta_\ell \right) \otimes \delta_\sigma\right)
	\end{align*}
	with the coefficients summarized in table \ref{tab_coeffs_invariant_measure_slowman_2016}.
\end{prop}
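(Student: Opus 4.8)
The plan is to verify directly that the tabulated measure satisfies the weak stationarity condition $\int \mathcal L f \, d\pi = 0$ for every $f \in D(\mathcal L)$, using the generator of Proposition~\ref{prop:generator_continuous_process}. I would first fix the general ansatz $\pi = \iota^{-1}\#\sum_{\sigma\in\Sigma}(d^0_\sigma\delta_0 + a_\sigma(x)\,dx + d^\ell_\sigma\delta_\ell)\otimes\delta_\sigma$ with unknown bulk densities $a_\sigma(\cdot)$ on $(0,\ell)$ and unknown boundary masses $d^0_\sigma, d^\ell_\sigma$ carried by the jamming states $*^0_\sigma$ (resp.~$*^\ell_\sigma$), and then split $\int\mathcal L f\,d\pi$ into a bulk contribution over $\cup_{\sigma}E_\sigma^0$ and a boundary contribution over the jamming singletons.

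For the bulk, $\mathcal L f(x,\sigma) = (\sigma^2-\sigma^1)\partial_x f(x,\sigma) + \sum_{\sigma'}q_{\sigma,\sigma'}f(x,\sigma')$. Integrating the transport term by parts against $a_\sigma(x)\,dx$ produces an interior integral involving $a_\sigma'(x)$ together with boundary evaluations at $x=0$ and $x=\ell$, while the jump term rearranges by Fubini into $\sum_{\sigma'}\bigl(\sum_\sigma q_{\sigma',\sigma}a_{\sigma'}(x)\bigr)\int_0^\ell f(x,\sigma)\,dx$. Requiring the interior parts to cancel for arbitrary $f$ forces the adjoint stationary equation $-\partial_x[(\sigma^2-\sigma^1)a_\sigma] + \sum_{\sigma'} q_{\sigma',\sigma}a_{\sigma'} = 0$ for each $\sigma$; since $\mathcal Q$ is symmetric its column sums $\sum_{\sigma'}q_{\sigma',\sigma}$ vanish, so constant densities $a_\sigma \equiv a$ solve the system, consistent with the single value $a_\sigma=\omega/(4(2+\omega\ell))$ in Table~\ref{tab_coeffs_invariant_measure_slowman_2016}. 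What survives from the bulk is then only the transport boundary term $a\sum_\sigma(\sigma^2-\sigma^1)[f(\ell,\sigma)-f(0,\sigma)]$, which is nonzero solely for $\sigma=(-1,1)$ and $\sigma=(1,-1)$.

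It remains to balance these surviving terms against the Dirac contributions. On a jamming state the generator is purely the jump term, so $\mathcal L f(0,*^0_\sigma) = \sum_{\sigma'}q_{\sigma,\sigma'}(f(\cdot) - f(0,*^0_\sigma))$, where the post-jump argument is whichever of $(0,\sigma')$ or $(0,*^0_{\sigma'})$ lies in $E_D$, and symmetrically at $x=\ell$. I would expand $\sum_\sigma d^0_\sigma\,\mathcal L f(0,*^0_\sigma) + \sum_\sigma d^\ell_\sigma\,\mathcal L f(\ell,*^\ell_\sigma)$ and then use the domain gluing conditions of Proposition~\ref{prop:generator_continuous_process} — namely $\lim_{x\to\ell}f(x,\sigma)=f(\ell,*^\ell_\sigma)$ for $\sigma\in\Sigma_+\setminus\Sigma_-$ and $\lim_{x\to0}f(x,\sigma)=f(0,*^0_\sigma)$ for $\sigma\in\Sigma_-\setminus\Sigma_+$ — to rewrite every term using the free values $f(x,\sigma)$, $f(0,*^0_\sigma)$ and $f(\ell,*^\ell_\sigma)$. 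Collecting the coefficient of each independent $f$-value and setting it to zero yields a finite linear system relating the masses $d^0_\sigma,d^\ell_\sigma$ to $a$; I would then check that the entries of Table~\ref{tab_coeffs_invariant_measure_slowman_2016} solve it and that the total mass equals $(\omega\ell+2)/(2+\omega\ell)=1$. The number of unknowns can be cut beforehand by invoking $\pi=\rho_i\#\pi$ from Proposition~\ref{prop:symmetries_of_the_invariant_measure}, which forces $d^0_{(1,1)}=d^\ell_{(1,1)}=d^0_{(-1,-1)}=d^\ell_{(-1,-1)}$ and pairs $(1,-1)$ with $(-1,1)$.

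The main obstacle is the boundary bookkeeping: one must track precisely which state the process lands in after a velocity flip at a jamming configuration and ensure that the flux balance encoded by the vanishing of each $f$-coefficient matches the transport boundary terms left over from the bulk integration by parts. This is exactly the point where the continuity conditions defining $D(\mathcal L)$ intervene, and where the distinction between detailed and global balance at the boundary (the two universality classes of~\cite{hahn23}) becomes visible; getting the signs and the identification of post-jump states right, rather than the algebra of solving the resulting linear system, is where the real care is needed.
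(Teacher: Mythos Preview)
Your proposal is correct and follows essentially the same route as the paper: both rest on verifying $\int\mathcal L f\,d\pi=0$ by separating the bulk contribution (yielding the adjoint stationary system, whose constant solution you identify directly from the symmetry of $\mathcal Q$) from the boundary contribution (yielding a linear system for the Dirac masses). The paper organizes the computation slightly differently---using compactly supported test functions for the bulk, then affine test functions plus an ergodicity argument to pin down the boundary masses---but the underlying mechanism is identical.
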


\begin{prop} \label{prop_invariant_measure_slowman_2017}
	The \underline{unnormalized} invariant measure $\pi$ of the continuous finite tumble process is given by
	$$
	\pi = \iota^{-1} \# \left( \sum_{\sigma \in \Sigma} \left(d^0_\sigma \delta_0 + d^\ell_\sigma \delta_\ell + \left(a_\sigma  + b^\text{s}_\sigma \sinh\left(\kappa\left(x - \frac{\ell}{2}\right)\right) + b^\text{c}_\sigma \cosh \left( \kappa \left(x - \frac{\ell}{2}\right)\right)\right) dx \right) \otimes \delta_\sigma\right)
	$$
	with the coefficients summarized in table \ref{tab_coeffs_invariant_measure_slowman_2017}.
\end{prop}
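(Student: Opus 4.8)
The plan is to establish invariance through the generator characterization $\int \mathcal{L} f\, d\pi = 0$ for all $f \in D(\mathcal{L})$, which by (34.7) and (34.11) of \cite{davis93} is equivalent to $\pi$ being invariant. Writing $g_\sigma(x) = a_\sigma + b^{\text{s}}_\sigma \sinh(\kappa(x-\ell/2)) + b^{\text{c}}_\sigma \cosh(\kappa(x-\ell/2))$ for the conjectured bulk density and $d^0_\sigma, d^\ell_\sigma$ for the boundary atoms, I would split $\int \mathcal{L} f\, d\pi$ into a bulk integral against $\sum_\sigma g_\sigma(x)\,dx \otimes \delta_\sigma$ and the finitely many boundary-state contributions carried by the Diracs. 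Using the generator of Proposition~\ref{prop:generator_continuous_process}, the bulk integrand is $(\sigma^2-\sigma^1)\partial_x f(x,\sigma) + \sum_{\sigma'} q_{\sigma\sigma'} f(x,\sigma')$; integrating the transport part by parts on $(0,\ell)$ transfers the derivative onto $g_\sigma$ and produces boundary terms at $x=0$ and $x=\ell$.

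Collecting the coefficient of $f(x,\sigma)$ in the interior yields the stationary forward system $-(\sigma^2-\sigma^1) g_\sigma'(x) + \sum_{\sigma'} q_{\sigma'\sigma} g_{\sigma'}(x) = 0$ on $(0,\ell)$. This is a linear constant-coefficient first-order system: for the three velocities with $\sigma^1 = \sigma^2$ the transport vanishes and the relation is purely algebraic, while the remaining components satisfy coupled first-order ODEs. Diagonalising the associated matrix shows its only characteristic roots are $0$ and $\pm\kappa$ with $\kappa = \sqrt{(\alpha+\beta)(2\alpha+\beta)/2}$, which forces the $a_\sigma + b^{\text{s}}_\sigma \sinh + b^{\text{c}}_\sigma \cosh$ ansatz and fixes the relations among $a_\sigma, b^{\text{s}}_\sigma, b^{\text{c}}_\sigma$. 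I would then invoke Proposition~\ref{prop:symmetries_of_the_invariant_measure}: the invariance $\pi = \rho_i \# \pi$ under $G \cong (\mathbb{Z}/2\mathbb{Z})^2$ identifies coefficients across velocity configurations (e.g. $\rho_1$ exchanges $x \leftrightarrow \ell - x$ and $\sigma^1 \leftrightarrow \sigma^2$), cutting the free parameters down to a handful.

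The boundary data is fixed by the vanishing of every term supported at $x \in \{0,\ell\}$. The domain conditions of Proposition~\ref{prop:generator_continuous_process} let me identify each boundary value $f(0,*^0_\sigma)$, $f(\ell,*^\ell_\sigma)$ with the corresponding one-sided limit of $f(\cdot,\sigma)$, so that the integration-by-parts boundary terms combine with the purely-jump generator evaluated against the Diracs. Since $f$ and its boundary limits are otherwise free, the coefficient of each independent boundary value must vanish, producing flux-balance equations relating $d^0_\sigma, d^\ell_\sigma$ to $g_\sigma(0), g_\sigma(\ell)$ and to the incoming jump rates. Substituting into this reduced linear system and unwinding the definitions of $\lambda_a, \lambda_b, \lambda_d$ should reproduce exactly the entries of Table~\ref{tab_coeffs_invariant_measure_slowman_2017}; working on $E_D$ and pushing forward by $\iota^{-1}$ only relabels the boundary atoms. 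As a shortcut one may instead verify directly that the stated $\pi$ annihilates $\int \mathcal{L} f\, d\pi$, sidestepping the rederivation of the ansatz.

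The main obstacle is the boundary bookkeeping: correctly matching the integration-by-parts terms to the jump contributions carried by the sticky states $*^0_\sigma, *^\ell_\sigma$, and tracking which of $(x,\sigma), (x,*^0_\sigma), (x,*^\ell_\sigma)$ actually lies in $E_D$ for each $\sigma$ (governed by the sign of $\sigma^2 - \sigma^1$). Once these flux conditions are written correctly, the interior ODE is routine and the symmetry reduction makes the final linear algebra tractable; the delicate point is ensuring the boundary equations are consistent and determine the atoms $d^0_\sigma, d^\ell_\sigma$ uniquely up to the overall unnormalised scale.
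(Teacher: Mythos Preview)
Your outline matches the paper's proof: test the generator against functions supported in the bulk to extract the forward ODE system, solve it, reduce free constants via the symmetries of Proposition~\ref{prop:symmetries_of_the_invariant_measure}, and then use boundary test functions to pin down the Dirac masses and the remaining constants. One small correction: the zero eigenvalue of the bulk system carries a Jordan block, so the general solution also contains a linear-in-$x$ mode (the paper's $c_4$ term) that is eliminated by the $\rho_1$-symmetry rather than by the spectral analysis alone; the paper additionally invokes ergodicity to see directly that $d^0_\sigma=0$ at entry-non-exit points, though your flux-balance equations recover the same conclusion.
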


Let first establish
and solve a system ODEs for the density of the invariant measure in
the bulk. First no boundary conditions are taken into account when
solving the system so the solution depends on constants that remain to
be fixed. One then looks at the boundary conditions to determine these
constants. The proof of proposition
\ref{prop_invariant_measure_slowman_2016} is very similar to the proof
of proposition \ref{prop_invariant_measure_slowman_2017} so is not
included here.

\begin{proof}[Proposition~\ref{prop_invariant_measure_slowman_2017}] \underline{System of ODEs in the
   bulk.} For $\sigma \in \Sigma$ let $\pi_\sigma$ be the restriction of
   $\pi$ to $(0, \ell) \times \{ \sigma \}$ seen as measure on $
   (0, \ell)$. For arbitrary but fixed $f_\sigma \in C_c^\infty(
   (0, \ell))$ one has that $f : E_D \rightarrow \mathbb{R}$ defined by $$ f
   (x, \sigma) = f_\sigma(x) \text{ for } (x, \sigma) \in
   (0, \ell)\times \Sigma \text{ and } f(x, \nu) = 0 \text{ otherwise} $$ is
   in the domain $D(\mathcal L)$ of the generator. Rewriting $\int \mathcal L
   f d\pi = 0$ in terms of $f_\sigma$ and $\pi_\sigma$ and using the fact
   that the $f_\sigma$ are arbitrary, one gets $-V F' + \mathcal Q^t F = 0$
   where \begin{align*} F = \begin{pmatrix} \pi_{(1, 1)} \\ \pi_{
   (1, 0)} \\ \pi_{(1, -1)} \\ \pi_{(0, 1)} \\ \pi_{(0, 0)} \\ \pi_{
   (0, -1)} \\ \pi_{(-1, 1)} \\ \pi_{(-1, 0)} \\ \pi_{(-1, -1)} \end
   {pmatrix} \text{, } \mathcal Q = \begin{pmatrix} -2 \alpha  & \alpha  &
   0 & \alpha  & 0 & 0 & 0 & 0 & 0 \\ \frac{\beta }
   {2} & -\alpha -\beta  & \frac{\beta }{2} & 0 & \alpha  & 0 & 0 & 0 & 0 \\
   0 & \alpha  & -2 \alpha  & 0 & 0 & \alpha  & 0 & 0 & 0 \\ \frac{\beta }
   {2} & 0 & 0 & -\alpha -\beta  & \alpha  & 0 & \frac{\beta }{2} & 0 & 0 \\
   0 & \frac{\beta }{2} & 0 & \frac{\beta }{2} & -2 \beta  & \frac{\beta }
   {2} & 0 & \frac{\beta }{2} & 0 \\ 0 & 0 & \frac{\beta }
   {2} & 0 & \alpha & -\alpha -\beta  & 0 & 0 & \frac{\beta }{2} \\ 0 & 0 &
   0 & \alpha  & 0 & 0 & -2 \alpha  & \alpha  & 0 \\ 0 & 0 & 0 & 0 & \alpha &
   0 & \frac{\beta }{2} & -\alpha -\beta  & \frac{\beta }{2} \\ 0 & 0 & 0 &
   0 & 0 & \alpha  & 0 & \alpha  & -2 \alpha \end{pmatrix}, \end{align*} and
   $V = \text{Diagonal}\left(0, -1, -2, 1, 0, -1, 2, 1, 0\right)$. Note that
   differentiation is used in distributional sense here.
	
	This implies
	\begin{align*}
	\pi_{(1, 1)} + \pi_{(-1, -1)} = \frac{1}{4r} \left( \pi_{(1, 0)}  + \pi_{(0, 1)}  + \pi_{(-1, 0)}  + \pi_{(0, -1)}\right), \quad \quad \pi_{(0, 0)} = \frac{r}{2} \left( \pi_{(1, 0)}  + \pi_{(0, 1)}  + \pi_{(-1, 0)}  + \pi_{(0, -1)}\right),
	\end{align*}
	and $G' = A G$ in the distributional sense where
	\begin{align*}
	G= \begin{pmatrix}
	\pi_{(-1, 1)} \\ \pi_{(-1, 0)} + \pi_{(0, 1)} \\ \pi_{(1, 0)} + \pi_{(0, -1)} \\ \pi_{(1, -1)}
	\end{pmatrix} \text{ and } A = \begin{pmatrix}
	-\alpha  & \frac{\beta }{4} & 0 & 0 \\
	2 \alpha  & -\frac{\alpha }{2}-\frac{3 \beta }{4} & \frac{\alpha }{2}+\frac{\beta }{4} & 0 \\
	0 & -\frac{\alpha }{2}-\frac{\beta }{4} & \frac{\alpha }{2}+\frac{3 \beta }{4} & -2 \alpha  \\
	0 & 0 & -\frac{\beta }{4} & \alpha
	\end{pmatrix}.
	\end{align*}
	
	Hence
	$$
	G = 
	c_1 
	\begin{pmatrix}
	-2 r+2 {\tilde r}-1 \\
	8 r-4 {\tilde r}+4 \\
	8 r+4 {\tilde r}+4 \\
	-2 r-2 {\tilde r}-1
	\end{pmatrix} e^{\kappa x} dx
	+ c_2 
	\begin{pmatrix}
	-2 r-2 {\tilde r}-1 \\
	8 r+4 {\tilde r}+4 \\
	8 r-4 {\tilde r}+4 \\
	-2 r+2 {\tilde r}-1
	\end{pmatrix} e^{-\kappa x} dx
	+ c_3
	\begin{pmatrix}
	1 \\
	4 r \\
	4 r \\
	1
	\end{pmatrix} dx
	+ c_4
	\left[
	\begin{pmatrix}
	-{2}/{\alpha} \\
	-{4}/{\beta} \\
	{4}/{\beta} \\
	{2}/{\alpha}
	\end{pmatrix}
	+ x
	\begin{pmatrix}
	1 \\
	4 r \\
	4 r \\
	1
	\end{pmatrix}
	\right]
	dx.
	$$

	The symmetry $\pi = \rho_1 \# \pi$ implies
        $\pi_{(-1, 0)} + \pi_{(0, 1)} = \left(x \mapsto \ell -
          x\right)\#\left(\pi_{(1, 0)} + \pi_{(0, -1)}\right)$ which
        forces $c_2 = e^{\kappa \ell} c_1$ and $c_4 = 0$. Furthermore,
        the symmetry $\pi = \rho_3 \# \pi$ implies
	\begin{align*}
	\pi_{(-1, 0)} = \pi_{(0, 1)} &= \frac{1}{2} \left( \pi_{(-1, 0)} + \pi_{(0, 1)} \right), \\
	\pi_{(1, 0)} = \pi_{(0, -1)} &= \frac{1}{2} \left( \pi_{(1, 0)} + \pi_{(0, -1)} \right), \\
	\pi_{(1, 1)} = \pi_{(-1, -1)} &= \frac{1}{8r} \left( \pi_{(-1, 0)} + \pi_{(0, 1)} + \pi_{(1, 0)} + \pi_{(0, -1)}\right),
	\end{align*}
	so that $F = c_1 E_b^+ e^{\kappa x} dx + c_1 E_b^- e^{\kappa(\ell - x)}dx + c_3 E_a dx$ where
	\begin{align*}
	E_b^+ = 
	\begin{pmatrix}
	\frac{2 r+1}{r} \\
	4 r+2 {\tilde r}+2 \\
	-2 r-2 {\tilde r}-1 \\
	4 r-2 {\tilde r}+2 \\
	4 r (2 r+1) \\
	4 r+2 {\tilde r}+2 \\
	-2 r+2 {\tilde r}-1 \\
	4 r-2 {\tilde r}+2 \\
	\frac{2 r+1}{r}
	\end{pmatrix}, \quad \quad
	E_b^- = 
	\begin{pmatrix}
	\frac{2 r+1}{r} \\
	4 r-2 {\tilde r}+2 \\
	-2 r+2 {\tilde r}-1 \\
	4 r+2 {\tilde r}+2 \\
	4 r (2 r+1) \\
	4 r-2 {\tilde r}+2 \\
	-2 r-2 {\tilde r}-1 \\
	4 r+2 {\tilde r}+2 \\
	\frac{2 r+1}{r}
	\end{pmatrix}, \quad \quad
	E_a =
	\begin{pmatrix}
	1 \\ 2r \\ 1 \\ 2r \\ 4r^2 \\ 2r \\ 1 \\ 2r \\ 1
	\end{pmatrix}.
	\end{align*}
	
	\underline{Boundary conditions.} Because the measure is unique, it is ergodic and hence
	$$
	d^0_{(0, 1)} = \lim_{t \rightarrow +\infty} \frac{1}{t}
        \int_0^t 1_{\{ X(s) = (0, 0, 1) \}} ds = 0
	$$
	since the set $ \left\{s \ge 0 : X(s) = (0, 0, 1) \right\}$ is
        almost surely discrete. Similarly
        $d^0_{(-1, 1)} = d^0_{(-1, 0)} = 0$.

	Let $x_\sigma \in \mathbb R$ be arbitrary but fixed for
        $\sigma \in \Sigma$. It is easy to check that
	$$
	f = \iota^{-1} \circ \left[(x, \sigma) \mapsto \left( x_\sigma \frac{\ell -x}{\ell}, \sigma \right)\right]\circ \iota
	$$
	is in $D(\mathcal L)$. Injecting the formula for $\pi$ into
        $\int \mathcal L f d\pi$ and using the fact that the $x_\sigma$
        are arbitrary, one gets
	\begin{equation}
	\mathcal Q^t D^0 - c_1 V E_b^+ - c_1 e^{\kappa\ell} V E_b^- - c_3 V E_a = 0  \tag{$*$}
	\end{equation}
	where
	\begin{align*}
	D^0 = \left(d^0_\sigma\right)_{\sigma \in \Sigma} = \left(d^0_{(1, 1)}, d^0_{(1, 0)}, d^0_{(1, -1)}, 0, d^0_{(0, 0)}, d^0_{(0, -1)}, 0, 0, d^0_{(-1, -1)}\right)^t.
	\end{align*}
	
	The system of linear equations ($*$) uniquely determines $c_1,
        c_3$ and $D^0$ if one imposes $d^0_{(1, 0)} = 1$. Finally, the
        symmetry $\pi = \rho_1 \# \pi$ implies
        $d^\ell_{(\sigma^1, \sigma^2)} = d^0_{(\sigma^2, \sigma^1)}$
        for all $(\sigma^1, \sigma^2) \in \Sigma$.
\end{proof}

\end{document}